\newcommand{\bitem}{\begin{itemize}}
\newcommand{\eitem}{\end{itemize}}
\newcommand{\beq}{\begin{equation}}
\newcommand{\eeq}{\end{equation}}
\newcommand{\cC}{{\cal C}}
\newcommand{\cS}{{\cal S}}
\newcommand{\cR}{{\cal R}}
\newcommand{\cL}{{\cal L}}
\newcommand{\bZ}{{\mathbb Z}}
\newcommand{\bR}{{\mathbb R}}
\newcommand{\bC}{{\mathbb C}}
\newcommand{\bN}{{\mathbb N}}
\newcommand{\ox}{\omega_1}
\newcommand{\oy}{\omega_2}
\newcommand{\CC}{\mathbb{C}}
\newcommand{\Sp}{\mbox{supp }}
\newcommand{\Id}{\mbox{Id}}
  \newcommand{\R}{\mathbb{R}}
 \newcommand{\C}{\mathbb{C}}
 \newcommand{\Z}{\mathbb{Z}}
\newcommand{\ip}[2]{\left\langle#1,#2\right\rangle}
\newtheorem{theorem}{Theorem}[section]
\newtheorem{proposition}{Proposition}[section]
\newtheorem{lemma}{Lemma}[section]
\newtheorem{definition}{Definition}[section]
\newtheorem{measure}{Measure} 
\newcommand{\wl}[1]{{\color{blue}[{#1}]}}
\numberwithin{equation}{section}
\title{Digital Shearlet Transforms}
\author{Gitta Kutyniok, Wang-Q Lim, and Xiaosheng Zhuang}
\date{}
\begin{document}
\maketitle
\abstract{Over the past years, various representation systems which sparsely approximate functions governed by
anisotropic features such as edges in images have been proposed. We exemplarily mention the systems of contourlets,
curvelets, and shearlets. Alongside the theoretical development of these systems, algorithmic realizations of the
associated transforms were provided. However, one of the most common shortcomings of these frameworks is the lack
of providing a unified treatment of the continuum and digital world, i.e., allowing a digital theory to be a
natural digitization of the continuum theory. In fact, shearlet systems are the only systems so far which satisfy
this property, yet still deliver optimally sparse approximations of cartoon-like images. In this chapter, we
provide an introduction to digital shearlet theory with a particular focus on a unified treatment of the
continuum and digital realm. In our survey we will present the implementations of two shearlet transforms, one
based on band-limited shearlets and the other based on compactly supported shearlets. We will moreover
discuss various quantitative measures, which allow an objective comparison with other directional transforms and
an objective tuning of parameters. The codes for both presented transforms as well as the framework for quantifying
performance are provided in the Matlab toolbox \url{ShearLab}.
}

\section{Introduction}

One key property of wavelets, which enabled their success as a universal methodology for signal processing, is the
unified treatment of the continuum and digital world. In fact, the wavelet transform can be implemented by a natural
digitization of the continuum theory, thus providing a theoretical foundation for the digital transform. Lately,
it was observed that wavelets are however suboptimal when sparse approximations of 2D functions are {sought}. The reason
is that these functions are typically governed by anisotropic features such as edges in images or evolving shock
fronts in solutions of transport equations. However, Besov models -- which wavelets optimally encode -- are clearly
deficient to capture these features.
Within the model of cartoon-like images, introduced by Donoho in \cite{Don99}
in 1999, the suboptimal behavior of wavelets for such 2D functions was made mathematically precise; see also
Chapter \cite{SparseApproximation}.

Among the various directional representation systems which have since then been proposed such as contourlets \cite{DV05},
curvelets \cite{CD04}, and shearlets, the shearlet system is in fact the only one which delivers optimally sparse
approximations of cartoon-like images and still also allows for a unified treatment of the continuum and digital world.
One main reason in comparison to the other two mentioned systems is the fact that shearlets are affine systems,
thereby enabling an extensive theoretical framework, but parameterize directions by slope (in contrast to angles)
which greatly supports treating the digital setting. As a thought experiment just note that a shear matrix leaves
the digital grid $\Z^2$ invariant, which is in general not true for rotation.

This raises the following questions, which we will answer in this chapter:
\begin{enumerate}
\item[(P1)] What are the main desiderata for a digital shearlet theory?
\item[(P2)] Which approaches do exist to derive a natural digitization of the continuum shearlet theory?
\item[(P3)] How can we measure the accuracy to which the desiderata from (P1) are matched?
\item[(P4)] Can we even introduce a framework within which different directional transforms can be objectively
compared?
\end{enumerate}

Before delving into a detailed discussion, let us first contemplate about these questions on a more intuitive level.

\subsection{A Unified Framework for the Continuum and Digital World}

Several desiderata come to one's mind, which guarantee a unified framework for both the continuum and digital world,
and provide an answer to (P1). The following are the choices of desiderata which were considered in \cite{KSZ11,DKSZ11}:

\begin{itemize}
\item {\em Parseval Frame Property.} The transform shall ideally have the tight frame property, which enables taking the
adjoint as inverse transform. This property can be broken into the following two parts, which most, but not
all, transforms admit:\\[-0.6cm]
\bitem
\item[$\diamond$] {\em Algebraic Exactness.} The transform should be based on a theory for digital data in the sense that the
analyzing functions should be an exact digitization of the continuum domain analyzing elements.
\item[$\diamond$] {\em Isometry of Pseudo-Polar Fourier Transform.} If the image is first mapped into a different domain -- here
the pseudo-polar domain --, then this map should be an isometry.\\[-0.6cm]
\eitem
\item {\em Space-Frequency-Localization.} The analyzing elements of the associated transform should ideally be
highly localized in space and frequency -- to the extent to which uncertainty principles allow this.
\item {\em Shear Invariance.} Shearing naturally occurs in digital imaging, and it can -- in contrast to rotation --
be precisely realized in the digital domain. Thus the transform should be shear invariant, i.e., a shearing of the input
image should be mirrored in a simple shift of the transform coefficients.
\item {\em Speed.} The transform should admit an al\-go\-ri\-thm of or\-der $O(N^2 \log N)$ flops, where $N^2$ is the number
of digital points of the input image.
\item {\em Geometric Exactness.} The transform should preserve geometric properties parallel to those of the continuum theory,
for example, edges should be mapped to edges in transform domain.
\item {\em {Stability.}} The transform should be resilient against impacts such as (hard) thresholding.
\end{itemize}

\subsection{Band-Limited Versus Compactly Supported Shearlet Transforms}

In general, two different types of shearlet systems are utilized today: Band-limited shearlet systems and compactly supported
shearlet systems (see also Chapters \cite{Introduction} and \cite{SparseApproximation}). Regarding those from an algorithmic
viewpoint, both have their particular advantages and disadvantages:

Algorithmic realizations of the {\em band-limited shearlet transform} have on the one hand typically a
higher computational complexity due to the fact that the windowing takes place in frequency domain. However, on the other hand,
they do allow a high localization in frequency domain which is important, for instance, for handling seismic data. Even more, band-limited
shearlets do admit a precise digitization of the continuum theory.

In contrast to this, algorithmic realizations of the {\em compactly
supported shearlet transform} are much faster and have the advantage of achieving a high accuracy in spatial domain. But for
a precise digitization one has to lower one's sights slightly. A more comprehensive answer to (P2) will be provided in the sequel
of this chapter, where we will present the digital transform based on band-limited shearlets introduced in \cite{KSZ11} and the
digital transform based on compactly supported shearlets from \cite{Lim2010}.

\subsection{Related Work}

Since the introduction of directional representation systems by many pioneer researchers (\cite{CD99,CD04,CD05a,CD05b,DV05}),
various numerical implementations of their directional representation systems have been proposed. Let us next briefly survey
the main features of the two closest to shearlets, which are the contourlet and curvelet algorithms.

\bitem
\item {\em Curvelets} \cite{CDDY06}. The discrete curvelet transform is implemented in the software package {\em CurveLab}, which
comprises two different approaches. One is based on unequispaced FFTs, which are used to interpolate the function in the frequency
domain on different tiles with respect to different orientations of curvelets. The other is based on frequency wrapping, which wraps
each subband indexed by scale and angle into a fixed rectangle around the origin. Both approaches can be realized efficiently in
$O(N^2\log N)$ flops with $N$ being the image size. The disadvantage of this approach is the lack of an associated continuum domain
theory.
\item {\em Contourlets} \cite{DV05}. The implementation of contourlets is based on a directional filter bank, which produces a
directional frequency partitioning similar to the one generated by curvelets. The main advantage of this approach is that it allows
a tree-structured filter bank implementation, in which aliasing due to subsampling is allowed to exist. Consequently, one can
achieve great efficiency in terms of redundancy and good spatial localization. A drawback of this approach is that various
artifacts are introduced and that an associated continuum domain theory is missing.
\eitem
Summarizing, all the above implementations of directional representation systems have their own advantages and disadvantages; one
of the most common shortcomings is the lack of providing a unified treatment of the continuum and digital world.

Besides the shearlet implementations we will present in this chapter, we would like to refer to Chapter
\cite{Applications} for a discussion of the algorithm in \cite{ELL08} based on the Laplacian pyramid scheme and directional
filtering. It should be though noted that this implementation is not focussed on a natural digitization of the continuum theory,
{which is a crucial aspect} of the work presented in the sequel. We further
would like to draw the reader's attention to Chapter \cite{ShearletMRA} which is based on \cite{KS08} aiming at introducing
a shearlet MRA from a subdivision perspective. Finally, we should mention that a different approach to a shearlet MRA was
recently undertaken in \cite{HKZ10}.

\subsection{Framework for Quantifying Performance}

A major problem with many computation-based results in applied mathematics is the non-availability of an accompanying code,
and the lack of a fair and objective comparison with other approaches. The first problem can be overcome by following the
philosophy of `reproducible research' \cite{DMSSU08} and making the code publicly available with sufficient documentation.
In this spirit, the shearlet transforms presented in this chapter are all downloadable from \url{http://www.shearlab.org}.
One approach to overcome the second obstacle is the provision of a carefully selected set of prescribed performance measures
aiming to prohibit a biased comparison on isolated tasks such as denoising and compression of specific standard images like
`Lena', `Barbara', etc. It seems far better from an intellectual viewpoint to carefully decompose performance according to
a more insightful array of tests, each one motivated by a particular well-understood property we are trying to obtain.
In this chapter we will present such a framework for quantifying performance specifically of implementations
of directional transforms, which was originally introduced in \cite{KSZ11,DKSZ11}. We would like to emphasize that such a
framework does not only provide the possibility of a fair and thorough comparison, but also enables the tuning of the parameters
of an algorithm in a rational way, thereby providing an answer to both (P3) and (P4).

\subsection{ShearLab}

Following the philosophy of the previously detailed thoughts, \url{ShearLab}\footnote{ShearLab (Version 1.1) is available from
\url{http://www.shearlab.org}.} was introduced by Donoho, Shahram, and the authors. This software package contains
\bitem
\item An algorithm based on band-limited shearlets introduced in \cite{KSZ11}.
\item An algorithm based on compactly supported {separable} shearlets introduced in \cite{Lim2010}.
\item {An algorithm based on compactly supported nonseparable shearlets introduced in \cite{Lim2011}.}
\item A comprehensive framework for quantifying performance of directional representations in general.
\eitem
This chapter is also devoted to provide an introduction to and discuss the mathematical foundation of these components.

\subsection{Outline}

In Section~\ref{sec:dsh-ppft}, we introduce and analyze the fast digital shearlet transform FDST, which is based on
band-limited shearlets. Section \ref{sec:dst} is then devoted to the presentation and discussion of the digital separable
shearlet transform DSST and the digital non-separable shearlet transform DNST. The framework of performance measures for
parabolic scaling based transforms is provided in Section \ref{sec:framework}. In the same section, we further discuss these measures
 for the special cases of the three previously introduced transforms.

\section{Digital Shearlet Transform using Band-Limited Shearlets}
\label{sec:dsh-ppft}

The first algorithmic realization of a digital shearlet transform we will present, coined {\em Fast
Digital Shearlet Transform (FDST)}, is based on band-limited shearlets.
Let us start by defining the class of shearlet systems we are interested in. Referring to Chapter \cite{Introduction},
we will consider the cone-adapted discrete shearlet system $SH(\phi,\psi,\tilde{\psi};\Delta,\Lambda,\tilde{\Lambda}) =
\Phi(\phi;\Delta) \cup \Psi(\psi;\Lambda) \cup \tilde{\Psi}(\tilde{\psi};\tilde{\Lambda})$ with $\Delta = \Z^2$ and
\[
\Lambda = \tilde{\Lambda} = \{(j,k,m) : j \ge 0,  |k| \leq 2^j, m \in \Z^2\}.
\]
We wish to emphasize that this choice relates to a scaling by $4^j$ yielding an integer valued parabolic scaling matrix,
which is better adapted to the digital setting than a scaling by $2^j$. We further let $\psi$ be a classical shearlet ($\tilde{\psi}$
likewise with $\tilde{\psi}(\xi_1,\xi_2) = \psi(\xi_2,\xi_1)$), i.e.,
\beq \label{eq:psidef}
\hat{\psi}(\xi) = \hat{\psi}(\xi_1,\xi_2) = \hat{\psi}_1(\xi_1) \, \hat{\psi}_2(\tfrac{\xi_2}{\xi_1}),
\eeq
where $\psi_1 \in L^2(\bR)$ is a wavelet with $\hat{\psi}_1 \in C^\infty(\mathbb{R})$ and supp $\hat{\psi}_1 \subseteq [-4,-\frac14]
\cup [\frac14,4]$, and $\psi_2 \in L^2(\mathbb{R})$ a `bump' function satisfying $\hat{\psi}_2 \in C^\infty(\mathbb{R})$ and
supp $\hat{\psi}_2 \subseteq [-1,1]$. We remark that the chosen support deviates slightly from the choice in the introduction,
which is however just a minor adaption again to prepare for the digitization. Further, recall the definition of the cones
$\cC_{11}$ -- $\cC_{22}$ from Chapter \cite{Introduction}.

The digitization of the associated discrete shearlet transform will be performed in the frequency domain. Focussing, on the cone
$\cC_{21}$, say, the discrete shearlet transform is of the form
\begin{equation}\label{def:csht}
f \mapsto \langle f,\psi_\eta\rangle =\langle\hat{f},\hat{\psi}_\eta\rangle
=\Big\langle \hat{f},2^{-j\tfrac32}\hat\psi(S_k^TA_{4^{-j}}\cdot)e^{2\pi i \ip{A_{4^{-j}}S_km}{\cdot}} \Big\rangle,
\end{equation}
where $\eta=(j,k,m,\iota)$ indexes {\em scale} $j$, {\em orientation} $k$, {\em position} $m$, and {\em cone} $\iota$. Considering
this shearlet transform for continuum domain data (taking all cones into account) implicitly induces a trapezoidal tiling of
frequency space which is evidently not cartesian. A digital grid perfectly adapted to this situation is the so-called `pseudo-polar
grid', which we will introduce and discuss subsequently in detail. Let us for now mention that this viewpoint enables representation of the
discrete shearlet transform as a cascade of three steps:

\bitem
\item[1)] Classical Fourier transformation  and change of variables to pseudo-polar coordinates.
\item[2)] Weighting by a radial `density com\-pen\-sa\-tion' factor.
\item[3)] Decomposition into rectangular tiles and inverse Fourier transform of each tiles.
\eitem

Before discussing these steps in detail, let us give an overview of how these steps will be faithfully digitized. First, it will be shown in
Subsection \ref{subsec:dsh-ppft}, that the two operations in Step 1) can be combined to the so-called pseudo-polar Fourier transform.
An oversampling in radial direction of the pseudo-polar grid, on which the pseudo-polar Fourier transform is computed, will then
enable the design of `density-compensation-style' weights on those grid points leading to Steps 1) \& 2) being an isometry. This will
be discussed in Subsection \ref{subsec:weights}. Subsection \ref{subsec:DSHOnPPGrid} is then concerned with the digitization of the
discrete shearlets to subband windows. Notice that a digital analog of \eqref{def:csht} moreover requires an additional 2D-iFFT.
Thus, concluding the digitization of the discrete shearlet transform will cascade the following steps, which is the
exact analogy of the continuum domain shearlet transform \eqref{def:csht}:
\begin{enumerate}
\item[(S1)] PPFT: Pseudo-polar Fourier transform with oversampling factor in the radial direction.
\item[(S2)] Weighting: Multiplication by `density-compensation-style' weights.
\item[(S3)] Windowing: Decomposing the pseudo-polar grid into rectangular subband windows with additional 2D-iFFT.
\end{enumerate}
With a careful choice of the weights and subband windows, this transform is an isometry. Then the inverse transform can be computed
by merely taking the adjoint in each step. A final discussion on the FDST will be presented in Subsection \ref{subsec:FDST}.


\subsection{Pseudo-Polar Fourier Transform}
\label{subsec:dsh-ppft}

We start by discussing Step (S1).

\subsubsection{Pseudo-Polar Grids with Oversampling}
\label{subsubsec:grid}

In \cite{ACDIS08}, a fast pseudo-polar Fourier transform (PPFT) which evaluates the discrete
Fourier transform at points on a trapezoidal grid in frequency space, the so-called pseudo-polar grid, was already developed. However, the
direct use of the PPFT is problematic, since it is -- as defined in \cite{ACDIS08} -- not an isometry. The main obstacle is the
highly nonuniform arrangement of the points on the pseudo-polar grid. This intuitively suggests to downweight points in regions
of very high density by using weights which correspond roughly to the density compensation weights underlying the continuous
change of variables. This will be enabled by a sufficient radial oversampling of the pseudo-polar grid.

This new pseudo-polar grid, which we will denote in the sequel by $\Omega_R$ to indicate the oversampling rate $R$, is defined by
\beq \label{eq:OmegaR}
\Omega_R = \Omega_R^1 \cup \Omega_R^2,
\eeq
where
\begin{eqnarray} \label{eq:OmegaR1}
\Omega_R^1 & = &  \{(-\tfrac{2n}{R}\cdot\tfrac{2\ell}{N},\tfrac{2n}{R}) : -\tfrac{N}{2} \le \ell \le \tfrac{N}{2}, \, -\tfrac{RN}{2} \le n \le \tfrac{RN}{2}\},\\
\label{eq:OmegaR2}
\Omega_R^2 & = &  \{(\tfrac{2n}{R},-\tfrac{2n}{R}\cdot\tfrac{2\ell}{N}) : -\tfrac{N}{2} \le \ell \le \tfrac{N}{2}, \, -\tfrac{RN}{2} \le n \le \tfrac{RN}{2}\}.
\end{eqnarray}
This grid is illustrated in Fig.~\ref{fig:PPgridR}.
\begin{figure}[h]
\begin{center}
\includegraphics[height=1.2in]{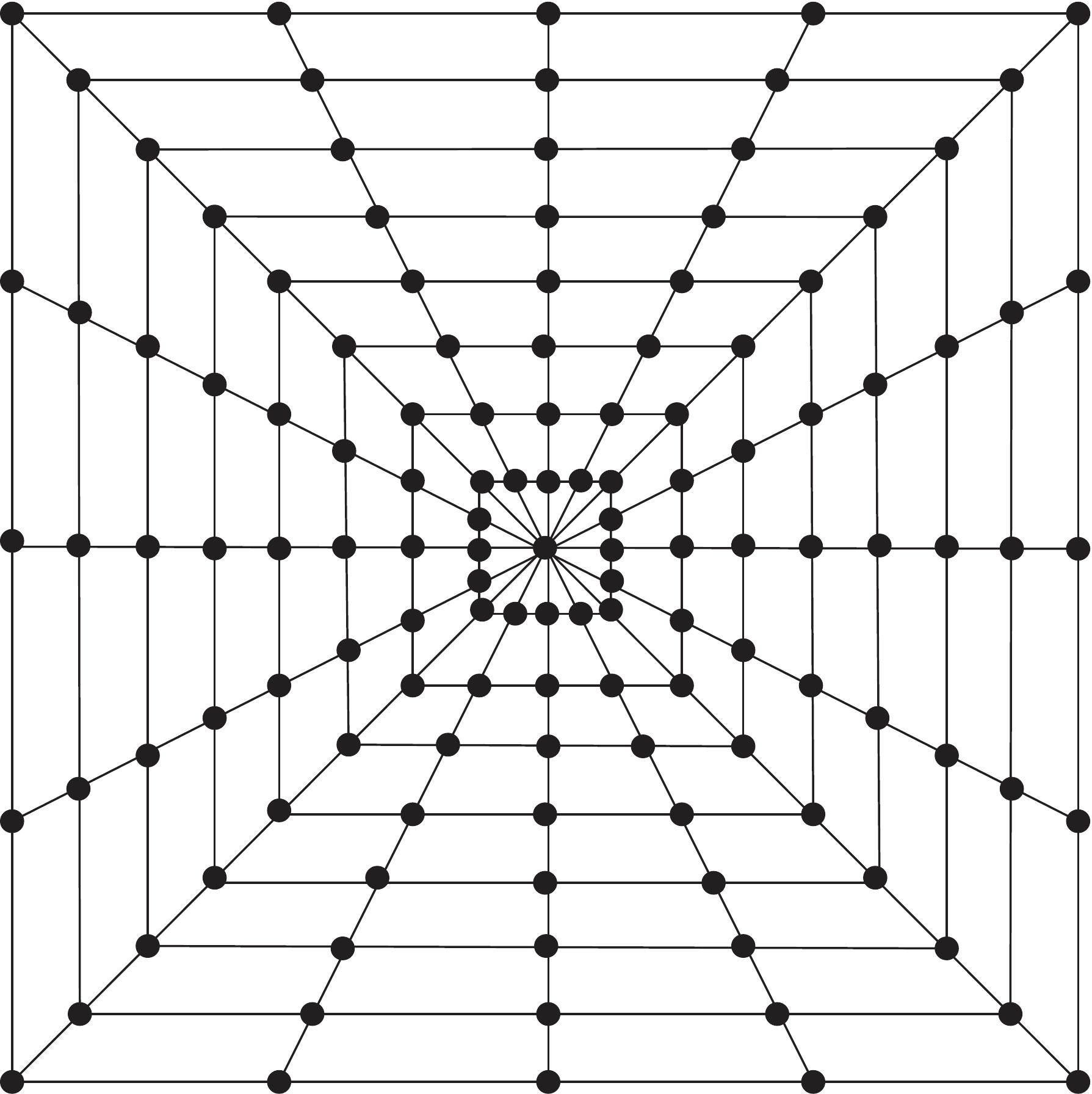}
\put(-48,-17){$\Omega_R$}
\hspace*{1cm}
\includegraphics[height=1.2in]{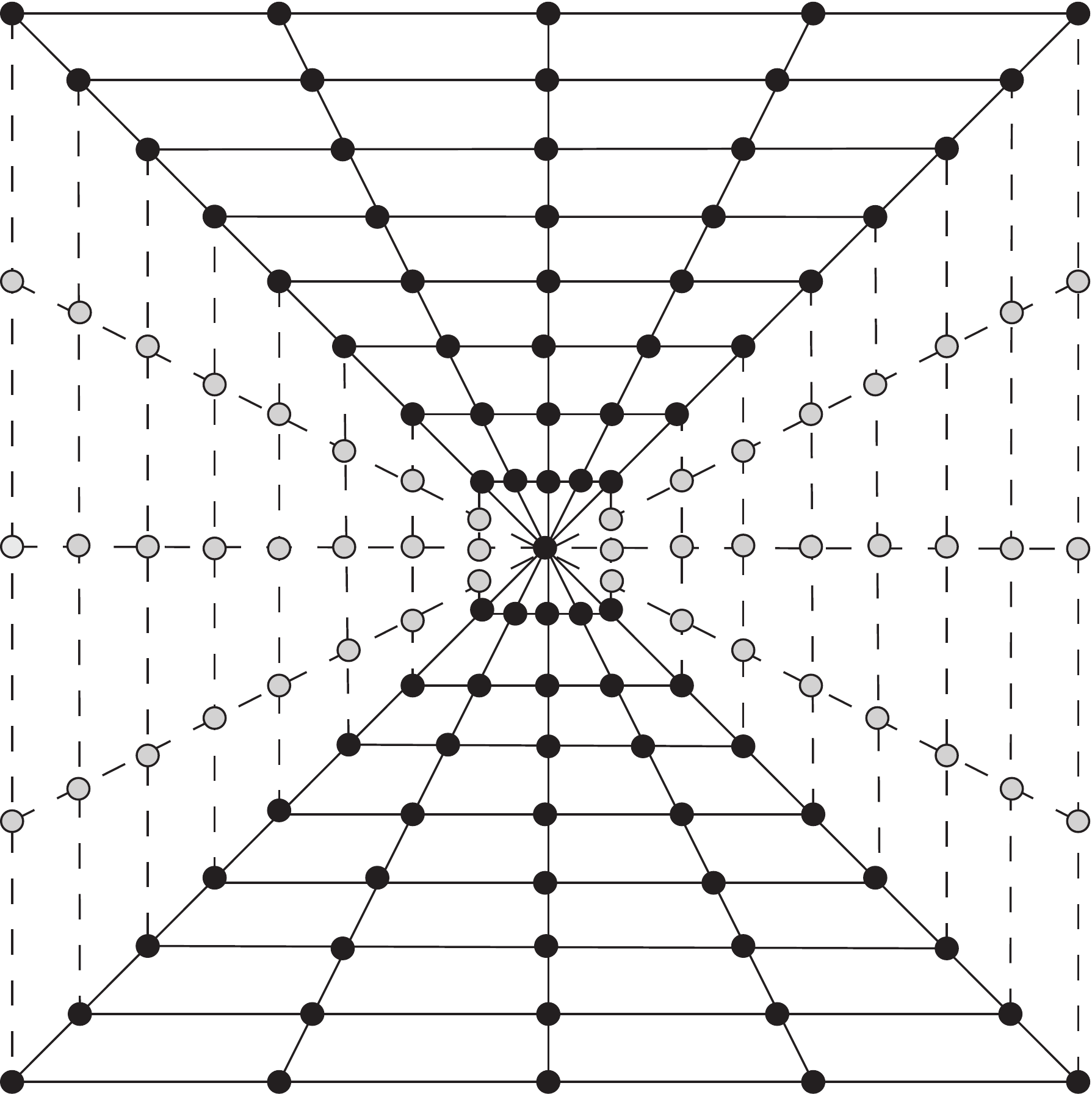}
\put(-48,-17){$\Omega^1_R$}
\hspace*{1cm}
\includegraphics[height=1.2in]{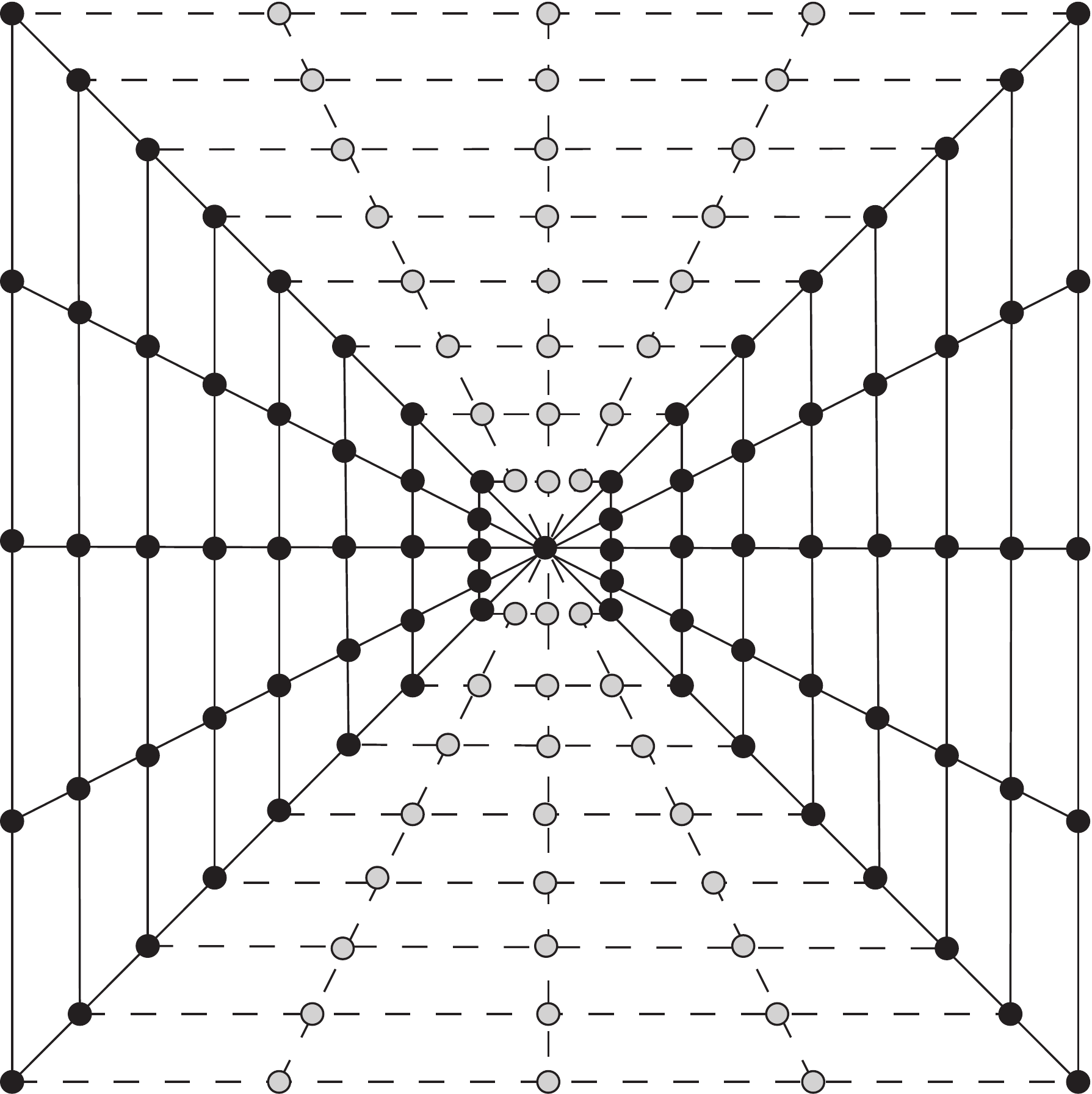}
\put(-48,-17){$\Omega^2_R$}
\end{center}
\caption{The pseudo-polar grid $\Omega_R=\Omega_R^1\cup\Omega_R^2$ for $N=4$ and $R=4$.}
\label{fig:PPgridR}
\end{figure}
We remark that the pseudo-polar grid introduced in \cite{ACDIS08} coincides with $\Omega_R$ for the particular choice $R=2$. It should be
emphasized that $\Omega_R = \Omega_R^1 \cup \Omega_R^2$ is not a disjoint partitioning, nor is the mapping $(n,\ell) \mapsto
(-\tfrac{2n}{R}\cdot\tfrac{2\ell}{N},\tfrac{2n}{R})$ or $(\tfrac{2n}{R},-\tfrac{2n}{R}\cdot\tfrac{2\ell}{N})$ injective.
In fact,  the center
\beq \label{eq:00}
\cC = \{(0,0)\}
\eeq
appears $N+1$ times in $\Omega_R^1$ as well as $\Omega_R^2$, and the points on the seam lines
\begin{eqnarray*}
\cS_R^1 & = &  \{(-\tfrac{2n}{R},\tfrac{2n}{R}) : -\tfrac{RN}{2} \le n \le \tfrac{RN}{2}, \, n \neq 0\},\\
\cS_R^2 & = &  \{(\tfrac{2n}{R},-\tfrac{2n}{R}) : -\tfrac{RN}{2} \le n \le \tfrac{RN}{2}, \, n \neq 0\}.
\end{eqnarray*}
appear in both $\Omega_R^1$ and $\Omega_R^2$.

\begin{definition}
\label{defi:ppft}
Let $N, R$ be positive integer, and let $\Omega_R$ be the pseudo-polar grid given by \eqref{eq:OmegaR}. For an $N\times N$ image
$I:=\{I(u,v) : -\tfrac{N}{2}\le u,v \le \tfrac{N}{2}-1\}$, the {\em pseudo-polar Fourier transform (PFFT) $\hat{I}$} of $I$ evaluated
on $\Omega_R$ is then defined to be
\[
\hat{I}(\ox,\oy) = \sum_{u,v=-N/2}^{N/2-1}I(u,v)e^{-\frac{2\pi i}{m_0}(u\ox+v\oy)}, \quad (\ox,\oy)\in\Omega_R,
\]
where $m_0 \ge N$ is an integer.
\end{definition}

We wish to mention that $m_0 \ge N$ is typically set to be $m_0 = \tfrac{2}{R}(RN+1)$ for computational reasons (see also \cite{ACDIS08}),
but we for now allow more generality.


\subsubsection{Fast PPFT}
\label{subsubsec:forward}

It was shown in \cite{ACDIS08}, that the PPFT can be realized in $O(N^2\log N)$ flops with  $N\times N$ being the size of the input image.
We will now discuss how the extended pseudo-polar Fourier transform as defined in Definition \ref{defi:ppft} can be computed with similar
complexity.

For this, let $I$ be an image of size $N\times N$. Also, $m_0$ is set -- but not restricted -- to be $m_0 = \frac{2}{R}(RN+1)$; we will
elaborate on this choice at the end of this subsection. We now focus on $\Omega_R^1$, and mention that the PPFT on the other cone
can be computed similarly. Rewriting the pseudo-polar Fourier transform from Definition \ref{defi:ppft}, for $(\ox,\oy)
= (-\tfrac{2n}{R}\cdot\tfrac{2\ell}{N},\tfrac{2n}{R}) \in  \Omega_R^1$, we obtain
\begin{eqnarray} \nonumber
\hat{I}(\ox,\oy)
&=&\sum_{u,v=-N/2}^{N/2-1}I(u,v)e^{-\frac{2\pi i}{m_0}(u\ox+v\oy)}
=\sum_{u=-N/2}^{N/2-1}\sum_{v=-N/2}^{N/2-1}I(u,v)e^{-\frac{2\pi i}{m_0}(u\frac{-4n\ell}{RN}+v\frac{2n}{R})}\\ \label{eq:PPFTrewrite}
&=& \sum_{u=-N/2}^{N/2-1}\left(\sum_{v=-N/2}^{N/2-1}I(u,v)e^{-\frac{2\pi i vn}{RN+1}}\right)e^{-{2\pi i u\ell}\cdot \frac{-2n}{(RN+1) \cdot N}}.
\end{eqnarray}
This rewritten form, i.e., \eqref{eq:PPFTrewrite}, suggests that the pseudo-polar Fourier transform $\hat{I}$ of $I$ on $\Omega_R^1$ can be obtained by performing the 1D FFT
on the extension of $I$ along direction $v$ and then applying a fractional Fourier transform (frFT) along direction $u$.
To be more specific, we require the following operations:

{\em Fractional Fourier Transform.} For $c\in\bC^{N+1}$, the {\em (unaliased) discrete
fractional Fourier transform by $\alpha \in \bC$} is defined to be
\[
(F_{N+1}^\alpha c)(k) :=\sum_{j=-N/2}^{N/2} c(j)e^{-2\pi i \cdot j\cdot k \cdot\alpha}, \quad k = -\tfrac{N}{2}, \ldots, \tfrac{N}{2}.
\]
It was shown in \cite{Bailey:frFT}, that the fractional Fourier transform $F_{N+1}^{\alpha}c$ can be computed using
$O(N\log N)$ operations. For the special case of $\alpha = 1/(N+1)$, the fractional Fourier transform becomes the
(unaliased) 1D discrete Fourier Transform (1D FFT), which in the sequel will be denoted by $F_1$. Similarly,
the 2D discrete Fourier Transform (2D FFT) will be denoted by $F_2$, and the inverse of the $F_2$ by $F_2^{-1}$ (2D iFFT).

{\em Padding Operator}. For $N$ even, $m>N$ an odd integer, and $c\in\bC^{N}$, the {\em padding operator} $E_{m,n}$
gives a symmetrically zero padding version of $c$ in the sense that
\[
(E_{m,N}c)(k) =
\begin{cases}
c(k) & k=-\tfrac{N}{2}, \ldots, \tfrac{N}{2}-1,\\
0     & k\in\{-\tfrac{m}{2}, \ldots, \tfrac{m}{2}\}\setminus\{-\tfrac{N}{2}, \ldots, \tfrac{N}{2}-1\}.
\end{cases}
\]

Using these operators, \eqref{eq:PPFTrewrite} can be computed by
\begin{eqnarray*}
\hat{I}(\ox,\oy)
&=& \sum_{u=-N/2}^{N/2-1} F_1 \circ E_{RN+1,N} \circ I(u,n) e^{-{2\pi i u\ell}\cdot \frac{-n}{(RN+1) \cdot N/2}}\\
&=& \sum_{u=-N/2}^{N/2} E_{N+1,N} \circ F_1 \circ E_{RN+1,N} \circ I(u,n) e^{-{2\pi i u\ell}\cdot \frac{-2n}{(RN+1) \cdot N}}\\
&=& (F_{N+1}^{\alpha_n}\tilde I(\cdot,n))(\ell),
\end{eqnarray*}
where $\tilde{I} = E_{N+1,N} \circ F_1 \circ E_{RN+1,N} \circ I \in \bC^{(RN+1)\times (N+1)}$ and $\alpha_n=-\frac{n}{(RN+1)N/2}$.
Since the 1D FFT and 1D frFT require only $O(N\log N)$ operations for a vector of size $N$, the total complexity of this algorithm
for computing the pseudo-polar Fourier transform from Definition \ref{defi:ppft} is indeed $O(N^2\log N)$ for an image of size
$N\times N$.

We would like to also remark that for a different choice of constant $m_0$, one can  compute the pseudo-polar Fourier transform
also with complexity $O(N^2\log N)$ for an image of size $N\times N$. This however requires application of the fractional Fourier
transform in both directions $u$ and $v$ of the image, which results in a larger constant for the computational cost; see also
\cite{Bailey:frFT}.


\subsection{Density-Compensation Weights}
\label{subsec:weights}

Next we tackle Step (S2), which is more delicate than it might seem, since the weights will not be derivable from simple density
compensation arguments.

\subsubsection{A Plancherel Theorem for the PPFT}

For this, we now aim to choose weights $w : \Omega_R \to \bR^+$ so that the extended PPFT from Definition
\ref{defi:ppft} becomes an isometry, i.e.,
\beq \label{eq:ppPlancherel}
\sum_{u, v = -N/2}^{N/2-1} |I(u,v)|^2
= \sum_{(\ox, \oy) \in \Omega_R} \hspace*{-0.3cm} w(\ox,\oy) \cdot |\hat{I}(\ox,\oy)|^2.
\eeq
Observing the symmetry of the pseudo-polar grid, it seems natural to select weight functions $w$ which have full axis symmetry
properties, i.e., for all $(\ox,\oy)\in\Omega_R$, we require
\begin{equation}\label{w:full-axis-symmetry}
w(\ox, \oy) = w(\oy, \ox),\; w(\ox, \oy) = w(-\ox, \oy),\; w(\ox, \oy) = w(\ox, -\oy).
\end{equation}
Then the following `Plancherel theorem' for the pseudo-polar Fourier transform on $\Omega_R$ -- similar to the one for the
Fourier transform  on the cartesian grid -- can be proved.

\begin{theorem}[\cite{KSZ11}]\label{thm:weight}
Let $N$ be even, and let $w : \Omega_R \to \bR^+$ be a weight function satisfying \eqref{w:full-axis-symmetry}. Then
\eqref{eq:ppPlancherel} holds,  if and only if,  the weight function $w$ satisfies
\begin{eqnarray}\nonumber
\delta(u,v) & =  &w(0,0) \\\nonumber &+&4 \cdot \sum_{\ell = 0, N/2} \sum_{n = 1}^{RN/2}
w(\tfrac{2n}{R}, \tfrac{2n}{R}\cdot\tfrac{-2\ell}{N}) \cdot \cos(2\pi  u\cdot \tfrac{2n}{m_0R})\cdot \cos(2\pi v \cdot \tfrac{2n}{m_0R}\cdot\tfrac{2\ell}{N})\\
\label{cond:isometry}
&  +& 8 \cdot \sum_{\ell = 1}^{N/2-1} \sum_{n = 1}^{RN/2}
w(\tfrac{2n}{R}, \tfrac{2n}{R}\cdot\tfrac{-2\ell}{N})  \cdot \cos(2\pi  u\cdot \tfrac{2n}{m_0R})\cdot \cos(2\pi v \cdot \tfrac{2n}{m_0R}\cdot\tfrac{2\ell}{N})
\end{eqnarray}
for all $-N+1 \le u, v \le N-1$.
\end{theorem}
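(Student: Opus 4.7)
The plan is to recast the Plancherel identity \eqref{eq:ppPlancherel} as an equivalent pointwise condition on a kernel $K(s,t)$ on the image-side indices, and then evaluate $K$ using the dihedral symmetry of $w$. Substituting the defining formula for $\hat{I}$ from Definition \ref{defi:ppft} into the right-hand side of \eqref{eq:ppPlancherel}, expanding $|\hat{I}|^2 = \hat{I}\overline{\hat{I}}$, and interchanging the order of summation yields
\[
\sum_{(\ox,\oy)\in\Omega_R} w(\ox,\oy)\,|\hat{I}(\ox,\oy)|^2 = \sum_{u,v,u',v'=-N/2}^{N/2-1} I(u,v)\,\overline{I(u',v')}\,K(u-u',v-v'),
\]
where
\[
K(s,t) := \sum_{(\ox,\oy)\in\Omega_R} w(\ox,\oy)\,e^{-\frac{2\pi i}{m_0}(s\ox+t\oy)}.
\]
Since $\sum_{u,v}|I(u,v)|^2$ is the same bilinear form with the Kronecker kernel $\delta_{s,0}\delta_{t,0}$, and both sides are Hermitian in $I$, the identity \eqref{eq:ppPlancherel} holds for every image $I$ if and only if the pointwise kernel identity $K(s,t)=\delta_{s,0}\delta_{t,0}$ holds for every integer pair $(s,t)$ with $-(N-1)\le s,t\le N-1$.

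Next, I would simplify $K(s,t)$ using the axis symmetries \eqref{w:full-axis-symmetry}. The generators $\ox\mapsto-\ox$, $\oy\mapsto-\oy$, and $(\ox,\oy)\mapsto(\oy,\ox)$ generate the dihedral group $D_4$ of order eight, under which both $w$ and the set $\Omega_R$ are invariant. For any $D_4$-orbit on $\Omega_R$, the weight $w$ is constant, and summing $e^{-2\pi i(s\ox+t\oy)/m_0}$ across the orbit collapses the exponentials into a product of cosines in $s$ and $t$ scaled by the orbit size. Thus $K(s,t)$ becomes a weighted sum, indexed by orbits, of terms of the form $(\text{orbit size})\cdot w(\ox,\oy)\cos(2\pi s\ox/m_0)\cos(2\pi t\oy/m_0)$.

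The principal obstacle is the combinatorial orbit enumeration needed to separate the three multiplicity classes and to reconcile the overlap between $\Omega_R^1$ and $\Omega_R^2$. The origin $\cC$ is the unique fixed point and contributes the pure term $w(0,0)$. Points on the coordinate axes and on the seam-line diagonals $|\ox|=|\oy|\ne 0$ each have a stabilizer of order two in $D_4$, hence orbits of size four; in the parametrization of $\Omega_R^2$ these are precisely the points indexed by $\ell\in\{0,N/2\}$ with $1\le n\le RN/2$, yielding the middle sum in \eqref{cond:isometry} with coefficient $4$. The remaining interior points, parametrized by $1\le\ell\le N/2-1$ and $1\le n\le RN/2$ in $\Omega_R^2$, have trivial stabilizer and full orbits of size eight, giving the last sum with coefficient $8$. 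The delicate checks are to verify that the origin is counted exactly once (despite appearing $N+1$ times in each of $\Omega_R^1$ and $\Omega_R^2$), that each seam-line diagonal point is counted exactly once across the two cones, and that the parameter range $0\le\ell\le N/2$, $1\le n\le RN/2$ supplies exactly one representative per $D_4$-orbit. Once these identifications are pinned down, matching orbit sizes to cosine contributions reproduces \eqref{cond:isometry}, completing the equivalence.
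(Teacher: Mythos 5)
Your proposal follows essentially the same route as the paper's proof: expand $|\hat{I}|^2$ and reduce the Plancherel identity to the kernel condition $K(s,t)=\delta(s,t)$ for $|s|,|t|\le N-1$ (the paper does this by testing on two-point images, you by polarization of the Hermitian form --- the same argument), then pass to cosines via the axis symmetries and collect symmetry classes, and your orbit bookkeeping (origin / axes and diagonals / interior, with multiplicities $1$, $4$, $8$ and one representative per orbit in the range $0\le\ell\le N/2$, $1\le n\le RN/2$) is precisely the step the paper compresses into ``we can deduce that \eqref{eq:isometry1} is equivalent to \eqref{cond:isometry}.'' One caveat: for the axis orbits and the full size-eight orbits the exponential sum over the orbit collapses to $\tfrac{|O|}{2}\bigl[\cos(\tfrac{2\pi u\ox}{m_0})\cos(\tfrac{2\pi v\oy}{m_0})+\cos(\tfrac{2\pi u\oy}{m_0})\cos(\tfrac{2\pi v\ox}{m_0})\bigr]$ rather than a single cosine product times $|O|$ as you assert, so \eqref{cond:isometry} is recovered verbatim only after invoking the $u\leftrightarrow v$ symmetry of the condition over the full index range --- an imprecision that is, however, equally present in the paper's own formula.
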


\begin{proof}
We start by computing the right hand side of \eqref{eq:ppPlancherel}:
\begin{eqnarray*}
\lefteqn{\sum_{(\ox, \oy) \in \Omega_R} w(\ox,\oy) \cdot |\hat{I}(\ox,\oy)|^2}\\
& = & \sum_{(\ox, \oy) \in \Omega_R} w(\ox,\oy)
\cdot \left|\sum_{u, v = -N/2}^{N/2-1}  I(u,v) e^{-\frac{2\pi i}{m_0}(u \ox + v \oy)}\right|^2\\
& = & \sum_{(\ox, \oy) \in \Omega_R} w(\ox,\oy)
\cdot \left[\sum_{u, v = -N/2}^{N/2-1} \sum_{u', v' = -N/2}^{N/2-1} \hspace*{-0.18cm} I(u,v) \overline{I(u',v')}
e^{-\frac{2\pi i}{m_0}((u-u') \ox + (v-v') \oy)}\right]\\
& = & \sum_{(\ox, \oy) \in \Omega_R} w(\ox,\oy) \cdot  \sum_{u, v = -N/2}^{N/2-1} |I(u,v)|^2\\
& & + \sum_{\stackrel{u, v, u', v' = -N/2}{(u,v) \neq (u',v')}}^{N/2-1} I(u,v) \overline{I(u',v')} \cdot
\left[ \sum_{(\ox, \oy) \in \Omega_R} w(\ox,\oy) \cdot e^{-\frac{2\pi i}{m_0}((u-u') \ox + (v-v') \oy)}
\right].
\end{eqnarray*}
Choosing $I=c_{u_1,v_1}\delta{(u-u_1,v-v_1)}+c_{u_2,v_2}\delta{(u-u_2,v-v_2)}$ for all $-N/2\le u_1,$ $v_1,$ $u_2,$ $v_2\le N/2-1$
and for all $c_{u_1,v_1},c_{u_2,v_2}\in\CC$, we can conclude that \eqref{eq:ppPlancherel} holds if and only if
\[
 \sum_{(\ox, \oy) \in \Omega_R} w(\ox,\oy) \cdot e^{-\frac{2\pi i}{m_0}(u\ox + v\oy)} =
  \delta(u,v),\quad -N+1 \le u, v \le N-1.
\]
By the symmetry of the weights \eqref{w:full-axis-symmetry}, this is equivalent to
\begin{equation}\label{eq:isometry1}
\sum_{(\ox, \oy) \in \Omega_R} w(\ox,\oy) \cdot
[\cos(\tfrac{2\pi}{m_0} u \ox)\cos(\tfrac{2\pi}{m_0} v \oy)] = \delta(u,v)
\end{equation}
for all $-N+1\le u,v\le N-1$. From this, we can deduce that \eqref{eq:isometry1} is equivalent to \eqref{cond:isometry},
which proves the theorem.
\qed
\end{proof}

Notice that \eqref{cond:isometry} is a linear system with $RN^2/4+RN/2+1$ unknowns and $(2N-1)^2$ equations, wherefore,
in general, one needs the oversampling factor $R$ to be at least $16$ to enforce solvability.

\subsubsection{Relaxed Form of Weight Functions}
\label{subsubsec:relaxed}

The computation of the weights satisfying Theorem~\ref{thm:weight} by solving the full linear system of equations \eqref{cond:isometry}
is much too complex. Hence, we relax the requirement for exact isometric weighting,
and represent the weights in terms of undercomplete basis functions on the pseudo-polar grid.

More precisely, we first choose a set of basis functions $w_1, \ldots, w_{n_0}:\Omega_R\rightarrow \bR^+$ such that
\[
\sum_{j=1}^{n_0} w_j(\ox,\oy)\neq 0 \quad \mbox{for all } (\ox,\oy)\in\Omega_R.
\]
We then represent weight functions $w:\Omega_R\rightarrow\bR^+$ by
\beq \label{eq:compute_w}
w:=\sum_{j=1}^{n_0} c_jw_j,
\eeq
with $c_1,\ldots,c_{n_0}$ being nonnegative constants. This approach now enables solving \eqref{cond:isometry} for the
constants $c_1,\ldots,c_{n_0}$ using the least squares method, thereby reducing the computational complexity significantly.
The `full' weight function $w$ is then given by \eqref{eq:compute_w}.

We next present two different choices of weights which were derived by this relaxed approach. Notice that $(\ox, \oy)$
and $(n,\ell)$ will be used interchangeably.

{\bf Choice 1}. The set of basis functions $w_1, \ldots, w_5$ is defined as follows:\\
{\em Center}:
\[
w_1 = 1_{(0,0)},
\]
{\em Boundary}:
\[
w_2=1_{\{(\ox, \oy) : |n|=NR/2,\, \ox=\oy\}} \mbox{ and } w_3=1_{\{(\ox, \oy) : |n|=NR/2,\, \ox\neq\oy\}},
\]
{\em Seam lines}:
\[
w_4=|n| \cdot 1_{\{(\ox, \oy) : 1\le|n|<NR/2,\, \ox=\oy\}},
\]
{\em Interior}:
\[
w_5=|n| \cdot 1_{\{(\ox, \oy) : 1\le|n|<NR/2,\,\ox\neq\oy\}}.
\]

{\bf Choice 2}. The set of basis functions $w_1, \ldots, w_{N/2+2}$ is defined as follows:\\
{\em Center}:
\[
w_1 = 1_{(0,0)},
\]
{\em Radial Lines}:
\[
w_{\ell+2}=1_{\{(\ox, \oy) :  1<|n|<NR/2,\, \oy=\frac{\ell}{N/2}\ox\}},\quad \ell=0,1,\ldots,N/2.
\]

The associated weight functions are displayed in Fig.~\ref{fig:001}. In general, suitable weight functions usually obey
the pattern of linearly increasing values along the radial direction. Thus, this is a natural requirement for the basis functions.
\begin{figure}[ht]
\begin{center}
\includegraphics[height=1.2in]{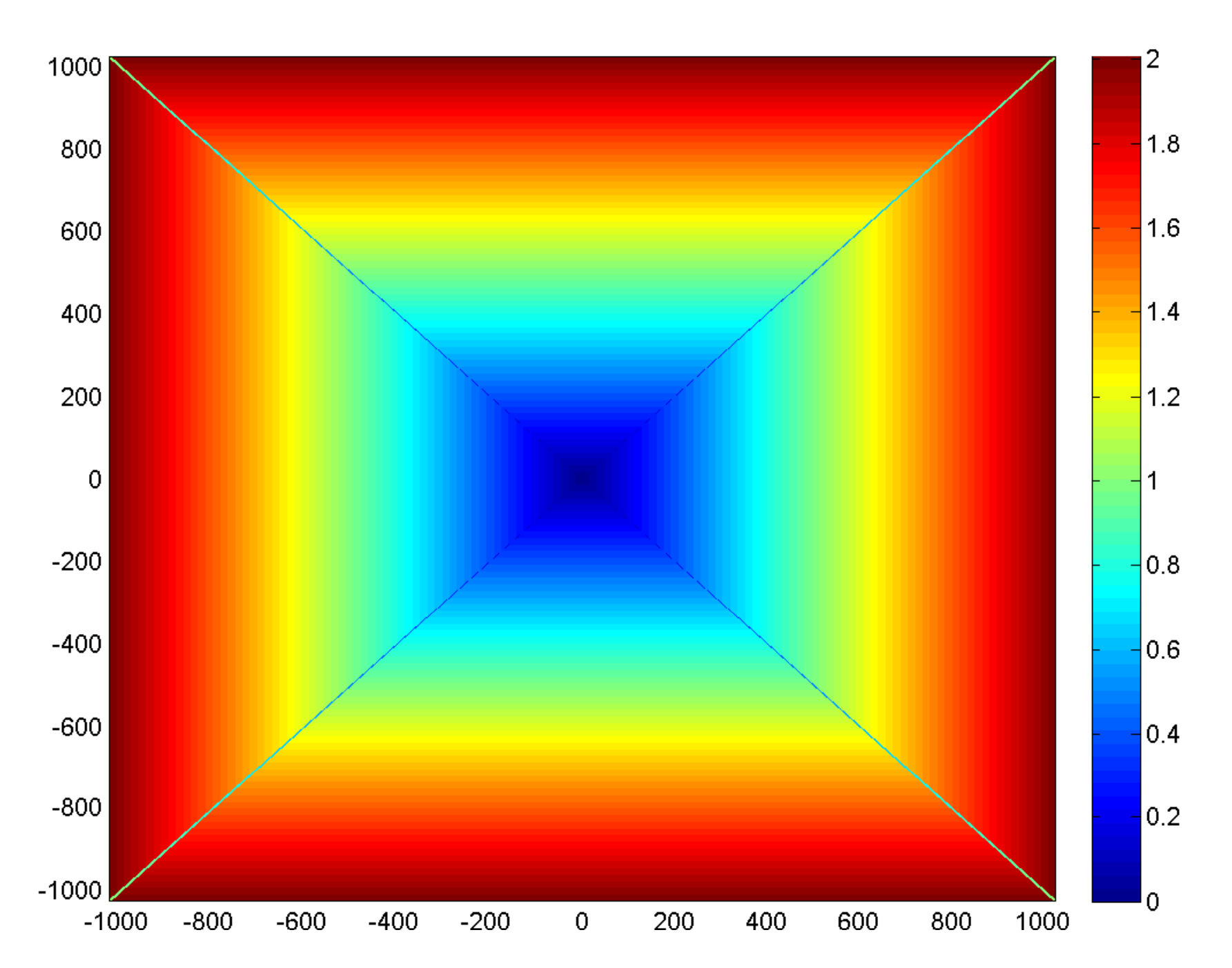}
\put(-70,-17){Choice 1}
\hspace*{1cm}
\includegraphics[height=1.2in]{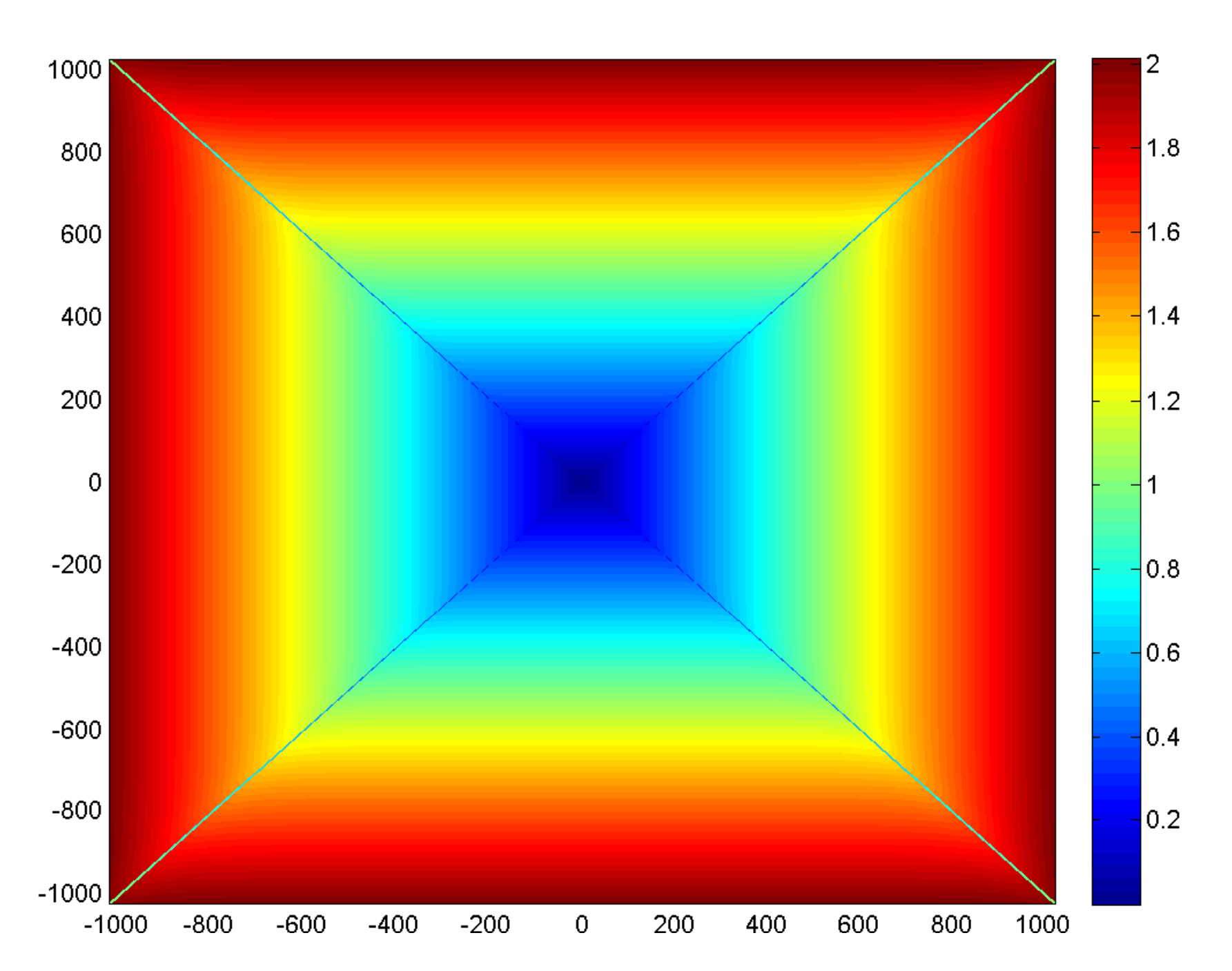}
\put(-70,-17){Choice 2}
\end{center}
\caption{Weight functions on the pseudo-polar grid for $N=256$ and  $R=8$.}
\label{fig:001}
\end{figure}

\subsubsection{Computation of the Weighting}
\label{subsubsec:weighting}

For the FDST --  as also in the implementation in \url{ShearLab} -- the coefficients in the expansion
\eqref{eq:compute_w} will be computed off-line, and then hardwired in the code. This enables the weighting of a function
on the pseudo-polar grid to simply be a point-wise multiplication in each sampling point. That is, letting $J:=\hat{I}:
\Omega_R\rightarrow\bC$ be the pseudo-polar Fourier transform of an $N \times N$ image $I$ and $w:\Omega_R\rightarrow \bR^+$ be
any suitable weight function on $\Omega_R$, the values
\[
J_w(\ox,\oy) = J(\ox,\oy)\cdot \sqrt{w(\ox,\oy)} \quad \mbox{for all } (\ox,\oy) \in \Omega_R
\]
need to be computed.

Let us comment on why the square root of the weight is utilized. If the weights $w$ satisfy the
condition in Theorem \ref{thm:weight}, we obtain $P^* w P = \Id$, which can be written in a symmetric form as
follows: $(\sqrt{w} P)^* \sqrt{w} P = \Id$. This form shows that the operator $\sqrt{w} P$ can be inverted by taking
the adjoint $(\sqrt{w} P)^*$. In other words, each image can be reconstructed from its weighted pseudo-polar Fourier
transform by applying the adjoint of the weighted pseudo-polar Fourier transform. This issue will be discussed in
further detail in Subsection \ref{subsubsec:adjoint}.


\subsection{Digital Shearlets on Pseudo-Polar Grid}
\label{subsec:DSHOnPPGrid}

We next aim at deriving a faithful digitization of the shearlet transform associated with a band-limited cone-adapted
discrete shearlet system to the pseudo-polar grid. This would settle Step (S3).

\subsubsection{Preparation for Faithful Digitization}

For this, let us recall the definition of the discrete shearlet transform associated with \eqref{def:csht}; taking
the particular form \eqref{eq:psidef} of the shearlet $\psi \in L^2(\R^2)$ into account. Restricting our attention
to the cone $\cC_{21}$, we obtain
\begin{eqnarray*}
f & \mapsto & \Big\langle \hat{f},2^{-j\tfrac32}\hat\psi(S_k^TA_{4^{-j}}\cdot)\chi_{\cC_{21}} e^{2\pi i \ip{A_{4^{-j}}S_km}{\cdot}} \Big\rangle\\
& & = \Big\langle \hat{f},2^{-j\tfrac32}\hat{\psi}_1(4^{-j}\xi_1) \hat{\psi}_2(k + 2^{j}\tfrac{\xi_2}{\xi_1}) \chi_{\cC_{21}}e^{2\pi i \ip{A_{4^{-j}}S_km}{\cdot}}
\Big\rangle,
\end{eqnarray*}
for scale $j$, orientation $k$, position $m$, and cone $\iota$.

To approach a faithful digitization, we first
have to partition $\Omega_R$ according to the partitioning of the plane into $\cC_{11}$, $\cC_{12}$, $\cC_{21}$, and $\cC_{22}$,
as well as a centered rectangle $\cR$. The center  $\cC$ as defined in  \eqref{eq:00} will play the role of $\cR$.
Thus it remains to partition the
set $\Omega_R$ beyond the already defined partitioning into $\Omega_R^1$ and $\Omega_R^2$ (cf. \eqref{eq:OmegaR1} and
\eqref{eq:OmegaR2}) by setting
\[
\Omega_R^1 = \Omega_R^{11} \cup \cC \cup \Omega_R^{12}
\qquad \mbox{and} \qquad
\Omega_R^2 = \Omega_R^{21} \cup \cC \cup \Omega_R^{22},
\]
where
\begin{eqnarray*}
\Omega_R^{11} & = &  \{(-\tfrac{2n}{R}\cdot\tfrac{2\ell}{N},\tfrac{2n}{R}) : -\tfrac{N}{2} \le \ell \le \tfrac{N}{2}, \, 1 \le n \le \tfrac{RN}{2}\},\\
\Omega_R^{12} & = &  \{(-\tfrac{2n}{R}\cdot\tfrac{2\ell}{N},\tfrac{2n}{R}) : -\tfrac{N}{2} \le \ell \le \tfrac{N}{2}, \, -\tfrac{RN}{2} \le n \le -1\},\\
\Omega_R^{21} & = &  \{(\tfrac{2n}{R},-\tfrac{2n}{R}\cdot\tfrac{2\ell}{N}) : -\tfrac{N}{2} \le \ell \le \tfrac{N}{2}, \, 1 \le n \le \tfrac{RN}{2}\}\\
\Omega_R^{22} & = &  \{(\tfrac{2n}{R},-\tfrac{2n}{R}\cdot\tfrac{2\ell}{N}) : -\tfrac{N}{2} \le \ell \le \tfrac{N}{2}, \, -\tfrac{RN}{2} \le n \le -1\}.
\end{eqnarray*}

When restricting to the cone $\Omega_R^{21}$, say, the exact digitization of the coefficients of the discrete shearlet system is
\begin{eqnarray}\nonumber
\lefteqn{\sum_{\omega:=(\ox,\oy) \in \Omega_R^{21}} J(\ox,\oy) 2^{-j\frac{3}{2}} \overline{\hat{\psi}(S_k^T A_{4^{-j}} \omega)} e^{-2\pi i\ip{A_{4^{-j}} S_k m}{\omega}}}\\
\nonumber
& = & \sum_{(\ox,\oy) \in \Omega_R^{21}} J(\ox,\oy)
2^{-j\frac{3}{2}} \overline{W(4^{-j}\omega_x)V(k+2^j\tfrac{\oy}{\ox})} e^{-2\pi i\ip{A_{4^{-j}} S_k m}{\omega}}\\ \label{eq:idealshearlet}
& = & \sum_{n=1}^{\frac{RN}{2}} \sum_{\ell = -\frac{N}{2}}^{\frac{N}{2}} J(\ox,\oy)
2^{-j\frac{3}{2}} \overline{W(4^{-j}\tfrac{2n}{R})}  \overline{V(k-2^{j+1}\tfrac{\ell}{N})} e^{-2\pi i \ip{m}{S_k^T A_{4^{-j}} \omega}},
\end{eqnarray}
where $V$ and $W$ as well as the ranges of $j$, $k$, and $m$ are to be carefully chosen.

Our main objective will be to achieve a digital shearlet transform, which is an isometry. This --  as in the continuum domain
situation -- is equivalent to requiring the associated shearlet system to form a tight frame for functions $J : \Omega_R \to \CC$.
For the convenience of the reader let us recall the notion of a Parseval frame in this particular situation. A sequence
$(\varphi_\lambda)_{\lambda \in \Lambda}$ -- $\Lambda$ being some indexing set -- is a {\em tight frame} for all functions
$J : \Omega_R \to \CC$, if
\[
\sum_{\lambda \in \Lambda} \Big| \sum_{(\ox, \oy) \in \Omega_R} J(\ox, \oy) \overline{\varphi_\lambda(\ox, \oy)} \, \Big|^2
= \sum_{(\ox, \oy) \in \Omega_R} |J(\ox,\oy)|^2.
\]

In the sequel we will define digital shearlets on $\Omega_R^{21}$ and extend the definition to the other cones by symmetry.

\subsubsection{Subband Windows on the Pseudo-Polar Grid}
\label{subsec:subbandwindows}

We start by defining the scaling function, which will depend on two functions $V_0$ and $W_0$, and the generating digital shearlet,
which will depend on again two functions $V$ and $W$. $W_0$ and $W$ will be chosen to be Fourier transforms of wavelets, and
$V_0$ and $V$ will be chosen to be `bump' functions, paralleling the construction of classical shearlets.

First, let $W_0$ be the Fourier transform of the Meyer scaling function such that
\beq \label{eq:defW0}
\Sp W_0 \subseteq [-1,1]\quad\mbox{and}\quad W_0(\pm 1)=0,
\eeq
and let $V_0$ be a `bump' function satisfying
\[
\Sp V_0 \subseteq [-3/2,3/2] \qquad \mbox{with} \qquad V_0(\xi)\equiv 1\mbox{ for } |\xi|\le 1, \xi\in\bR.
\]
Then we define the {\em scaling function} $\phi$ for the digital shearlet system to be
\[
\hat\phi(\xi_1,\xi_2) = W_0(4^{-j_L}\xi_1)V_0(4^{-j_L}\xi_2), \quad (\xi_1,\xi_2)\in\bR^2.
\]
For now, we define it in continuum domain, and will later restrict this function to the pseudo-polar grid.

Let next $W$ be the Fourier transform of the Meyer wavelet function satisfying the support constraints
\beq \label{eq:supportW}
\Sp W \subseteq [-4,1/4] \cup [1/4,4] \quad \mbox{and} \quad W(\pm 1/4)=W(\pm 4)=0,
\eeq
as well as, choosing the lowest scale $j_L$ to be $j_L:=-\lceil\log_4(R/2)\rceil$,
\beq \label{eq:summabilityW}
|W_0(4^{-j_L}\xi)|^2 + \sum_{j =j_L}^{\lceil\log_4 N\rceil}  |W(4^{-j} \xi)|^2 = 1 \qquad \mbox{for all } |\xi| \le N,\; \xi \in \bR.
\eeq
We further choose $V$ to be a `bump' function  satisfying
\beq \label{eq:supportV}
\Sp V \subseteq [-1,1] \quad \mbox{and} \quad V(\pm 1)=0,
\eeq
as well as
\beq \label{eq:summabilityV1}
|V(\xi-1)|^2 + |V(\xi)|^2 + |V(\xi+1)|^2 = 1 \qquad \mbox{for all } |\xi| \le 1, \; \xi \in \bR.
\eeq
Then the {\em generating shearlet} $\psi$ for the digital shearlet system on $\Omega_R^2$ is defined as
\beq \label{eq:digitalshearlet}
\hat\psi(\xi_1,\xi_2) = W(\xi_1)V(\tfrac{\xi_2}{\xi_1}), \quad (\xi_1,\xi_2)\in\bR^2.
\eeq
Notice that \eqref{eq:summabilityV1} implies
\beq \label{eq:summabilityV2}
\sum_{s=-2^j}^{2^j} |V(2^j\xi-s)|^2 = 1 \qquad \mbox{for all } |\xi| \le 1, \; \xi \in \bR; j\ge0,
\eeq
which will become important for the analysis of frame properties. For the particular choice of $V_0$, $W_0$, $V$, and $W$
in \url{ShearLab}, we refer to Subsection~\ref{subsubsec:windowing}.

\subsubsection{Range of Parameters}

We from now on assume that $R$ and $N$ are both positive even integers and that $N=2^{n_0}$ for some integer $n_0\in\bN$.
This poses no restrictions, since both parameters can be enlarged to satisfy this condition.

We start by analyzing the range of $j$. Recalling the definition of the shearlet $\psi$ in \eqref{eq:digitalshearlet}
and the support properties of $W$ and $V$ in \eqref{eq:supportW} and \eqref{eq:supportV}, respectively, we observe that
the digitized shearlet
\beq \label{eq:prototype}
2^{-j\frac{3}{2}} W(4^{-j}\tfrac{2n}{R}) V(k-2^{j+1}\tfrac{\ell}{N}) e^{2\pi i \ip{m}{S_k^T A_{4^{-j}} \omega}}
\eeq
from \eqref{eq:idealshearlet} has radial support
\beq \label{eq:radial_k}
n = 4^{j-1} \tfrac{R}{2} + t_1, \quad t_1 = 0 ,\ldots, 4^{j-1}\cdot \tfrac{15 R}{2}
\eeq
on the cone $\Omega_R^{21}$. To determine the appropriate range of $j$, we will analyze the precise support in radial direction.
If $j<-\lceil\log (R/2)\rceil$, then $n<1$, which corresponds to only one point -- the origin -- and is dealt with by the scaling
function. If $j > \lceil\log_4 N\rceil$, we have $n \ge \frac{RN}{2}$. Hence the value $W(1/4) = 0$ (cf. \eqref{eq:supportW}) is
placed on the boundary, and these scales can be omitted. Therefore, the range of the scaling parameter will be chosen to be
\[
j \in  \{j_L, \ldots, j_H\}, \quad\mbox{where } j_L :=-\lceil\log(R/2)\rceil \mbox{ and } j_H := \lceil\log_4 N\rceil.
\]

Next, we determine the appropriate range of $k$. Again recalling the definition of the shearlet $\psi$ in \eqref{eq:digitalshearlet},
the digitized shearlet \eqref{eq:prototype} has angular support
\beq \label{eq:angular_ell}
\ell = 2^{-j-1} N(k-1) + t_2, \quad t_2 = 0, \ldots, 2^{-j} N
\eeq
on the cone $\Omega_R^{21}$. To compute the range of $k$, we start by examining the case $j\ge 0$.  If $k > 2^j$, we have $\ell \ge N/2$.
Hence the value $V(-1)=0$ (cf. \eqref{eq:supportV}) is placed on the seam line, and these parameters can be omitted. By symmetry,
we also obtain $k \ge -2^j$. Thus the shearing parameter will be chosen to be
\[
k \in \{-2^j,\ldots,2^j\}.
\]

\subsubsection{Support Size of Shearlets}

We next compute the support as well as the support size of scaled and sheared version of digital shearlets. This will be used
for the normalization of digital shearlets.

As before, we first analyze the radial support. By \eqref{eq:radial_k}, the radial supports of the windows associated with
scales $j_L < j < j_H$ is
\beq \label{eq:defi_k}
n = 4^{j-1} \tfrac{R}{2} + t_1, \quad t_1 = 0 ,\ldots, 4^{j-1} \cdot \tfrac{15 R}{2},
\eeq
and the radial support of the windows associated with the scale $j_L=-\lceil \log_4(R/2)\rceil$ and $j_H=\lceil\log_4 N\rceil$ are
\beq \label{eq:defi_k_H_L}
\begin{aligned}
n &= t_1,  &t_1&=1,\ldots,4^{j_L+1}\tfrac{R}{2}, &\mbox{ for } &j=j_L,\\
n &= 4^{j_H-1} \tfrac{R}{2}+t_1, &t_1&=0, \ldots, \tfrac{RN}{2}-4^{j_H-1}\tfrac{R}{2},& \mbox{ for }
&j = j_H.
\end{aligned}
\eeq

Turning to the angular direction, by \eqref{eq:angular_ell}, the angular support of the windows at scale $j$ associated with
shears $-2^j < k < 2^j$ is
\beq \label{eq:defi_ell}
\ell = 2^{-j-1} N(k-1) + t_2, \quad t_2 = 0, \ldots, 2^{-j} N,
\eeq
the angular support at scale $j$ associated with the shear parameter $k =-2^j$ is
\[
\ell = 2^{-j-1} N(-2^j-1) + t_2, \quad t_2 = 2^{-j} \tfrac{N}{2}, \ldots, 2^{-j} N,
\]
and for $k =2^j$ it is
\beq \label{eq:defi_ell_U}
\ell = 2^{-j-1} N(2^j-1) + t_2, \quad t_2 = 0, \ldots, 2^{-j} \tfrac{N}{2}.
\eeq
For the case  $j<0$, we simply let $k=0$ and $\ell=-N/2+t_2$ with $t_2=0,\ldots,N$. Also, for this lower frequency
case, the window function $W(4^{-j}\ox)V(k+2^j\tfrac{\oy}{\ox})$ is slightly modified to be $W(4^{-j}\ox)V_0(k+2^j\tfrac{\oy}{\ox})$.

These computations now allow us to determine the support size of the function $W(4^{-j}\ox) V(k+2^j\frac{\oy}{\ox})$ in terms
of pairs $(n,\ell)$, which for scale $j$ and shear $k$, is
\beq \label{eq:L1}
\cL^1_{j} =\left\{ \begin{array}{cll}
4^{j+1}\tfrac{R}{2} &:& j= j_L,\\
4^{j-1} \cdot \frac{15 R}{2} + 1 & : & j_L<j<j_H,\\
\tfrac{RN}{2}-4^{j-1}\tfrac{R}{2}+1 & : & j=j_H,
 \end{array} \right.
\eeq
and
\beq \label{eq:L2}
\cL^2_{j,k} =\left\{ \begin{array}{cll}
2^{-j} N + 1 & : & -2^j < k < 2^j\;\mbox{ with }\;j\ge0,\\
2^{-j} \frac{N}{2}+1 & : & k \in \{-2^j,2^j\}\;\mbox{ with }\; j\ge0,\\
N+1 & : & j<0.
 \end{array} \right.
\eeq

\subsubsection{Digitization of the Exponential Term}
\label{subsubsec:exponential}

We next digitize the exponential term in \eqref{eq:prototype}, which can be rewritten as
\[
e^{-2\pi i \ip{m}{S_k^T A_{4^{-j}} \omega}}
= e^{-2\pi i \ip{m}{(4^{-j}\ox,4^{-j}k\ox + 2^{-j} \oy)}}
= e^{-2\pi i \ip{m}{(4^{-j}\frac{2n}{R},4^{-j}k\frac{2n}{R} - 2^{-j} \frac{4\ell n}{RN})}}.
\]
We observe two obstacles:
\bitem
\item The change of variables $\tau := S_k^T A_{4^{-j}} \omega$ possible in \eqref{eq:idealshearlet} can not be
performed similarly in this situation due to the fact that the pseudo-polar grid is {\em not} invariant under the action of
$S_k^T A_{4^{-j}}$. This is however the first step in the continuum domain reasoning for tightness; see Chapter
\cite{Introduction}.
\item The Fourier transform of a function defined on the pseudo-polar grid does {\em not} satisfy any
Plancherel theorem.
\eitem
These problems require a slight adjustment of the exponential term, which will be the only adaption we
allow us to make when digitizing. This will circumvent the two obstacles and enable us to construct
a Parseval frame as well as derive a direct application of the inverse Fast Fourier transform in FDST.

The adjustment will be made by using the mapping $\theta : \bR\setminus\{0\} \to \bR$ defined by $\theta(x,y) = (x,\tfrac{y}{x})$.
This yields the modified exponential term
\begin{equation}\label{eq:expTerm}
e^{-2\pi i \ip{m}{(\theta \circ (S_k^T)^{-1})(4^{-j}\frac{2n}{R},4^{-j}k\frac{2n}{R} - 2^{-j} \frac{4\ell n}{RN})}}
= e^{-2\pi i \ip{m}{(4^{-j}\frac{2n}{R},-2^{j+1}\frac{\ell}{N})}},
\end{equation}
which can be rewritten as
\[
e^{-2\pi i \ip{m}{(4^{-j}\frac{2n}{R},-2^{j+1}\frac{\ell}{N})}}
= e^{-2\pi i (\frac{m_1}{4}+(1-k)m_2)} e^{-2\pi  i \ip{m}{(4^{-j}\frac{2t_1}{R},-2^{j+1}\frac{t_2}{N})}},
\]
with $t_1$ and $t_2$ ranging over an appropriate set defined by \eqref{eq:defi_k}, \eqref{eq:defi_k_H_L},
and \eqref{eq:defi_ell}--\eqref{eq:defi_ell_U}. Fig.~\ref{fig:adjustmentexp} illustrates this adjustment.
\begin{figure}[ht]
\begin{center}
\includegraphics[height=1.0in]{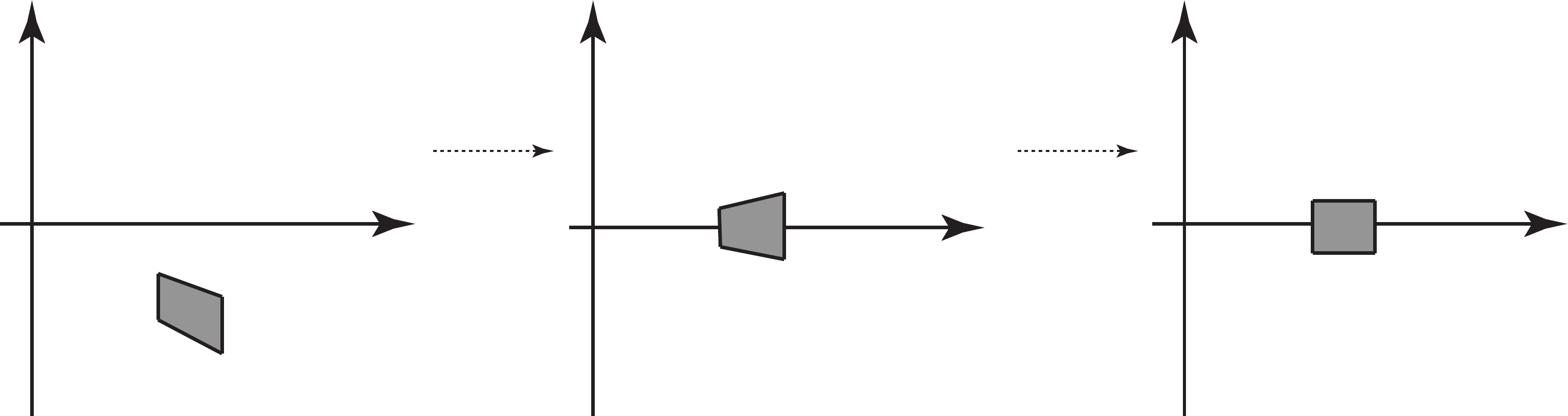}
\put(-238,60){$(S_k^T)^{-1}$}
\put(-105,60){$\theta$}
\end{center}
\caption{Adjustment of the exponential term through the map $\theta \circ (S_k^T)^{-1}$. }
\label{fig:adjustmentexp}
\end{figure}

Now, taking into account of the support size of each $W(4^{-j}\ox)V(k+2^j\frac{\oy}{\ox})$ as given in \eqref{eq:L1} and \eqref{eq:L2}, we
obtain the following reformulation of \eqref{eq:expTerm}:
\begin{equation}\label{eq:exp}
\exp\left\{-2\pi i \ip{m}{\left(\tfrac{\cL^1_j 4^{-j}(2/R)}{\cL^1_j}t_1,\tfrac{-\cL^2_{j,k}2^{j+1}(1/N)}{\cL^2_{j,k}}t_2\right)}\right\}, \quad t_1, \; t_2.
\end{equation}
This version shows that we might regard the exponential terms as characters of a suitable locally compact abelian group (see \cite{HR63})
with associated annihilator identified with the rectangle
\[
\cR_{j,k} = \left\{\left(\frac{4^{j}\tfrac{R}{2} \cdot r_1}{\cL^1_j}, -\frac{\tfrac{N}{2^{j+1}} \cdot r_2}{\cL^2_{j,k}}\right) : r_1 = 0 ,\ldots, \cL^1_j-1,\:
r_2 = 0, \ldots, \cL^2_{j,k} -1\right\},
\]
where $\cL^1_j$ and $\cL^2_{j,k}$ were defined in \eqref{eq:L1} and \eqref{eq:L2}, respectively. This viewpoint will be crucial to
guarantee that the digital shearlet system defined in Subsection \ref{subsubsec:digital} provides a Parseval frame on the pseudo-polar
grid $\Omega_R$. In practice, \eqref{eq:exp} also ensures that in Step (S3) on each windowed image on the pseudo-polar grid only a 2D-iFFT
-- in contrast to a fractional Fourier transform -- needs to be performed, thereby reducing the computational complexity.

For the low frequency square, we further require the set
\[
\cR = \{(r_1, r_2) : r_1 = -1 ,\ldots, 1,\: r_2 = -\tfrac{N}{2}, \ldots, \tfrac{N}{2}\},
\]
which will be shown to be sufficient for guaranteeing that digital shearlet system forms a Parseval frame.

\subsubsection{Digital Shearlets}
\label{subsubsec:digital}

We are now ready to define digital shearlets, which we define as functions on the pseudo-polar grid $\Omega_R$. The
spatial domain picture can thus be derived by the inverse pseudo-polar Fourier transform.

\begin{definition} \label{defi:digitalshearlets}
Retaining the definitions and notations from Subsection \ref{subsec:DSHOnPPGrid}, for all $(\ox, \oy) \in \Omega_R^{21}$,
we define {\em digital shearlets} at scale $j \in  \{j_L, \ldots, j_H\}$, shear $k=[-2^j, 2^j]\cap\bZ$, and
 spatial position $m \in \cR_{j,k}$ by
\begin{eqnarray*}
\sigma_{j,k,m}^{21}(\ox, \oy)
 =   \tfrac{C(\ox, \oy) }{\sqrt{|\cR_{j,k}|}} \, W(4^{-j} \ox) \, V^j(k+2^{j}\tfrac{\oy}{\ox})\chi_{\Omega_R^{21}}(\ox, \oy)\,
e^{2\pi i \ip{m}{(4^{-j}\ox,2^{j}\tfrac{\oy}{\ox})}},
\end{eqnarray*}
where $V^j = V$ for $j\ge0$ and $V^j=V_0$ for $j<0$, and
\[
C(\ox, \oy) = \left\{\begin{array}{cll}
1 & : &  (\ox, \oy) \not\in \cS_R^1 \cup \cS_R^2,\\
\frac{1}{\sqrt{2}} & : & (\ox, \oy) \in (\cS_R^1 \cup \cS_R^2)\setminus \cC,\\
\frac{1}{\sqrt{2(N+1)}} & : & (\ox, \oy) \in \cC.
\end{array}\right.
\]
The shearlets $\sigma_{j,k,m}^{11}, \sigma_{j,k,m}^{12}, \sigma_{j,k,m}^{22}$ on the
remaining cones are defined accordingly by symmetry with equal indexing sets for scale $j$, shear $k$, and spatial location $m$.
For $\iota_0=1,2$, $(\ox, \oy) \in \Omega_R^{\iota_0}$, and $n_0 \in \cR$, we define the {\em scaling function}
\[
\varphi_{n_0}^{\iota_0}(\ox, \oy)  =
\tfrac{C(\ox, \oy)}{\sqrt{|\cR|}} \hat{\phi}(\ox, \oy) \chi_{\Omega_R^{\iota_0}}(\ox, \oy)\, e^{2\pi i\ip{n_0}{(\frac{n}{3},\frac{\ell}{N+1})}}.
\]
Then the {\em digital shearlet system} $DSH$ is defined by
\begin{eqnarray*}
DSH&  = & \{\varphi_{n_0}^{\iota_0} : \iota_0=1,2, n_0 \in \cR\} \cup\{\sigma_{j,k,m}^{\iota}
: j \in \{j_L, \ldots, j_H\}, k\in \{-2^j, 2^j\}, \\& &\hspace*{5.5cm}  m \in \cR_{j,s},\iota=11,12,21,22\}.
\end{eqnarray*}
\end{definition}

As desired, the digital shearlet system $DSH$, which we derived as a faithful digitization of the continuum domain
band-limited cone-adapted discrete shearlet system, forms a Parseval frame for $J : \Omega_R \to \CC$.

\begin{theorem}[\cite{KSZ11}]\label{thm:DSHtight}
The digital shearlet system $DSH$ defined in Definition \ref{defi:digitalshearlets} forms a Parseval frame for functions $J : \Omega_R \to \CC$.
\end{theorem}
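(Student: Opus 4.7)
The plan is to verify the Parseval identity
\[
\sum_{\lambda \in DSH} \Bigl|\sum_{(\ox,\oy)\in\Omega_R} J(\ox,\oy)\overline{\varphi_\lambda(\ox,\oy)}\Bigr|^2 = \sum_{(\ox,\oy)\in\Omega_R} |J(\ox,\oy)|^2
\]
by a cone-by-cone Fourier--Parseval argument. The starting observation is that, thanks to the indicator $\chi_{\Omega_R^\iota}$, each shearlet $\sigma^\iota_{j,k,m}$ vanishes outside its cone $\Omega_R^\iota$, and the four cones together with the seam lines $\cS_R^1\cup\cS_R^2$ and the center $\cC$ cover $\Omega_R$. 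I would therefore concentrate on one cone, say $\Omega_R^{21}$, handle the others by symmetry, and reassemble the pieces at the end with the weights $C(\ox,\oy)$ taking care of the multiplicities at seam lines and center.

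The heart of the argument is the analysis for fixed scale $j$ and shear $k$. Unpacking the definition, $\langle J,\sigma^{21}_{j,k,m}\rangle$ is, up to the normalization $1/\sqrt{|\cR_{j,k}|}$, the discrete Fourier transform in the variable $m$ of the windowed function $(\ox,\oy)\mapsto C(\ox,\oy)J(\ox,\oy)\overline{W(4^{-j}\ox)V^j(k+2^j\oy/\ox)}\chi_{\Omega_R^{21}}(\ox,\oy)$. The essential input here is Subsection~\ref{subsubsec:exponential}: after the reparameterization $\theta\circ(S_k^T)^{-1}$, the exponentials $\{e^{-2\pi i\langle m,(4^{-j}\ox,2^j\oy/\ox)\rangle}\}_{m\in\cR_{j,k}}$ become the characters of the finite abelian group matching the window support $\cL^1_j\times\cL^2_{j,k}$ of \eqref{eq:L1}--\eqref{eq:L2}, with $\cR_{j,k}$ playing the role of its annihilator. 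Standard Parseval for the DFT on this finite group, together with the $1/\sqrt{|\cR_{j,k}|}$ normalization, then yields the pointwise identity
\[
\sum_{m\in\cR_{j,k}} \bigl|\langle J,\sigma^{21}_{j,k,m}\rangle\bigr|^2 = \sum_{(\ox,\oy)\in\Omega_R^{21}} |J(\ox,\oy)|^2\,C(\ox,\oy)^2\,|W(4^{-j}\ox)|^2\,\bigl|V^j(k+2^j\tfrac{\oy}{\ox})\bigr|^2.
\]

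With that in hand, summing over $k\in\{-2^j,\ldots,2^j\}$ collapses the angular window sum to $1$ by \eqref{eq:summabilityV2} (applied on the cone where $|\oy/\ox|\le 1$, with $V_0$ taking the role of $V$ at scales $j<0$), and summing over $j\in\{j_L,\ldots,j_H\}$ collapses the radial window sum by \eqref{eq:summabilityW}, the missing $|W_0(4^{-j_L}\ox)|^2$ piece being supplied by an entirely analogous DFT/Parseval computation for the scaling functions $\varphi^{\iota_0}_{n_0}$ indexed by $n_0\in\cR$. Finally, assembling the contributions from the four cones, an interior point with $C=1$ is counted exactly once; a seam-line point, which is shared by $\Omega_R^1$ and $\Omega_R^2$, receives two contributions each weighted by $(1/\sqrt{2})^2$, summing to $1$; and the center $(0,0)$, which appears $2(N+1)$ times in $\Omega_R$, is rebalanced by the factor $1/\sqrt{2(N+1)}$ in both shearlets and scaling functions.

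The principal obstacle is the DFT step: one must verify that for every admissible $(j,k)$ the characters $\{e^{-2\pi i\langle m,(4^{-j}\ox,2^j\oy/\ox)\rangle}\}_{m\in\cR_{j,k}}$ restricted to the support of $W(4^{-j}\ox)V^j(k+2^j\oy/\ox)\chi_{\Omega_R^{21}}$ really form an orthogonal family whose cardinality matches the number of grid points in that support. This is delicate at the boundary scales $j=j_L,j_H$ and at the boundary shears $k=\pm 2^j$, where the window support changes shape (cf.\ \eqref{eq:defi_k_H_L}, \eqref{eq:defi_ell_U}) and the rectangle $\cR_{j,k}$ built from $\cL^1_j,\cL^2_{j,k}$ must be matched to the truncated support exactly; the adjusted exponential \eqref{eq:expTerm} obtained via $\theta\circ(S_k^T)^{-1}$ is precisely what makes this matching possible and what lets one bypass the absence of a genuine Plancherel theorem on the pseudo-polar grid itself.
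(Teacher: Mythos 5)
Your proposal is correct and follows essentially the same route as the paper's proof: split the Parseval sum into the scaling-function and shearlet contributions, apply the finite Plancherel/DFT identity on each window using the rectangles $\cR$ and $\cR_{j,k}$ (justified by the adjusted exponential of Subsection \ref{subsubsec:exponential}), collapse the shear sum via \eqref{eq:summabilityV2} and the scale sum via \eqref{eq:summabilityW}, and let $C(\ox,\oy)$ absorb the multiplicities at the seam lines and the center. Your explicit accounting of the multiplicity factors and your flagging of the support-matching issue at the boundary scales and shears are points the paper treats only implicitly, but the argument is the same.
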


\begin{proof}
Letting $J : \Omega_R \to \CC$, we claim that
\begin{equation}\label{eq:RHSLHS}
\langle J,J\rangle_{\Omega_R}=\sum_{\iota_0,n_0}|\langle J,\varphi_n^{\iota_0} \rangle_{\Omega_R}|^2
+\sum_{\iota,j,k,m}|\langle J,\sigma_{j,k,m}^{\iota} \rangle_{\Omega_R}|^2
\end{equation}
which proves the result. Here $\langle J_1, J_2\rangle_{\Omega_R}:=\sum_{(\ox,\oy)\in\Omega_R}J_1(\ox,\oy)\overline{J_2(\ox,\oy)}$ for $J_1, J_2:\Omega_R\rightarrow \bC$.

We start by analyzing the first term on the RHS of \eqref{eq:RHSLHS}. Let $\iota_0 \in \{1, 2\}$ and
$J_C:\Omega_R\rightarrow\bC$ be defined by $J_C(\ox,\oy):=C(\ox,\oy)\cdot J(\ox,\oy)$ for $(\ox,\oy)\in\Omega_R$.
Using the support conditions of $\hat\phi$,
\begin{eqnarray*}
\hspace*{-0.5cm} &&\sum_n|\langle J,\varphi_{n_0}^{\iota_0}\rangle_{\Omega_R}|^2=\sum_{n_0}\Big|\sum_{(\ox,\oy) \in \Omega_R^{\iota_0}}
J(\ox,\oy) \overline{\varphi_{n_0}^{\iota_0}(\ox,\oy)}\Big|^2\\
& = & \frac{1}{|\cR|} \sum_{n_0} \Big|\sum_{(\ox,\oy) \in \Omega_R^{\iota_0}}   J_C(\ox,\oy)\cdot
\hat{\phi}(\ox, \oy) \cdot e^{-2\pi i\ip{n_0}{(\frac{n}{3},\frac{\ell}{N+1})}}\Big|^2\\
& = & \frac{1}{|\cR|} \sum_{n_0} \Big|\sum_{n=-1}^{1} \sum_{\ell=-N/2}^{N/2}
 J_C(\ox,\oy)\cdot \hat{\phi}(\ox, \oy) \cdot e^{-2\pi i\ip{n_0}{(\frac{n}{3},\frac{\ell}{N+1})}}\Big|^2.
\end{eqnarray*}
The choice of $\cR$ now allows us to use the Plancherel formula, see Subsection \ref{subsubsec:exponential}. Exploiting again
support properties (see Subsection~\ref{subsubsec:exponential}), we conclude that
\[
\sum_n|\langle J,\varphi_n^{\iota_0}\rangle_{\Omega_R}|^2=\sum_{(\ox,\oy) \in \Omega_R^{\iota_0}} |C(\ox,\oy) \cdot J(\ox,\oy)|^2 \cdot |\hat{\phi}(\ox, \oy)|^2.
\]
Combining $\iota_0=1,2$ and using \eqref{eq:defW0}, we proved
\beq \label{eq:firstterm}
\sum_{\iota_0}\sum_{n_0}|\langle J,\varphi_{n_0}^{\iota_0}\rangle_{\Omega_R}|^2=
\sum_{(\ox,\oy) \in \Omega_R} |J(\ox,\oy)|^2 \cdot |W_0(\ox)|^2.
\eeq

Next we study the second term on the RHS in \eqref{eq:RHSLHS}. By symmetry, it suffices to consider the case $\iota=21$.
By the support conditions on $W$ and $V$ (see \eqref{eq:supportW} and \eqref{eq:supportV}),
{\allowdisplaybreaks
\begin{eqnarray*}
&&\sum_{j,k,m}|\langle J,\sigma_{j,k,m}^{21}\rangle_{\Omega_R}|^2=
\sum_{j,k} \sum_{m \in \cR_{j,k}} \Big|\sum_{(\ox,\oy) \in \Omega_R^{21}} J(\ox,\oy)
\overline{\sigma_{j,k,m}^{21}(\ox,\oy)}\Big|^2\\
& = & \sum_{j,k} \frac{1}{|\cR_{j,k}|} \sum_{m \in \cR_{j,k}} \Big|\sum_{(\ox,\oy) \in \Omega_R^{21}} J_C(\ox,\oy) \cdot \overline{W(4^{-j} \ox)}
\\ & & \hspace*{3cm} \cdot
 \overline{V^j(k+2^{j}\tfrac{\oy}{\ox})} \cdot e^{-2\pi i\ip{m}{(4^{-j}\ox,2^{j}\frac{\oy}{\ox})}}\Big|^2\\  \nonumber
& = & \sum_{j,k} \frac{1}{|\cR_{j,k}|} \sum_{m \in \cR_{j,k}} \Big|\sum_{n=4^{j-1} (R/2)}^{4^{j+1} (R/2)} \sum_{\ell = 2^{-j-1}N(k-1)}^{2^{-j-1}N(k+1)}
 J_C(\ox,\oy)  \\
& &  \hspace*{3cm} \cdot  \overline{W(4^{-j} \ox)}\cdot \overline{V^j(k+2^{j}\tfrac{\oy}{\ox})} \cdot e^{-2\pi i\ip{m}{(4^{-j}\frac{2n}{R},-2^{j+1}\frac{\ell}{N})}}\Big|^2.
\end{eqnarray*}
}
Similarly as before, the choice of $\cR_{j,k}$ does allow us to use the Plancherel formula, see Subsection \ref{subsubsec:exponential}.
Hence,
\[
\sum_{j,k,m}|\langle J,\sigma_{j,k,m}^{21}\rangle_{\Omega_R}|^2=
\sum_{j,k} \sum_{(\ox,\oy) \in \Omega_R^{21}} \Big| J_C(\ox,\oy)\cdot\overline{W(4^{-j} \ox)V^j(k+2^{j}\tfrac{\oy}{\ox})}\Big|^2.
\]
Next we use \eqref{eq:summabilityV2} to obtain
\begin{eqnarray} \nonumber
\lefteqn{\sum_{j,k} \sum_{(\ox,\oy) \in \Omega_R^{21}} \Big| J_C(\ox,\oy) \cdot\overline{W(4^{-j} \ox)} \cdot\overline{V^j(k+2^{j}\tfrac{\oy}{\ox})}\Big|^2}\\ \nonumber
& = & \hspace*{-0.25cm} \sum_{(\ox,\oy) \in \Omega_R^{21}} | J_C(\ox,\oy)|^2 \sum_{j =j_L}^{j_H}
 |W(4^{-j} \ox)|^2 \cdot  \sum_{k=-2^j}^{2^j} |V^j(k+2^{j}\tfrac{\oy}{\ox})|^2 \\ \nonumber
& = & \hspace*{-0.25cm} \sum_{(\ox,\oy) \in \Omega_R^{21}} | J_C(\ox,\oy)|^2 \sum_{j = j_L}^{j_H}
 |W(4^{-j} \ox)|^2.
\end{eqnarray}
Thus the second term on the RHS in \eqref{eq:RHSLHS} equals
\beq \label{eq:secondterm}
\sum_{\iota}\sum_{j,k,m}|\langle J,\sigma_{j,k,m}^{\iota}\rangle_{\Omega_R}|^2=\sum_{(\ox,\oy) \in \Omega_R} |J(\ox,\oy)|^2\cdot \sum_{j =j_L}^{j_H}.
 |W(4^{-j} \ox)|^2.
\eeq
Finally, our claim \eqref{eq:RHSLHS} follows from combining \eqref{eq:firstterm}, \eqref{eq:secondterm}, and \eqref{eq:summabilityW}.
\qed
\end{proof}

\subsubsection{Digital Shearlet Windowing}
\label{subsubsec:windowing}

The final Step (S3) of the FDST then consists in decomposing the data on the points of the pseudo-polar grid given by the
previously -- in Steps (S1) and (S2) -- computed weighted pseudo-polar image $J_w : \Omega_R \to \CC$ into rectangular
subband windows according to the digital shearlet system DSH defined in Definition \ref{defi:digitalshearlets}, followed
by a 2D-iFFT. More precisely, given $J_w$, the set of digital shearlet coefficients
\[
c^{\iota_0}_{n_0} := \ip{J_w}{\varphi_{n_0}^{\iota_0}}_{\Omega_R}\quad \mbox{for all }  \iota_0, n_0
\]
and
\[
c_{j,k,m}^\iota := \ip{J_w}{\sigma_{j,k,m}^{\iota}}_{\Omega_R}\quad \mbox{for all } j,k,m,\iota
\]
is computed followed by application of the 2D-iFFT to each windowed image $J_w\varphi_{0}^{\iota_0}$ and $J_w\sigma_{j,k,0}^\iota$
restricted on the support of $\varphi_0^{\iota_0}$ and $\sigma_{j,k,0}^\iota$, respectively.

The definition of the digital shearlet system DSH in Definition \ref{defi:digitalshearlets} requires appropriate choices
of the functions $\phi$, $V_0$, $V$, $W_0$, and $W$, and the required conditions are stated throughout Subsection
\ref{subsec:subbandwindows}. We now discuss one particular choice, which is chosen in \url{ShearLab}. We start selecting the
 `wavelets' $W_0$ and $W$.  In Subsection \ref{subsec:subbandwindows}, these functions were defined to be
Fourier transforms of the Meyer scaling function and wavelet function, respectively, i.e.,
\[
W_0(\xi)=\left\{
\begin{array}{lcl}
1& : & |\xi|\leq\frac{1}{4},\\
\cos\left[\frac{\pi}{2}\nu(\frac{4}{3}|\xi|-\frac13)\right]&:& \frac{1}{4}\le|\xi|\le 1,\\
0 &:& \mbox{otherwise},
\end{array}\right.
\]
and
\[
W(\xi)=
\left\{\begin{array}{lcl}
\sin\left[\frac{\pi}{2}\nu(\frac{4}{3}|\xi|-\frac13)\right]&:& \frac{1}{4}\le|\xi|\leq 1,\\
\cos\left[\frac{\pi}{2}\nu(\frac{1}{3}|\xi|-\frac13)\right]&:& 1\leq|\xi|\le 4,\\
0 &:& \mbox{otherwise},
\end{array}\right.
\]
where $\nu\ge0$ is a $C^k$ function or $C^\infty$ function such that $\nu(x)+\nu(1-x)=1$ for $0\le x\le1$.
One possible choice for $\nu$ is the function $\nu(x) = x^4(35-84x+70x^2-20x^3)$, $0\le x\le 1$, which
then automatically fixes $W_0$ and $W$. Since $|W_0(\xi)|^2+|W(\xi)|^2=1$ for $|\xi|\le 1$, the required
condition \eqref{eq:summabilityW} is satisfied.
The function $\nu$  can be also used to design the `bump' function $V$ as well, which needs to satisfy
\eqref{eq:summabilityV1}. One possible choice for $V$ is to define it by
$V(\xi)=\sqrt{\nu(1+\xi)+\nu(1-\xi)}$, $-1\le \xi\le 1$. $V_0$ can then simply be chosen as $V_0\equiv 1$.
Let us finally mention that $\phi$ is defined depending on $V_0$ and $W_0$, wherefore fixing these two
functions determines $\phi$ uniquely.



\subsection{Algorithmic Realization of the FDST}
\label{subsec:FDST}

We have previously discussed all main ingredients of the fast digital shearlet transform (FDST) -- Fast PPFT, Weighting, and Digital Shearlet
Windowing --, and will now summarize those findings. Depending on the application at hand, a fast inverse transform is required, which
we will also detail in the sequel. In fact, we will present two possibilities: the Adjoint FDST and the Inverse FDST depending on whether
the weighting allows to use the adjoint for reconstruction or whether an iterative procedure is required for higher accuracy. Fig.~\ref{fig:flowcharts}
provides an overview of the main steps of of the FDST and its inverse. For
a more detailed description of FDST, Adjoint FDST, and Inverse FDST in form of pseudo-code, we refer to \cite{KSZ11}.
\begin{figure}[ht]
\begin{center}
\includegraphics[height=2.1in]{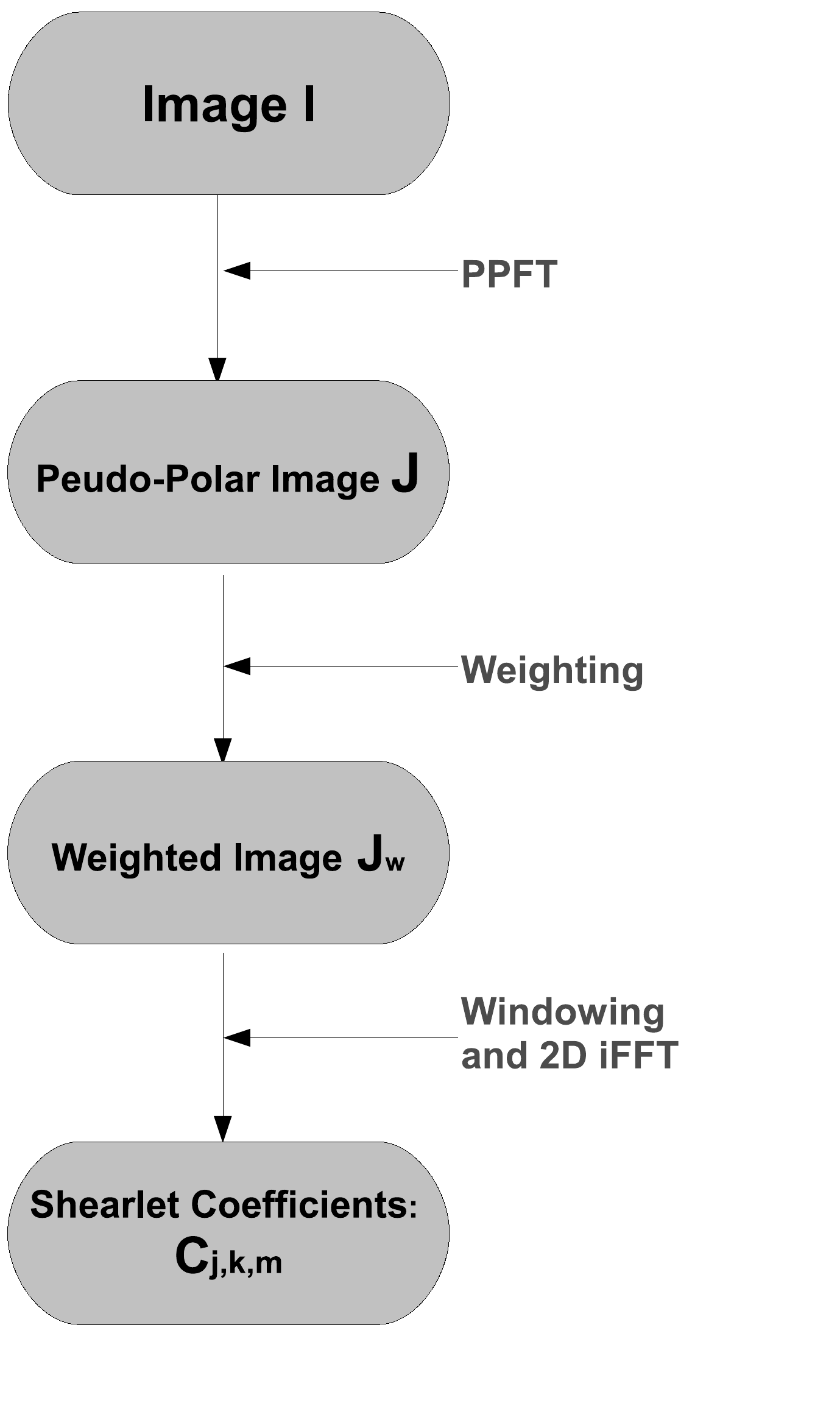}
\includegraphics[height=2.1in]{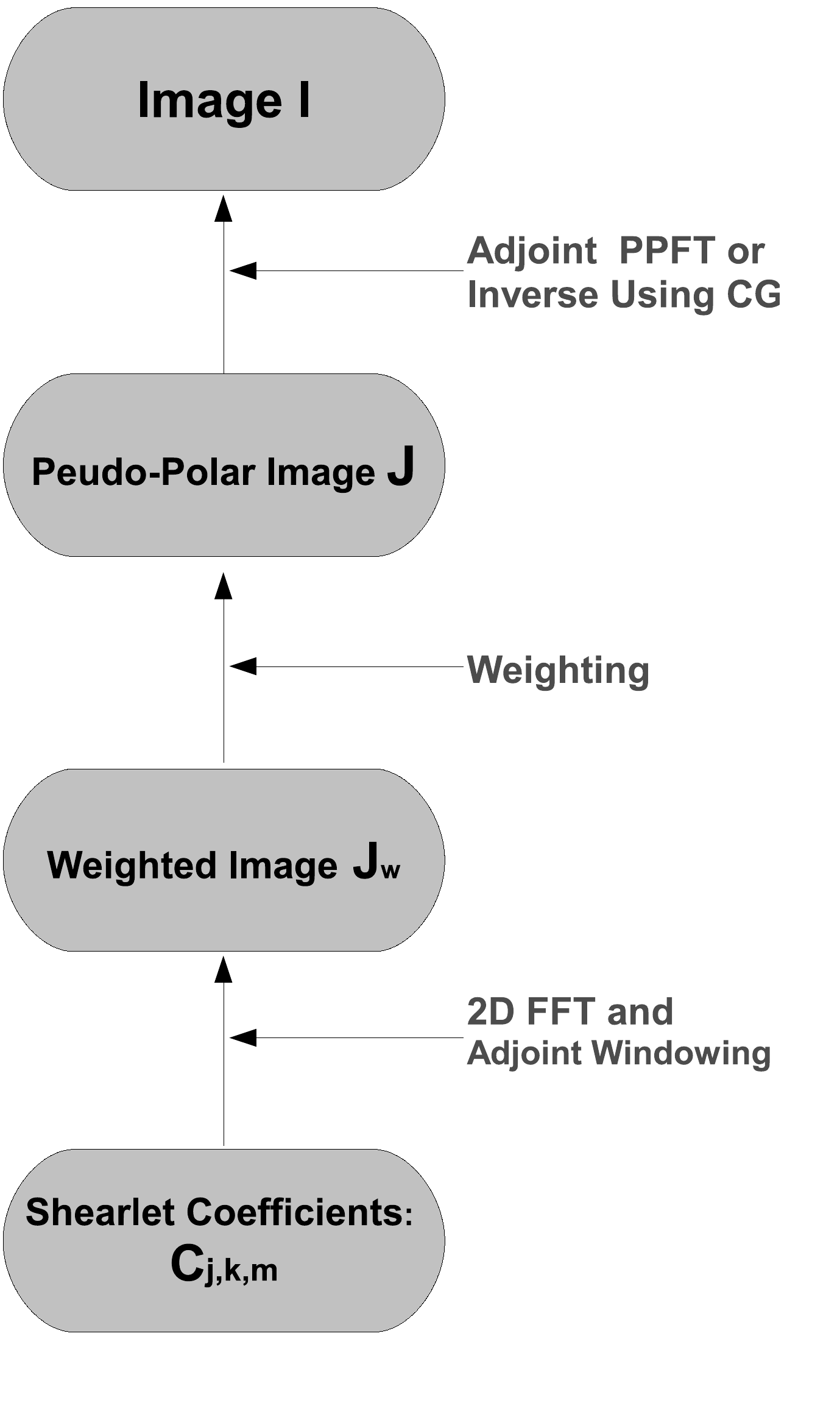}
\end{center}
\vspace{-0.5cm}
\caption{Flowcharts of the FDST (left) and its inverse (right).}
\label{fig:flowcharts}
\end{figure}

For the sake of brevity, we now let $P$, $w$, and $W$ denote the Fast PPFT from Subsection \ref{subsubsec:forward}, the weighting
on the pseudo-polar grid described in Subsection \ref{subsubsec:weighting}, and windowing operator consisting of the application of
the shearlet windows followed by 2D-iFFT to each array as detailed in Subsection \ref{subsubsec:windowing}, respectively.

\subsubsection{FDST}

We can summarize the steps of the algorithm FDST as follows:

\begin{itemize}
\item {\bf Step (S1):} For a given image $I$, apply the Fast PPFT as described in Subsection \ref{subsubsec:forward}
to obtain the function $P I : \Omega_R \to \C$.
\item {\bf Step (S2):} Apply the square root of an off-line computed weight function $w : \Omega_R \to \C$ to $P I$ as described
in Subsection \ref{subsubsec:weighting}, yielding $\sqrt{w} P I : \Omega_R \to \C$.
\item {\bf Step (S3):} Apply the shearlet windows to the function $w P I$, followed by a 2D-iFFT to each array to obtain the
shearlet coefficients $W \sqrt{w} P I$, which we denote by $c^{\iota_0}_{n_0}$, $\iota_0, n_0$ and $c_{j,k,m}^\iota$, $j,k,m,\iota$.
\end{itemize}

\subsubsection{Adjoint FDST}
\label{subsubsec:adjoint}

Assuming that the weight function $w$ used in Step (S2) satisfies the condition in Theorem \ref{thm:weight}, and using the Parseval
frame property of the digital shearlet system (Theorem \ref{thm:DSHtight}), we obtain
\[
(W\sqrt{w}P)^\star W\sqrt{w}P = P^\star\sqrt{w}(W^\star W)\sqrt{w}P = P^\star wP = Id.
\]
Hence in this case, the FDST, which is abbreviated by $W\sqrt{w}P$ can be inverted by applying the adjoint FDST, which
cascades the following steps:
\begin{itemize}
\item {\bf Step 1:} For given shearlet coefficients $C$, i.e., $c^{\iota_0}_{n_0}$, $\iota_0, n_0$ and $c_{j,k,m}^\iota$, $j,k,m,\iota$,
compute the linear combination of the shearlet windows with coefficients $c^{\iota_0}_{n_0}$, $\iota_0, n_0$ and $c_{j,k,m}^\iota$, $j,k,m,\iota$.
This gives the function $W^\star C : \Omega_R \to \C$.
\item {\bf Step 2:} Apply the square root of an off-line computed weight function $w : \Omega_R \to \C$ to $W^\star C$, yielding the
function $\sqrt{w} W^\star C : \Omega_R \to \C$.
\item {\bf Step 3:} Apply the Fast Adjoint PPFT by running the Fast PPFT `backwards'. For this, we just notice that the adjoint
fractional Fourier transform of a vector $c\in\bC^{N+1}$ with respect to a constant $\alpha\in\bC$ is  given by $F_{N+1}^{-\alpha}c$.
Also, for $m>N$, the adjoint padding operator $E_{m,N}^\star$ applied to a vector $c\in\bC^{m}$ is given by $(E^\star_{m,N}c)(k) = c(k)$,
$k=-N/2,\ldots, N/2-1$. The Adjoint PPFT gives an image $ P^\star \sqrt{w} W^\star C$.
\end{itemize}

\subsubsection{Inverse FDST}

Normally -- as also with the relaxed form of weights debated in Subsection \ref{subsubsec:relaxed} -- the weights will not
satisfy the conditions of Theorem \ref{thm:weight} precisely. A measure for whether application of the adjoint is still
feasible
{will be discussed in Subsection \ref{subsec:isometry}.}
If higher accuracy of the reconstruction is required, one might use iterative methods, such as conjugate gradient methods.
Since the digital shearlet system forms a Parseval frame, we always have
\[
W^\star W\sqrt{w}P = \sqrt{w}P.
\]
Hence, iterative methods need to be `only' applied to reconstruct an image $I$ from knowledge of $J := \sqrt{w}P I$, i.e., to solve
the equation
\[
P^\star w P I = P^\star w J
\]
for $I$. Since $J$ might not be in the range of $P$, $I$ is typically computed by solving the weighted least square problem
$\min_{I}\|\sqrt{w}P I-\sqrt{w}J\|_2$. Since the matrix corresponding to $P^\star P$ is symmetric positive definite, iterative
methods such as the conjugate gradient method are applicable. The conjugate gradient method is then applied to the equation
$A x = b$ with $A = P^\star w P$ and $b = P^\star w J$. Its performance can be measured by the condition number of the operator
$P^\star w P$: $cond(P^\star w P) =\lambda_{max}(P^\star w P)/\lambda_{min}(P^\star w P)$, and it turns out that
the weight function serves as a pre-conditioner. We remark that this measure is more closely studied in Subsection
\ref{subsec:isometry}.

To illustrate the behavior of the weights with respect to this measure, in Table \ref{tab:12} we compute $cond(P^\star w P)$ for
the weight functions arising from Choices 1 and 2 (cf. Subsection \ref{subsubsec:relaxed}) with oversampling rate $R=8$.

\begin{table}[ht]
\caption{Comparison of Choices 1 and 2 based on the performance measure $cond(P^\star w P)$.}
\label{tab:12}
\begin{center}
\begin{tabular}{p{2cm}p{1.8cm}p{1.8cm}p{1.8cm}p{1.8cm}p{1.8cm}p{2.0cm}}
\hline\noalign{\smallskip}
$N$ & 32 & 64 & 128 & 256 & 512 \\
\noalign{\smallskip}\hline\noalign{\smallskip}
Choice 1 & 1.379 & 1.503 & 1.621 & 1.731 & 1.833 \\
Choice 2 & 1.760 & 1.887 & 2.001 & 2.104 & N/A \\
\noalign{\smallskip}\hline\noalign{\smallskip}
\end{tabular}
\end{center}
\end{table}


\section{Digital Shearlet Transform using Compactly Supported Shearlets}
\label{sec:dst}

In this section, we will discuss two implementation strategies for computing shearlet coefficients associated with a cone-adapted
discrete shearlet system now based on {\em compactly supported} shearlets, as introduced in Chapter \cite{Introduction}. Again,
one main focus will be on deriving a digitization which is faithful to the continuum setting.

Recall that in the context of wavelet theory, faithful digitization is achieved by the concept of multiresolution analysis, where
scaling and translation are digitized by discrete operations: Downsampling, upsampling and convolution. In the case of directional
transforms however, {\em three} types of operators: Scaling, translation and direction, need to be digitized. In this section,
we will pay particular attention to deriving a framework in which each of the three operators is faithfully interpreted as a
digitized operation in digital domain. Both approaches will be based on the following digitization strategies:
\begin{itemize}
\item Scaling and translation: A multiresolution analysis associated with anisotropic scaling $A_{2^j}$ can be applied for each shear
parameter $k$.
\item Directionality:
A faithful digitization of shear operator $S_{2^{-j/2}k}$ has to be achieved with particular care.
\end{itemize}
After stating and discussing the two main obstacles we are facing when considering compactly supported shearlets in Subsection
\ref{subsec:problems}, we present the digital separable shearlet transform (DSST), which is associated with a shearlet system
generated by a separable function  alongside with discussions on its properties, e.g., its redundancy; see Subsection \ref{subsec:dst_dst}.
Subsection \ref{subsec:DNST} then presents the digital non-separable shearlet transform (DNST), whose shearlet elements are generated by non-
separable shearlet generator.

\subsection{Problems with Digitization of Compactly Supported Shearlets}
\label{subsec:problems}

Compactly supported shearlets have several advantages, and we exemplarily mention superior spatial localization and
simplified boundary adaptation. However, we have to face the following two problems:
\begin{enumerate}
\item[(P1)] Compactly supported shearlets do not form a tight frame, which prevents utilization of the adjoint as inverse transform.
\item[(P2)] There does not exist a natural hierarchical structure, mainly due to the application of a shear matrix, which -- unlike for
the wavelet transform -- does not allow a multiresolution analysis without destroying a faithful adaption of the continuum setting.
\end{enumerate}

Let us now comment on these two obstacles, before delving into the details of the implementation in Subsection \ref{subsec:dst_dst}.

\subsubsection{Tightness}
\label{subsubsec:tight}

Let us first comment on the problem of non-tightness. Letting $(\sigma_i)_{i \in I}$ denote a frame  for $L^2(\R^2)$ -- for example,
a shearlet frame --, each function $f \in L^2(\R^2)$ can be reconstructed from its frame coefficients $( \langle f,\sigma_i\rangle)_{i \in I}$
by
\[
f = \sum_{i \in I} \langle f,\sigma_i\rangle S^{-1} (\sigma_i),
\]
where $S = \sum_{i \in I} \langle \cdot ,\sigma_i\rangle \sigma_i$ is the associated frame operator on $L^2(\R^2)$, see
Chapter \cite{Introduction}. However, in case that $(\sigma_i)_{i \in I}$ does not form a {\em tight} frame, it is
in general difficult to explicitly compute the dual frame elements $S^{-1}(\sigma_i)$.

Nevertheless, it is well known that the inverse frame operator $S^{-1}$ can be effectively approximated using iterative schemes such
as the Conjugate Gradient method provided that the frame $(\sigma_i)_{i \in I}$ has 'good' frame bounds in the sense of their
ratio being `close'
to $1$, see also \cite{M099}. Therefore, now focussing on the situation of shearlet frames, we may argue that input data $f$ can
be efficiently reconstructed from its shearlet coefficients, if we use a compactly supported shearlet frame with 'good' frame bounds.
In fact, the theoretical frame bounds of compactly supported shearlet frames have been theoretically estimated as well as numerically
computed in \cite{KKL2010}. These results were derived for the class of 2D separable shearlet generators $\psi$ already described in Chapter
\cite{Introduction}, which we briefly recall for the convenience of the reader:

For positive integers $K$ and $L$ fixed, let the 1D lowpass filter $m_0$ be defined by
$$
|m_0(\xi_1)|^2 = (\cos(\pi\xi_1))^{2K}\sum_{n=0}^{L-1} {K-1+n\choose
  n}(\sin(\pi\xi_1))^{2n},
$$
for $\xi_1 \in \R$. Further, define the associated bandpass filter $m_1$ by
$$
|m_1(\xi_1)|^2 = |m_0(\xi_1+1/2)|^2, \quad \xi_1 \in \R,
$$
and the 1D scaling function $\phi_1$ by
$$
\hat{\phi_1}(\xi_1) = \prod_{j=0}^{\infty} m_0(2^{-j}\xi_1), \quad \xi_1 \in \R.
$$
Using the filter $m_1$ and scaling function $\phi_1$, we now define the 2D scaling function $\phi$ and separable shearlet generator $\psi$ by
\beq \label{eq:csGenerator}
\hat \phi(\xi_1,\xi_2) = \hat \phi_1(\xi_1)\hat \phi_1(\xi_2) \quad \text{and} \quad \hat \psi(\xi_1,\xi_2) = m_1(4\xi_1)\hat \phi_1(\xi_1)\hat \phi_1(2\xi_2).
\eeq
In \cite{KKL2010}, it was shown that compactly supported shearlets $\psi_{j,k,m}$ generated by the shearlet
generator $\psi$ form a frame for $L^2(C)^\vee$ with appropriately chosen parameters $K$ and $L$, where
$$
C = \{\xi \in \R^2 : |\xi_2/\xi_1| \leq 1, \,\, |\xi_1| \ge 1 \}.
$$
This construction is directly extended to construct a cone-adapted discrete shearlet frame for $L^2(\R^2)$  (cf. also Chapters \cite{Introduction}
and \cite{SparseApproximation}).

Table \ref{table:cs} provides some numerically estimated frame bounds in $L^2(C)$ for certain choice of $K$ and $L$. It shows that indeed the
ratio of the frame bounds of this class of compactly supported shearlet frames is sufficient small for utilizing an iterative scheme for efficient
reconstruction; in this sense the frame bounds are 'good'.
\begin{table}[h]
\caption{Numerically estimated frame bounds for various choices of the parameters $K$ and $L$.
$c_1$ and $c_2$ are the sampling constants in the sampling matrix $M_c$ for translation (see Chapter \cite{Introduction}).}
\label{table:cs}
\begin{center}
\begin{tabular}{p{2cm}p{2.25cm}p{2.25cm}p{2.25cm}p{2.25cm}}
\hline\noalign{\smallskip}
$K$ & $L$ & $c_1$ & $c_2$ & B/A \\
\noalign{\smallskip}\hline\noalign{\smallskip}
39 & 19 & 0.90 & 0.15  &  4.1084 \\
39 & 19 & 0.90 & 0.20  &  4.1085 \\
39 & 19 & 0.90 & 0.25 &   4.1104 \\
39 & 19 & 0.90 & 0.30 &   4.1328 \\
39 & 19 & 0.90 & 0.40  &  5.2495 \\
\noalign{\smallskip}\hline\noalign{\smallskip}
\end{tabular}
\end{center}
\end{table}

The frequency covering by compactly supported shearlets $\psi_{j,k,m}$,
\[
|\hat \phi(\xi)|^2+\sum_{j \ge 0}\sum_{k \in K_j} |\hat \psi(S^T_kA_{2^j}\xi)|^2 +|\hat{\tilde{\psi}}(\tilde{S}^T_k\tilde{A}_{2^j}\xi)|^2,
\]
is closely related to the ratio of frames bounds and, in particular, which areas in frequency domain cause a larger ratio.
This function is illustrated in Fig. \ref{fig:tile}, which shows that its upper and lower bounds are as expected well controlled.
\begin{figure}[h]
\begin{center}
\includegraphics[height=1.2in]{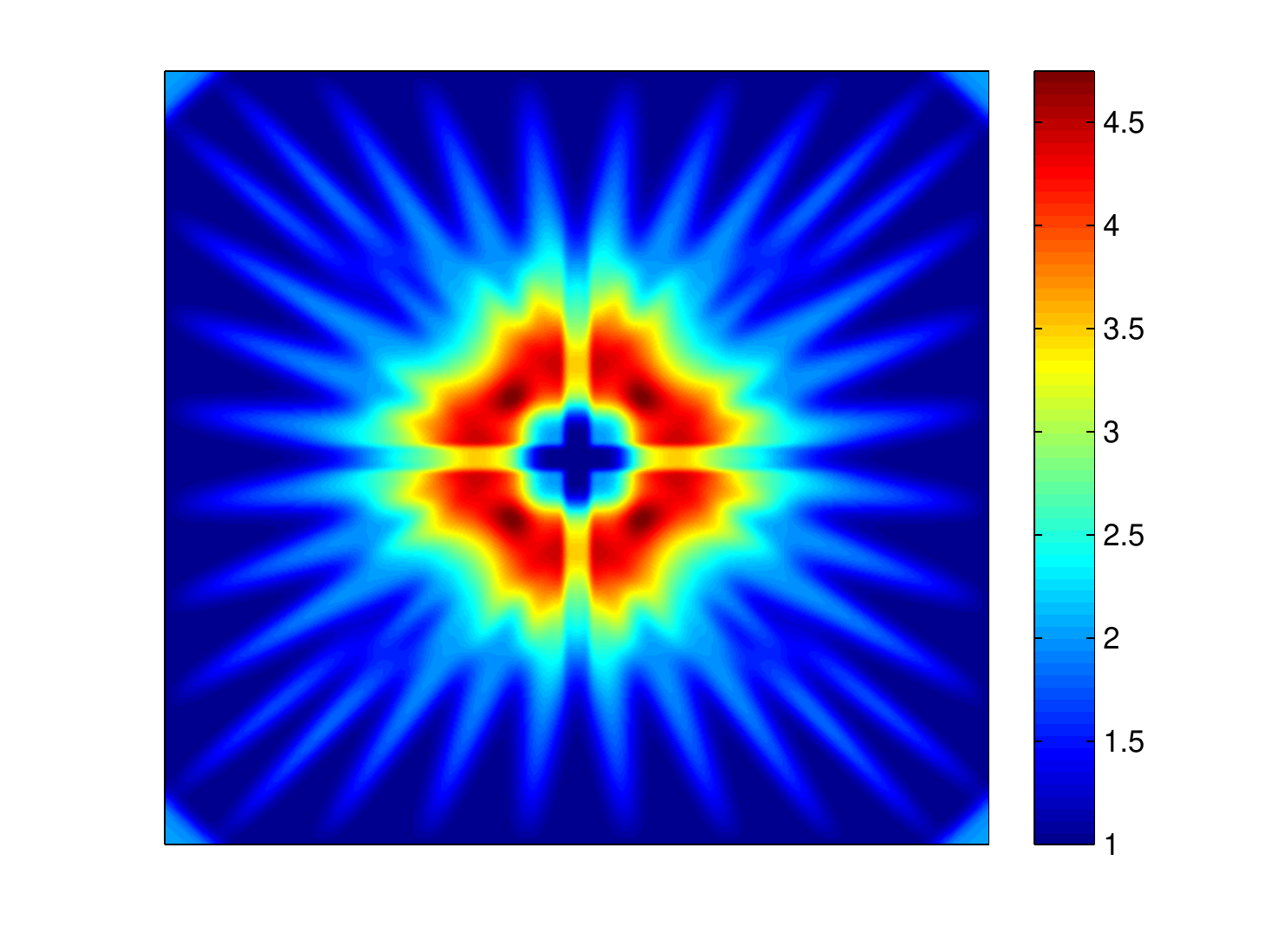}
\includegraphics[width=1.3in,height=1.2in]{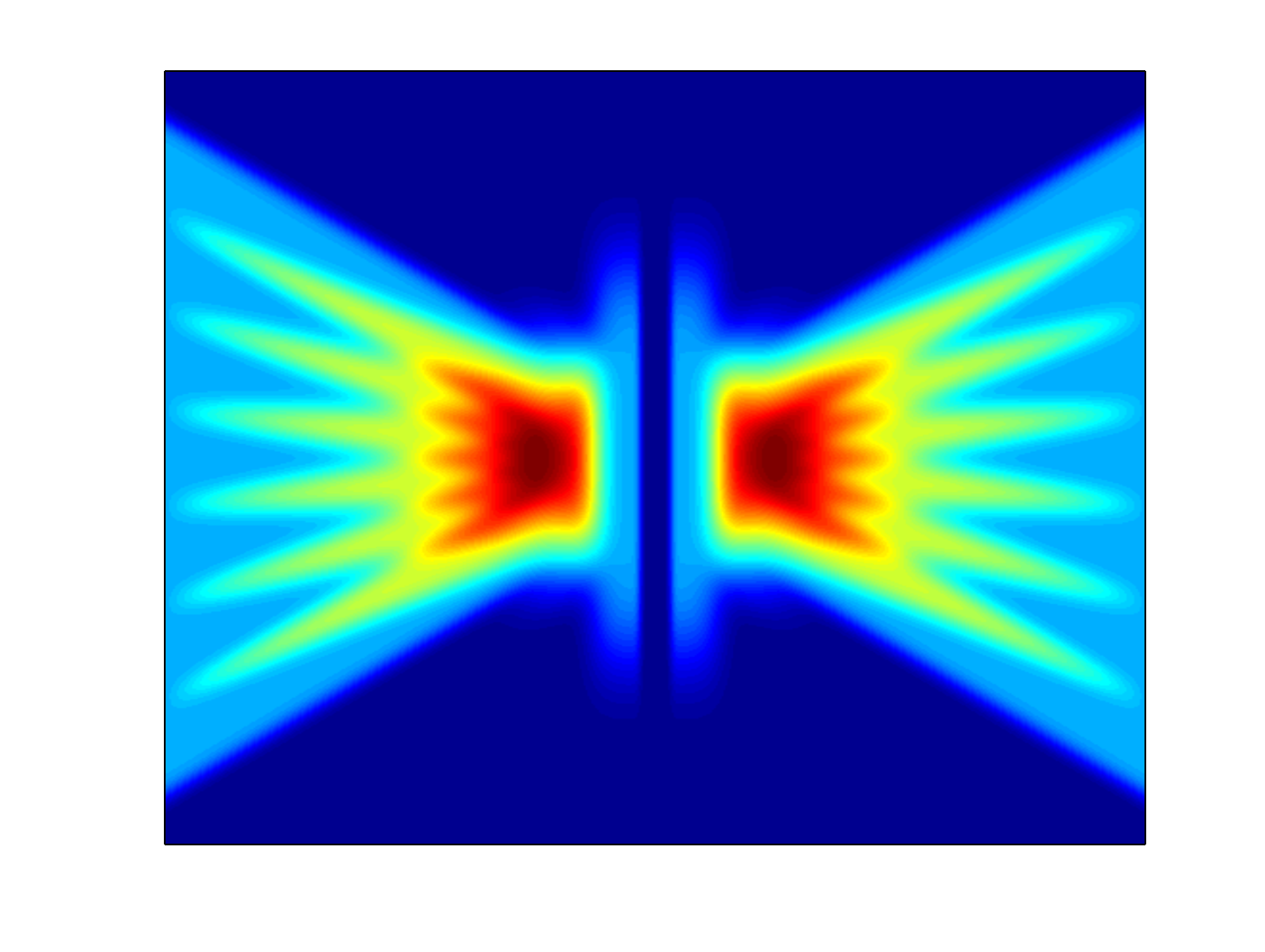}
\includegraphics[width=1.3in,height=1.2in]{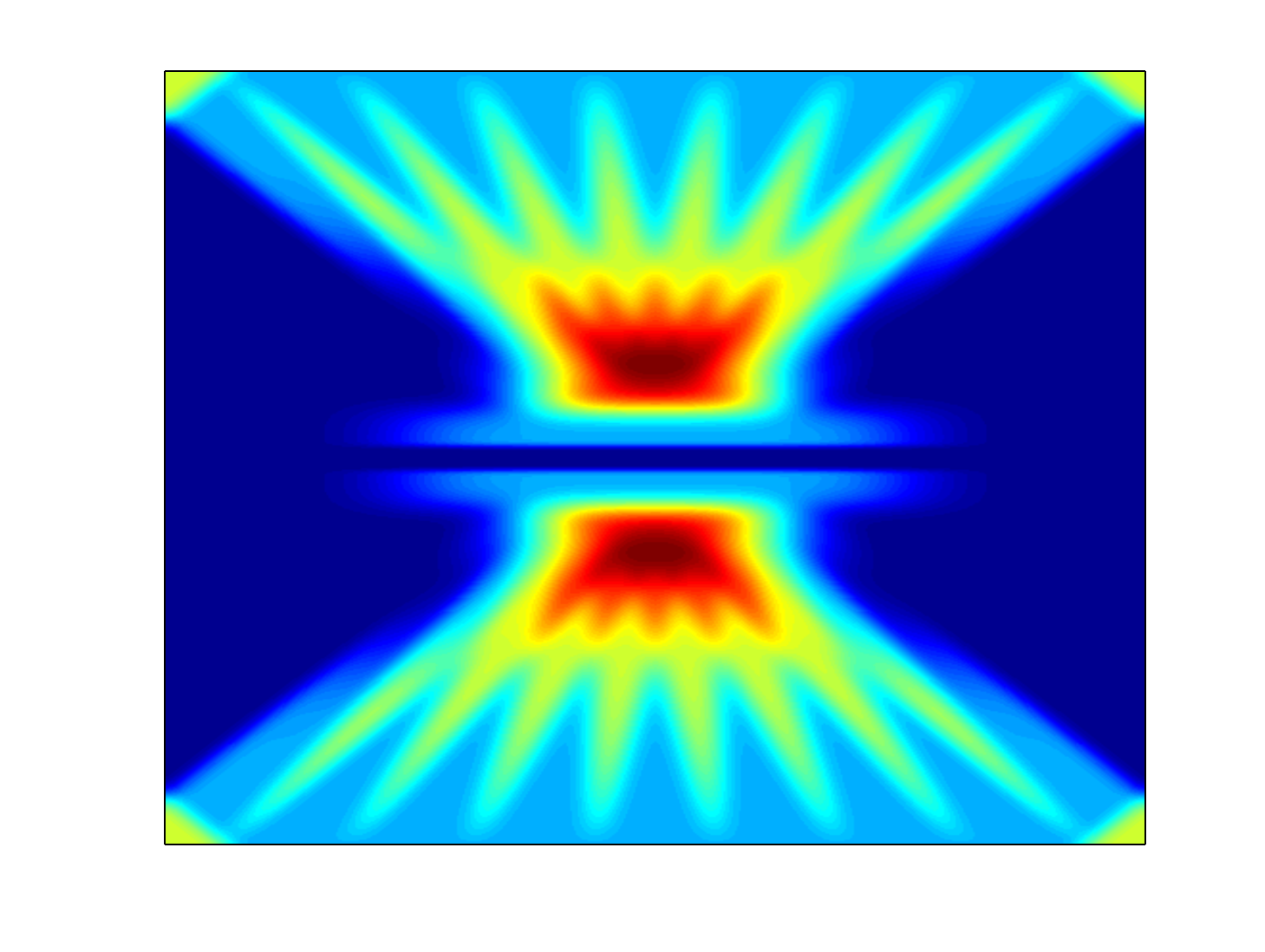}
\put(-300,0){(a) Whole frequency plane.}
\put(-175,0){(b) Horizontal cone. }
\put(-75,0){(c) Vertical cone.}
\end{center}
\caption{Frequency covering by shearlets $|\hat \psi_{j,k,m}|^2$: (a) Frequency covering of the entire frequency plane. (b) Frequency covering
of the horizontal cone. (c) Frequency covering of the vertical cone.}\label{fig:tile}
\end{figure}

\subsubsection{Hierarchical Structure}

Let us finally comment on the problem to achieve a hierarchical structuring. To allow fast implementations, the data structure of the transform is essential.
The hierarchical structure of the wavelet transform associated with a multiresolution analysis, for instance, enables a fast implementation based on filterbanks.
In addition, such a hierarchical ordering provides a full tree structure across scales, which is of particular importance for various applications such as
image compression and adaptive PDE schemes. It is in fact mainly due to this property -- and the unified treatment of the continuum and digital setting --
that the wavelet transform became an extremely successful methodology for many practical applications.

From a certain viewpoint, shearlets $\psi_{j,k,m}$ can essentially be regarded as wavelets associated with an anisotropic scale matrix $A_{2^j}$, when
the shear parameter $k$ is fixed. This observation allows to apply the wavelet transform to compute the shearlet coefficients, once the shear operation
is computed for each shear parameter $k$. This approach will be undertaken in the digital formulation of the compactly supported shearlet transform, and,
in fact, this approach implements a hierarchical structure into the shearlet transform. The reader should note that this approach does not lead to a completely
hierarchical structured shearlet transform -- also compare our discussion at the beginning of this section --, but it will be sufficient for deriving a
fast implementation while retaining a faithful digitization.

\subsection{Digital Separable Shearlet Transform (DSST)}
\label{subsec:dst_dst}

We now describe a faithful digitization of the continuum domain shearlet transform based on compactly supported shearlets as introduced in
 \cite{Lim2010}, which moreover is highly computationally efficient.

\subsubsection{Faithful Digitization of the Compactly Supported Shearlet Transform}
\label{subsubsec:digitizationOfCSST}

We start by discussing those theoretical aspects which allow a faithful digitization of the shearlet transform associated with the shearlet
system generated by \eqref{eq:csGenerator}. For this, we will only consider shearlets $\psi_{j,k,m}$ for the horizontal cone, i.e.,
belonging to $\Psi(\psi,c)$. Notice that the same procedure can be applied to compute the shearlet coefficients for the vertical cone,
i.e., those belonging to $\tilde\Psi(\tilde\psi,c)$, except for switching the order of variables.

To construct a separable shearlet generator $\psi \in L^2(\R^2)$ and an associated scaling function $\phi \in L^2(\R^2)$, let
$\phi \in L^2(\R)$ be a compactly supported 1D scaling function satisfying
\begin{equation}\label{eq:scale1}
\phi_1(x_1) = \sum_{n_1 \in \Z} h(n_1) \sqrt{2} \phi_1(2x_1-n_1)
\end{equation}
for some `appropriately chosen' filter $h$ --  we comment on the required condition below. An associated compactly supported 1D wavelet $\psi_1 \in L^2(\R)$
can then be defined by \begin{equation}\label{eq:scale2}
\psi_1(x_1) = \sum_{n_1 \in \Z} g(n_1) \sqrt{2} \phi_1(2x_1-n_1),
\end{equation}
where again $g$ is an `appropriately chosen' filter. The selected shearlet generator is then defined to be
\beq \label{eq:generatorHere}
\psi(x_1,x_2) = \psi_1(x_1)\phi_1(x_2),
\eeq
and the scaling function by
\[
\phi(x_1,x_2) = \phi_1(x_1)\phi_1(x_2).
\]

Let us comment on whether this is indeed a special case of the shearlet generators defined in \eqref{eq:csGenerator}.
The Fourier transform of $\psi$ defined in \eqref{eq:generatorHere} takes the form
\[
\hat \psi(\xi_1,\xi_2) = m_1(\xi_1/2)\hat \phi_1(\xi_1/2)\hat \phi_1(\xi_2/2),
\]
where $m_1$ is a trigonometric polynomial whose Fourier coefficients are $g(n_1)$. We need to compare this
expression with the Fourier transform of the shearlet generator $\psi$ given in \eqref{eq:csGenerator}, which is
\[
\hat \psi(\xi_1,\xi_2) = m_1(4\xi_1)\hat \phi_1(2\xi_1)\hat \phi_1(\xi_2),
\]
with 1D scaling function $\phi_1$ defined in \eqref{eq:scale1}. We remark that this later scaling function is
slightly different defined as in \eqref{eq:csGenerator}. This small adaption is for the sake of presenting a
simpler version of the implementation; essentially the same implementation strategy as the one we will describe
can be applied to the shearlet generator given in \eqref{eq:csGenerator}.

The filter coefficients $h$ and $g$ are required to be chosen so that $\psi$ satisfies a certain decay condition
(cf. \cite{KKL2010} of Chapter \cite{Introduction}) to guarantee a stable reconstruction from the shearlet coefficients.

For the signal $f \in L^2(\R^2)$ to be analyzed, we now assume that, for $J>0$ fixed, $f$ is of the form
\beq \label{eq:code2}
f(x) = \sum_{n \in \Z^2} f_J(n) 2^J \phi(2^Jx_1-n_1,2^Jx_2-n_2).
\eeq
Let us mention that this is a very natural assumption for a digital implementation in the sense that
the scaling coefficients can be viewed as sample values of $f$ -- in fact $f_J(n) = f(2^{-J}n)$ with appropriately
chosen $\phi$. Now aiming towards a faithful digitization of the shearlet coefficients $\langle f,\psi_{j,k,m}\rangle$
for $j = 0,\dots,J-1$, we first observe that
\begin{equation} \label{eq:code1}
\langle f,\psi_{j,k,m}\rangle = \langle f(S_{2^{-j/2}k}(\cdot)),\psi_{j,0,m}(\cdot)\rangle,
\end{equation}
and, WLOG we will from now on assume that $j/2$ is integer; otherwise either $\lceil j/2 \rceil$ or $\lfloor j/2 \rfloor$
would need to be taken. Our observation \eqref{eq:code1} shows us in fact precisely how to digitize the shearlet coefficients
$\langle f, \psi_{j,k,m} \rangle$: By applying the discrete separable wavelet transform associated with the anisotropic
sampling matrix $A_{2^{j}}$ to the sheared version of the data $f(S_{2^{-j/2}k}(\cdot))$.
This however requires -- compare the assumed form of $f$ given in \eqref{eq:code2} -- that
$f(S_{2^{-j/2}k}( \cdot ))$ is contained in the scaling space
$$
V_J = \{2^J\phi(2^J\cdot-n_1,2^J\cdot-n_2) : (n_1,n_2) \in \Z^2\}.
$$
It is easy to see that, for instance, if the shear parameter $2^{-j/2}k$ is non-integer, this is unfortunately not the case. The true
reason for this failure is that the shear matrix $S_{2^{-j/2}k}$ does {\em not} preserve the regular grid $2^{-J}\Z^2$ in $V_J$, i.e.,
\[
S_{2^{-j/2}k}(\Z^2) \neq \Z^2.
\]
In order to resolve this issue, we consider the new scaling space $V^k_{J+j/2,J}$ defined by
$$
V^k_{J+j/2,J} = \{2^{J+4/j}\phi(S_k(2^{J+j/2}\cdot-n_1,2^J\cdot-n_2)) : (n_1,n_2) \in \Z^2\}.
$$
We remark that the scaling space $V^k_{J+j/2,J}$ is obtained by refining the regular grid $2^{-J}\Z^2$ along the $x_1$-axis by a factor of
$2^{j/2}$. With this modification, the new grid $2^{-J-j/2}\Z \times 2^{-J}\Z$ is now invariant under the shear operator $S_{2^{-j/2}k}$,
since with $Q = \text{diag}(2,1)$,
\begin{eqnarray*}
2^{-J-j/2}\Z\times 2^{-J}\Z &=& 2^{-J}Q^{-j/2}(\Z^2) = 2^{-J}Q^{-j/2}(S_k(\Z^2))\\
&=& S_{2^{-j/2}k}(2^{-J-j/2}\Z\times 2^{-J}\Z).
\end{eqnarray*}
This allows us to rewrite $f(S_{2^{-j/2}k}( \cdot ))$ in \eqref{eq:code1} in the
following way.

\begin{lemma}
\label{lemm:rewriting}
Retaining the notations and definitions from this subsection, letting $\uparrow 2^{j/2}$ and $*_{1}$ denote the 1D upsampling operator by
a factor of $2^{j/2}$ and the 1D convolution operator along the $x_1$-axis, respectively, and setting $h_{j/2}(n_1)$ to be the Fourier
coefficients of the trigonometric polynomial
\beq \label{eq:poly1}
H_{j/2}(\xi_1) = \prod_{k=0}^{j/2-1}\sum_{n_1 \in \Z}h(n_1) e^{-2\pi i 2^k n_1 \xi_1},
\eeq
we obtain
\[
f(S_{2^{-j/2}k}(x)) = \sum_{n \in \Z^2} \tilde{f}_J(S_{k}{n})2^{J+j/4}\phi_{k}(2^{J+j/2}x_1-n_1,2^{J}x_2-n_2),
\]
where
\[
\tilde{f}_J(n) = ((f_J)_{\uparrow 2^{j/2}} *_{1} h_{j/2})(n).
\]
\end{lemma}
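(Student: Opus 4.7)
The plan is to pass from the given expansion of $f$ at scale $J$ to a refined expansion on the grid $2^{-J-j/2}\Z\times 2^{-J}\Z$ by iterating the 1D refinement relation \eqref{eq:scale1} along the $x_1$-axis, then observe that on this refined grid the shear $S_{2^{-j/2}k}$ acts as a pure reindexing. The final rewriting in the form stated in the lemma is then just a change of summation variable $p=S_k n$.

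Concretely, first I would iterate \eqref{eq:scale1} exactly $j/2$ times to get
\[
\phi_1(2^{J}x_1-m_1)=\sum_{n_1\in\Z} h_{j/2}(n_1)\, 2^{j/4}\,\phi_1\bigl(2^{J+j/2}x_1-2^{j/2}m_1-n_1\bigr),
\]
where $h_{j/2}$ is precisely the filter whose symbol is the product \eqref{eq:poly1} (a $j/2$-fold cascade of $h$ with dilated frequency variables). Plugging this into the assumed form \eqref{eq:code2} of $f$ and exchanging the order of summation, the $x_1$-index becomes $p_1:=2^{j/2}m_1+n_1$ while $p_2:=m_2$ is unchanged, and the resulting coefficient
\[
\sum_{m_1\in\Z} f_J(m_1,p_2)\, h_{j/2}\bigl(p_1-2^{j/2}m_1\bigr)
\]
is, by definition of the 1D upsampling operator $\uparrow 2^{j/2}$ followed by 1D convolution $*_1$ with $h_{j/2}$, exactly $\tilde f_J(p_1,p_2)$. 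Hence
\[
f(x)=\sum_{p\in\Z^2}\tilde f_J(p)\,2^{J+j/4}\,\phi_1(2^{J+j/2}x_1-p_1)\,\phi_1(2^{J}x_2-p_2),
\]
which is the claimed representation of $f$ inside the refined scaling space $V^k_{J+j/2,J}$ (in the special case $k=0$).

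Next I would apply $S_{2^{-j/2}k}$: substituting $x_1\mapsto x_1+2^{-j/2}k x_2$ inside $\phi_1(2^{J+j/2}x_1-p_1)$ yields $\phi_1(2^{J+j/2}x_1+2^{J}kx_2-p_1)$, while the second factor is unaffected. This is where the grid-invariance identity $S_{2^{-j/2}k}(2^{-J-j/2}\Z\times 2^{-J}\Z)=2^{-J-j/2}\Z\times 2^{-J}\Z$ pays off: the substitution $p=S_k n$, i.e.\ $p_1=n_1+kn_2$ and $p_2=n_2$, rewrites the argument as $2^{J+j/2}x_1-n_1+k(2^{J}x_2-n_2)$, which is precisely the first coordinate of $S_k(2^{J+j/2}x_1-n_1,2^{J}x_2-n_2)$. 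Since $\phi_k(y):=\phi(S_k y)=\phi_1(y_1+ky_2)\phi_1(y_2)$ by separability of $\phi$, the sum collapses into the desired form
\[
f(S_{2^{-j/2}k}x)=\sum_{n\in\Z^2}\tilde f_J(S_k n)\,2^{J+j/4}\,\phi_k\bigl(2^{J+j/2}x_1-n_1,\,2^{J}x_2-n_2\bigr).
\]

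The hard part is not really any single estimate but rather keeping all the bookkeeping straight: verifying that the $j/2$-fold cascade of \eqref{eq:scale1} genuinely produces the trigonometric polynomial \eqref{eq:poly1} (this is where the dyadic dilates $2^k$ in the product over $k=0,\dots,j/2-1$ arise), checking that $h_{j/2}$ enters in the convolution with the correct argument $p_1-2^{j/2}m_1$ so that the upsampling identification is valid, and tracking the normalization factor $2^{j/4}$ produced by the $j/2$ applications of $\sqrt{2}$. Once these are settled, the shear step is essentially algebraic, and the lemma follows.
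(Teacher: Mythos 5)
Your proposal is correct and follows essentially the same route as the paper: iterate the refinement relation \eqref{eq:scale1} along the $x_1$-axis to land in $V^k_{J+j/2,J}$ (the paper packages this as Proposition \ref{prop:cascade}, equation \eqref{eq:across1} with $j_1=J$, $j_2=J+j/2$), identify the refined coefficients as $\tilde f_J=((f_J)_{\uparrow 2^{j/2}}*_1 h_{j/2})$, and then absorb the shear by the reindexing $p=S_kn$, which is your coordinate-level version of the paper's matrix identity $Q^{j/2}S_{2^{-j/2}k}=S_kQ^{j/2}$. The bookkeeping points you flag (the dyadic dilates in \eqref{eq:poly1}, the argument $p_1-2^{j/2}m_1$, the factor $2^{j/4}$) all check out against the paper's computation.
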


The proof of this lemma requires the following result, which follows from the cascade algorithm in the theory of wavelet.

\begin{proposition}[\cite{Lim2010}]\label{prop:cascade}
Assume that $\phi_1$ and $\psi_1 \in L^2(\R)$ satisfy equations \eqref{eq:scale1} and \eqref{eq:scale2} respectively.
For positive integers $j_1 \leq j_2$, we then have
\begin{equation}\label{eq:across1}
2^{\frac{j_1}{2}}\phi_1(2^{j_1}x_1-n_1) = \sum_{d_1 \in \Z}h_{j_2-j_1}(d_1-2^{j_2-j_1}n_1)2^{\frac{j_2}{2}}\phi_1(2^{j_2}x_1-d_1)
\end{equation}
and
\begin{equation}\label{eq:across2}
2^{\frac{j_1}{2}}\psi_1(2^{j_1}x_1-n_1) = \sum_{d_1 \in \Z}g_{j_2-j_1}(d_1-2^{j_2-j_1}n_1)2^{\frac{j_2}{2}}\phi_1(2^{j_2}x_1-d_1),
\end{equation}
where $h_j$ and $g_j$ are the Fourier coefficients of the trigonometric polynomials $H_j$ defined in \eqref{eq:poly1} and $G_j$ defined by
$$
G_{j}(\xi_1) = \Bigl(\prod_{k=0}^{j-2}\sum_{n_1 \in \Z}h(n_1) e^{-2\pi i 2^k n_1 \xi_1}\Bigr)\Bigl( \sum_{n_1 \in \Z}g(n_1) e^{-2\pi i 2^{j-1} n_1 \xi_1}\Bigr)
$$
for $j>0$ fixed.
\end{proposition}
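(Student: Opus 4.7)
The plan is to prove both \eqref{eq:across1} and \eqref{eq:across2} by induction on the difference $J := j_2 - j_1 \ge 1$, using the two-scale relations \eqref{eq:scale1} and \eqref{eq:scale2} as the inductive engine and translating the resulting spatial convolutions into products of trigonometric polynomials on the Fourier side.

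For the base case $J=1$ of \eqref{eq:across1}, a direct substitution in \eqref{eq:scale1} gives
\[
2^{j_1/2}\phi_1(2^{j_1}x_1-n_1) = \sum_{m}h(m)\,2^{(j_1+1)/2}\phi_1\bigl(2^{j_1+1}x_1-(2n_1+m)\bigr),
\]
and re-indexing $d_1 = 2n_1 + m$ yields the claimed identity with $h_1 = h$, which agrees with $H_1(\xi_1) = \sum_n h(n)e^{-2\pi i n \xi_1}$. The base case of \eqref{eq:across2} is identical with $\psi_1$ and $g$ in place of $\phi_1$ and $h$, yielding $g_1 = g$ and matching $G_1$.

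For the inductive step, I would assume \eqref{eq:across1} holds for some $J\ge 1$, apply the two-scale relation \eqref{eq:scale1} once more to each term $2^{(j_1+J)/2}\phi_1(2^{j_1+J}x_1-d_1)$, and interchange the sums to obtain
\[
2^{j_1/2}\phi_1(2^{j_1}x_1-n_1) = \sum_{e_1}\Bigl[\sum_{d_1}h_J(d_1-2^J n_1)\,h(e_1-2d_1)\Bigr]2^{(j_1+J+1)/2}\phi_1(2^{j_1+J+1}x_1-e_1).
\]
Setting $d_1' = d_1 - 2^J n_1$ and $e_1' = e_1 - 2^{J+1}n_1$, the bracketed coefficient becomes $\sum_{d_1'}h_J(d_1')h(e_1'-2d_1')$. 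A short Fourier-series computation then shows that the generating function of this convolution equals $H_J(2\xi_1)\cdot \sum_m h(m)e^{-2\pi i m\xi_1}$, which, by a reindexing of the product defining $H_J$, is precisely $H_{J+1}(\xi_1)$. Hence the inner bracket is $h_{J+1}(e_1')$, closing the induction. The identity \eqref{eq:across2} is obtained by the same scheme: start from the inductive hypothesis for $\psi_1$, apply \eqref{eq:scale1} (not \eqref{eq:scale2}) to the trailing $\phi_1$ factors, and verify via the Fourier-side identity $G_J(2\xi_1)\cdot\sum_m h(m)e^{-2\pi i m\xi_1} = G_{J+1}(\xi_1)$, which follows directly from the definitions of $H_J$ and $G_J$.

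The calculations themselves are routine; the only thing requiring real care is the index bookkeeping. Specifically, one must track (i) the shift $2^{j_2-j_1}n_1$ that propagates through the iteration, and (ii) the shift-by-$2^k$ in the $k$-th factor of $H_{j_2-j_1}$ and $G_{j_2-j_1}$, which is what the factor $H_J(2\xi_1)$ (rather than $H_J(\xi_1)$) in the Fourier-side verification produces. Once these two are aligned, the identification of the convolved spatial coefficients with $h_{J+1}$ and $g_{J+1}$ is immediate, completing the induction.
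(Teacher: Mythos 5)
Your proof is correct: the induction on $J=j_2-j_1$, iterating the refinement relations \eqref{eq:scale1}--\eqref{eq:scale2} and identifying the convolved coefficients via $H_J(2\xi_1)\sum_m h(m)e^{-2\pi i m\xi_1}=H_{J+1}(\xi_1)$ (and likewise for $G_J$), is exactly the cascade-algorithm argument the paper invokes, and the index bookkeeping ($d_1'=d_1-2^Jn_1$, $e_1'=e_1-2^{J+1}n_1$) checks out. The paper itself gives no proof, merely attributing the proposition to \cite{Lim2010} and to the cascade algorithm, so your argument supplies precisely the intended justification.
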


\begin{proof}[Proof of Lemma \ref{lemm:rewriting}]
Equation \eqref{eq:across1} with $j_1 = J$ and $j_2 = J+j/2$ implies that
\begin{equation}\label{eq:refine}
2^{J/2}\phi_1(2^Jx_1-n_1) = \sum_{d_1 \in \Z} h_{J-j/2}(d_1-2^{j/2}n_1)2^{J/2+j/4}\phi_1(2^{J+j/2}x_1-d_1).
\end{equation}
Also, since $\phi$ is a 2D separable function of the form $\phi(x_1,x_2) = \phi_1(x_1)\phi_1(x_2)$, we have that
\[
f(x) = \sum_{n_2 \in \Z}\Bigl(\sum_{n_1 \in \Z} f_J(n_1,n_2) 2^{J/2}\phi_1(2^Jx_1-n_1)\Bigr) 2^{J/2}\phi_1(2^Jx_2-n_2).
\]
By \eqref{eq:refine}, we obtain
\[
f(x) = \sum_{n \in \Z^2}\tilde{f}_J(n)2^{J+j/4}\phi(2^JQ^{j/2}x-n),
\]
where $Q = \text{diag}(2,1)$. Using $Q^{j/2}S_{2^{-j/2}k} = S_{k}Q^{j/2}$, this finally implies
\begin{eqnarray*}
f(S_{2^{-j/2}k}(x)) &=& \sum_{n \in \Z^2} \tilde{f}_J(n)2^{J+j/4}\phi(2^JQ^{j/2}S_{2^{-j/2}k}(x)-n) \\
&=& \sum_{n \in \Z^2} \tilde{f}_J(n)2^{J+j/4}\phi(S_k(2^JQ^{j/2}x-S_{-k}n)) \\
&=& \sum_{n \in \Z^2} \tilde{f}_J(S_kn)2^{J+j/4}\phi(S_k(2^JQ^{j/2}x-n)).
\end{eqnarray*}
The lemma is proved. \qed
\end{proof}

The second term to be digitized in \eqref{eq:code1} is the shearlet $\psi_{j,k,m}$ itself. A direct
corollary from Proposition \ref{prop:cascade} is the following result.

\begin{lemma}
\label{lemm:rewriting2}
Retaining the notations and definitions from this subsection, we obtain
\[
\psi_{j,k,m}(x) = \sum_{d \in \Z^2} g_{J-j}(d_1-2^{J-j}m_1)h_{J-j/2}(d_2-2^{J-j/2}m_2)2^{J+j/4}{\phi(2^Jx-d).}
\]
\end{lemma}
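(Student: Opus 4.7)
The plan is to prove this statement as an essentially immediate tensor-product application of Proposition~\ref{prop:cascade}. The key observation is that, in contrast to Lemma~\ref{lemm:rewriting} (where the shear has to be carefully pushed through a refined grid), here the shearlet itself carries no shear operator in its analytic expression — the shear was already absorbed into the data side of the inner product in \eqref{eq:code1} — so $\psi_{j,k,m}$ factors cleanly as a tensor product, and both tensor factors fall into the exact form addressed by Proposition~\ref{prop:cascade}.

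First I would rewrite $\psi_{j,k,m}$ using the separable structure of the generator \eqref{eq:generatorHere}. Because the shearlet on the right-hand side of \eqref{eq:code1} is $\psi_{j,0,m}$ (the shear was already transferred to $f$), the shearlet reduces to the pure anisotropic-dilation-plus-translation form
\[
\psi_{j,k,m}(x) \;=\; c_j \cdot \psi_1(2^{j}x_1 - m_1)\,\phi_1(2^{j/2} x_2 - m_2),
\]
where $c_j$ is the normalization constant fixed by the conventions of Subsection~\ref{subsubsec:digitizationOfCSST}. This is the crucial structural reduction: the $x_1$-direction carries the wavelet $\psi_1$ at scale $2^j$, the $x_2$-direction carries the scaling function $\phi_1$ at scale $2^{j/2}$, and the two directions are decoupled.

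Next I would apply Proposition~\ref{prop:cascade} separately to each tensor factor. To the $x_1$-factor I apply \eqref{eq:across2} with $j_1 = j$ and $j_2 = J$, obtaining
\[
\psi_1(2^{j}x_1 - m_1) \;=\; \sum_{d_1 \in \Z} g_{J-j}(d_1 - 2^{J-j} m_1)\, 2^{(J-j)/2}\, \phi_1(2^{J} x_1 - d_1),
\]
which refines the wavelet at scale $j$ to the finer scaling grid at scale $J$. To the $x_2$-factor I apply \eqref{eq:across1} with $j_1 = j/2$ and $j_2 = J$, obtaining an analogous expansion in $d_2$ with weights $h_{J-j/2}(d_2 - 2^{J-j/2} m_2)$ and a factor $2^{(J-j/2)/2}$. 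Taking the tensor product of these two identities, using $\phi(2^J x - d) = \phi_1(2^J x_1 - d_1)\,\phi_1(2^J x_2 - d_2)$, and collecting all powers of $2$ together with the constant $c_j$ yields the claimed expansion.

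The only nontrivial bookkeeping is the exponent: the combined cascade yields a factor $2^{(J-j)/2 + (J-j/2)/2} = 2^{J - 3j/4}$, which has to combine with $c_j$ to produce exactly $2^{J+j/4}$ as stated. So the one thing worth double-checking is that the normalization $c_j$ adopted in this subsection is indeed the one that makes this work out; given the parallel normalization $2^{J+j/4}$ that already appears in Lemma~\ref{lemm:rewriting}, this is consistent and not an actual obstacle. No issue arises from the shearing, convergence, or interchange of sums: each 1D cascade is a finite sum whenever $h$ and $g$ have finite support, so the tensor product is a finite double sum at every point, and the argument is purely algebraic.
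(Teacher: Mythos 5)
Your proof is correct and is precisely the argument the paper intends --- the paper presents the lemma as a direct corollary of Proposition~\ref{prop:cascade}, obtained exactly as you do by factoring $\psi_{j,0,m}$ into its $x_1$- and $x_2$-tensor factors and applying \eqref{eq:across2} (with $j_1=j$, $j_2=J$) and \eqref{eq:across1} (with $j_1=j/2$, $j_2=J$) respectively. The normalization check you flag is indeed the only delicate point, and the mismatch you would find there ($2^{3j/4}\cdot 2^{J-3j/4}=2^{J}$ versus the stated $2^{J+j/4}$) traces to the paper's own inconsistent normalization conventions in this subsection rather than to any gap in your argument.
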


As already indicated before, we will make use of the discrete separable wavelet transform associated with an anisotropic scaling matrix,
which, for $j_1$ and $j_2>0$ as well as $c \in \ell(\Z^2)$, we define by
\begin{equation}\label{eq:wavelet}
W_{j_1,j_2}(c)(n_1,n_2) = \sum_{m \in \Z^2}g_{j_1}(m_1-2^{j_1}n_1)h_{j_2}(m_2-2^{j_2}n_2)c(m_1,m_2), \quad  (n_1,n_2) \in \Z^2.
\end{equation}

Finally, Lemmata \ref{lemm:rewriting} and \ref{lemm:rewriting2} yield the following digitizable form of the shearlet coefficients $\langle f,\psi_{j,k,m}\rangle$.

\begin{theorem}[\cite{Lim2010}]\label{theo:Lim}
Retaining the notations and definitions from this subsection, and letting$\downarrow 2^{j/2}$
be 1D downsampling by a factor of $2^{j/2}$ along the horizontal axis, we obtain
\[
\langle f,\psi_{j,k,m} \rangle = W_{J-j,J-j/2}\Bigl( \Bigl((\tilde{f}_J(S_k\cdot)*\Phi_k) *_1 \overline{h}_{j/2} \Bigr)_{\downarrow 2^{j/2}}\Bigr)(m),
\]
where $\Phi_k(n) = \langle \phi(S_k(\cdot)), \phi(\cdot-n)\rangle$ for $n \in \Z^2$, and $\overline{h}_{j/2}(n_1) = h_{j/2}(-n_1)$.
\end{theorem}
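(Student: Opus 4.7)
The plan is to rewrite both factors of the inner product in a common anisotropically refined scaling basis and then read off the resulting expression as the advertised composition of digital operations. First I would use $|\det S_{2^{-j/2}k}|=1$ together with \eqref{eq:code1} to write $\langle f,\psi_{j,k,m}\rangle=\langle f(S_{2^{-j/2}k}(\cdot)),\psi_{j,0,m}(\cdot)\rangle$, so that shearing is transferred entirely onto the data. Lemma \ref{lemm:rewriting} then expands the left factor in the sheared refined basis $\{\phi(S_k(2^{J+j/2}x_1-n_1,2^Jx_2-n_2))\}_n$ with coefficients $\tilde{f}_J(S_kn)$, and Lemma \ref{lemm:rewriting2} specialised to $k=0$ expands the right factor in the isotropic basis $\{\phi(2^Jx-d)\}_d$ with coefficients $g_{J-j}(d_1-2^{J-j}m_1)h_{J-j/2}(d_2-2^{J-j/2}m_2)$.

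To match the two grids I would then refine $\phi(2^Jx-d)$ to the anisotropic resolution $(J+j/2,J)$ by invoking the 1D cascade identity \eqref{eq:across1} of Proposition \ref{prop:cascade} with $j_1=J$, $j_2=J+j/2$ applied only in the $x_1$ variable. This introduces the extra filter factor $h_{j/2}(e_1-2^{j/2}d_1)$ and rewrites $\psi_{j,0,m}$ as a triple sum over $(d_1,d_2,e_1)$ on the same anisotropic grid that carries $f(S_{2^{-j/2}k}(\cdot))$.

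With both factors on a common grid, I would substitute the expansions into the inner product and perform the change of variable $u=(2^{J+j/2}x_1,2^Jx_2)$. The resulting integral decouples into the basic cross-pairing
\[
\int\phi(S_k(u-n))\,\overline{\phi(u-e)}\,du=\Phi_k(e-n),
\]
which is exactly the definition of $\Phi_k$ given in the theorem; the dyadic Jacobian then recombines cleanly with the normalisations from Lemmata \ref{lemm:rewriting}--\ref{lemm:rewriting2} and from the refinement step.

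What remains is to recognise the iterated discrete sum as the claimed composition. The sum over $n$ against $\Phi_k(e-n)$ is the 2D convolution $\tilde{f}_J(S_k\cdot)*\Phi_k$; the sum over $e_1$ against $h_{j/2}(e_1-2^{j/2}d_1)$ is convolution along $x_1$ by $\overline{h}_{j/2}$ evaluated at the downsampled location $2^{j/2}d_1$, i.e.\ the operator $*_1\overline{h}_{j/2}$ followed by $\downarrow 2^{j/2}$; and the final sum over $d$ against $g_{J-j}(d_1-2^{J-j}m_1)h_{J-j/2}(d_2-2^{J-j/2}m_2)$ is, by \eqref{eq:wavelet}, precisely $W_{J-j,J-j/2}$ evaluated at $m$. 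The main obstacle will be the grid bookkeeping — keeping track of where the shear $S_k$ sits (on coefficients versus basis), ensuring the 1D refinement is applied in the correct direction, and verifying that the dyadic normalisation constants produced by the two cascade lemmas, the refinement, and the Jacobian cancel exactly so that the formula emerges without any stray prefactor.
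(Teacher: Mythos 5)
Your proposal is correct and follows exactly the route the paper intends: the paper itself gives no explicit proof of Theorem~\ref{theo:Lim} beyond asserting that Lemmata~\ref{lemm:rewriting} and~\ref{lemm:rewriting2} yield it, and your argument is precisely the missing combination — substitute both expansions into $\langle f(S_{2^{-j/2}k}(\cdot)),\psi_{j,0,m}(\cdot)\rangle$, bridge the mismatched $x_1$-resolutions with \eqref{eq:across1}, identify the cross-Gramian as $\Phi_k$, and read off the sums over $n$, $e_1$, and $d$ as $*\,\Phi_k$, $*_1\overline{h}_{j/2}$ with $\downarrow 2^{j/2}$, and $W_{J-j,J-j/2}$ respectively. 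Your closing caveat about tracking the dyadic normalisations is well placed, but the structure of the argument is exactly right.
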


\subsubsection{Algorithmic Realization}
\label{subsubsec:algorithm}

Computing the shearlet coefficients using Theorem \ref{theo:Lim} now restricts to applying the discrete separable wavelet transform \eqref{eq:wavelet}
associated with the sampling matrix $A_{2^j}$ to the scaling coefficients
\begin{equation}\label{eq:dshear}
S^d_{2^{-j/2}k}(f_J)(n) := \Bigl((\tilde{f}_J(S_k\cdot)*\Phi_k) *_1 \overline{h}_{j/2} \Bigr)_{\downarrow 2^{j/2}}(n) \quad \text{for} \quad f_J \in \ell^2(\Z^2).
\end{equation}
Before we state the explicit steps necessary to achieve this, let us take a closer look at the scaling coefficients $S^d_{2^{-j/2}k}(f_J)$, which
can be regarded as a new sampling of the data $f_J$ on the integer grid $\Z^2$ by the digital shear operator $S^d_{2^{-j/2}k}$. This procedure is
illustrated in Fig. \ref{fig:grid} in the case $2^{-j/2}k = -1/4$.
\begin{figure}[t]
\includegraphics[width=1.5in]{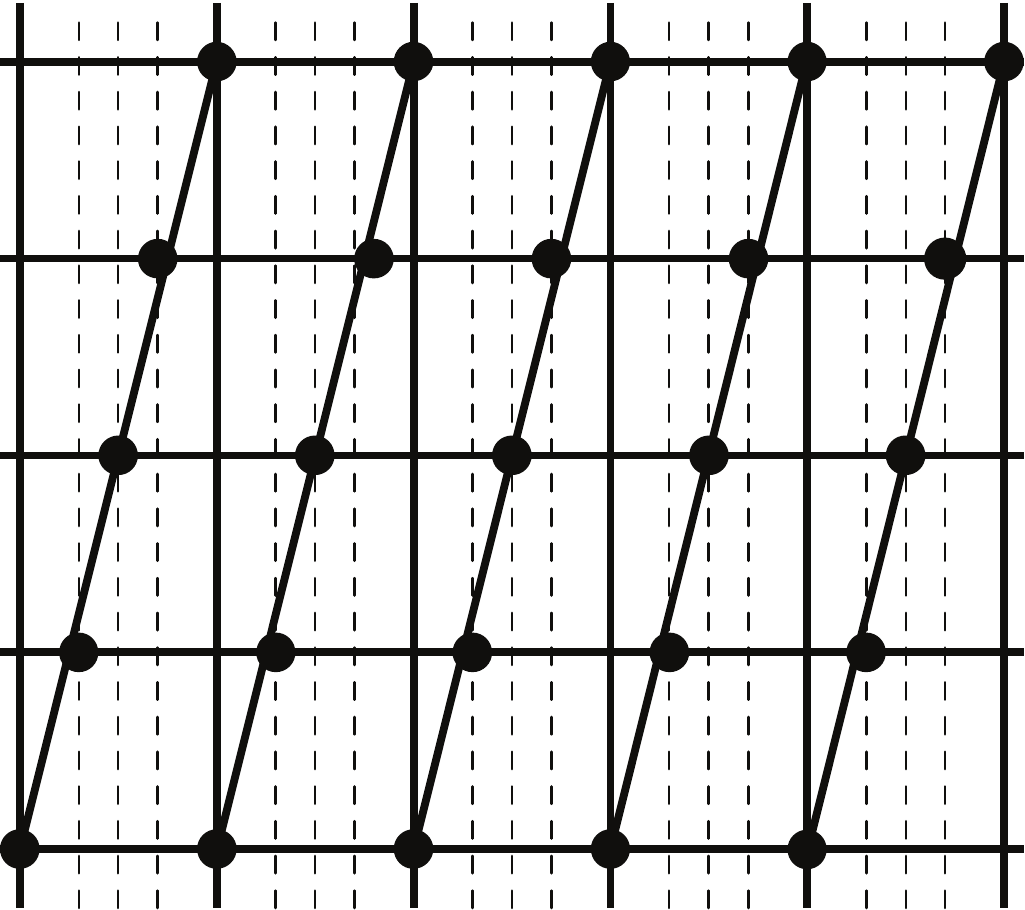}
\caption{Illustration of application of the digital shear operator $S^d_{-1/4}$: The dashed lines correspond to the refinement of the integer
grid. The new sample values lie on the intersections of the sheared lines associated with $S_{1/4}$ with this refined grid. }
\label{fig:grid}
\end{figure}

Let us also mention that the filter coefficients $\Phi_k(n)$ in \eqref{eq:dshear} can in fact be easily precomputed for each shear parameter $k$. For a
practical implementation, one may sometimes even skip this additional convolution step assuming that $\Phi_k = \chi_{(0,0)}$.

\bigskip

Concluding, the implementation strategy for the DSST cascades the following steps:
\begin{itemize}
\item {\bf Step 1:} For given input data $f_J$, apply the 1D upsampling operator by a factor of $2^{j/2}$ at the finest scale $j = J$.
\item {\bf Step 2:} Apply 1D convolution to the upsampled input data $f_J$ with 1D lowpass filter $h_{j/2}$ at the finest scale $j = J$.
This gives $\tilde{f}_J$.
\item {\bf Step 3:} Resample $\tilde{f}_J$ to obtain $\tilde{f}_J(S_k(n))$ according to the shear sampling matrix $S_k$ at the finest scale $j = J$.
Note that this resampling step is straightforward, since the integer grid is invariant under the shear matrix $S_k$.
\item {\bf Step 4:} Apply 1D convolution to $\tilde{f}_J(S_k(n))$ with $\overline{h}_{j/2}$ followed by 1D downsampling by a factor of $2^{j/2}$ at
the finest scale $j = J$.
\item {\bf Step 5:} Apply the separable wavelet transform $W_{J-j,J-j/2}$ across scales $j = 0,1,\dots,J-1$.
\end{itemize}

\subsubsection{Digital Realization of Directionality}
\label{subsec:direction}

Since the digital realization of a shear matrix $S_{2^{-j/2}k}$ by the digital shear operator $S^d_{2^{-j/2}k}$ is crucial for deriving a
faithful digitization of the continuum domain shearlet transform, we will devote this subsection to a closer analysis.

We start by remarking that in fact in the continuum domain, at least {\em two} operators exist which naturally provide directionality: Rotation and
shearing. Rotation is a very convenient tool to provide directionality in the sense that it preserves important geometric information such
as length, angles, and parallelism. However, this operator does not preserve the integer lattice, which causes severe problems for digitization.
In contrast to this, a shear matrix $S_k$ does not only provide directionality, but also preserves the integer lattice when the shear parameter $k$
is integer. Thus, it is conceivable to assume that directionality can be naturally discretized by using a shear matrix $S_k$.

To start our analysis of the relation between a shear matrix $S_{2^{-j/2}k}$ and the associated digital shear operator $S^d_{2^{-j/2}k}$,
let us consider the following simple example: Set $f_c = \chi_{\{x : x_1 = 0\}}$. Then digitize $f_c$ to obtain a function $f_d$ defined
on $\Z^2$ by setting $f_d(n) = f_c(n)$ for all $n \in \Z^2$. For fixed shear parameter $s \in \R$, apply the shear transform $S_s$ to $f_c$
yielding the sheared function $f_c(S_s( \cdot ))$. Next, digitize also this function by considering $f_c(S_s( \cdot ))|_{\Z^2}$.
The functions $f_d$ and $f_c(S_s( \cdot ))|_{\Z^2}$ are illustrated in Fig.~\ref{fig:lines} for $s = -1/4$.
\begin{figure}[h]
\begin{center}
\includegraphics[height=1.0in]{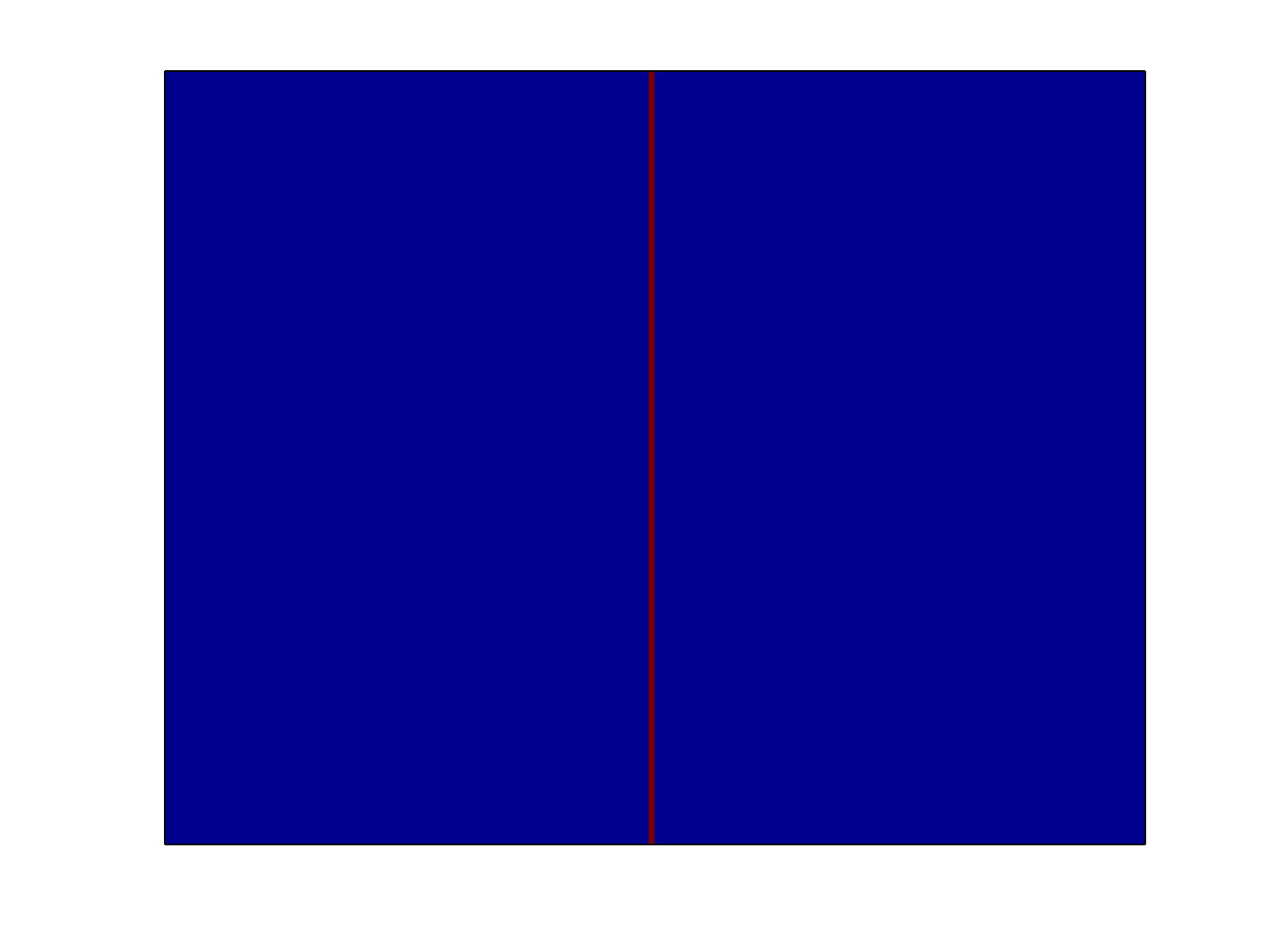}
\hspace*{1cm}
\includegraphics[height=1.0in]{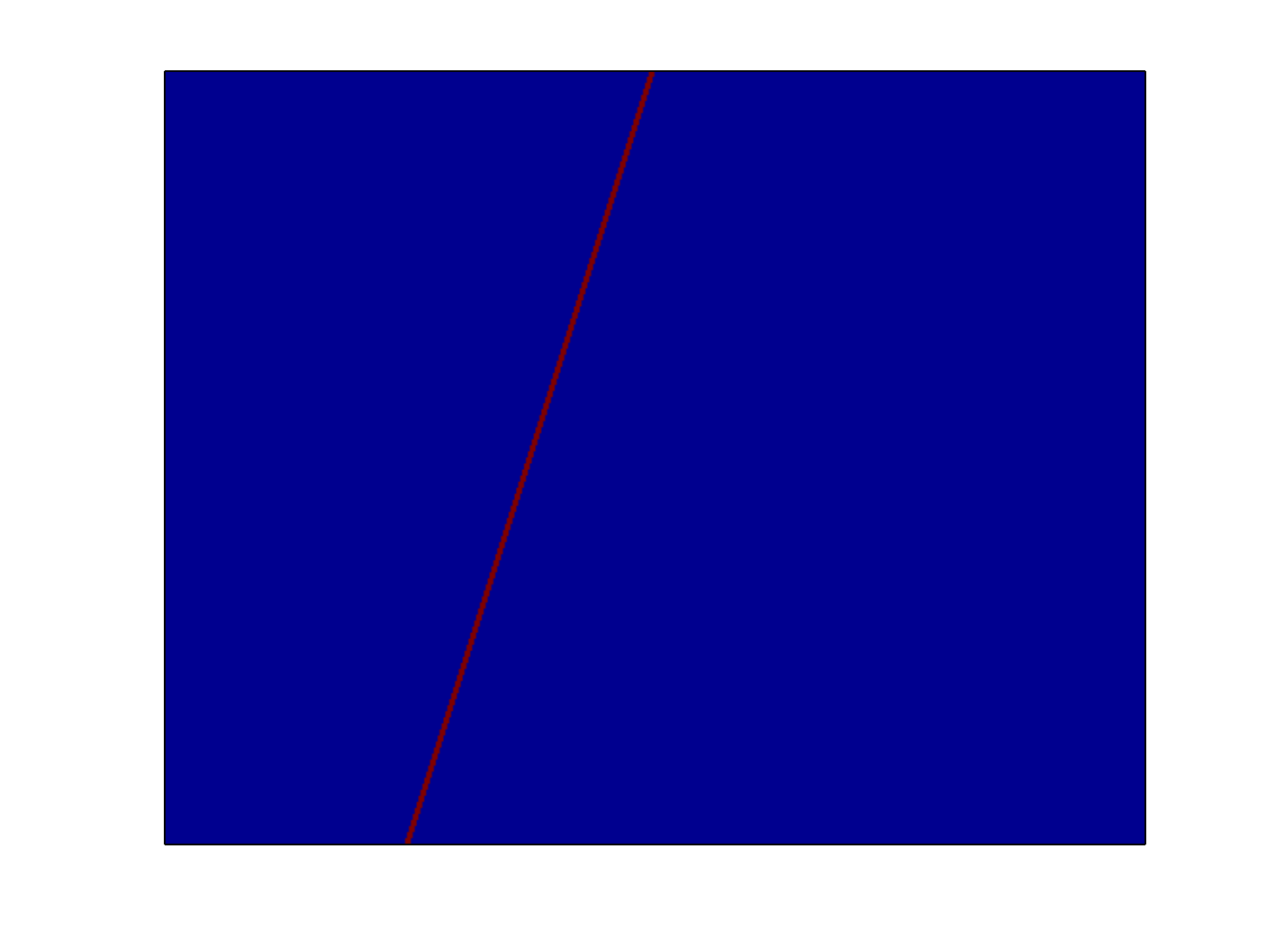}
\put(-180,0){(a)}
\put(-50,0){(b)}
\caption{(a) Original image $f_d(n)$. (b) Sheared image $f_c(S_{-1/4}n)$.}\label{fig:lines}
\end{center}
\end{figure}
We now focus on the problem that the integer lattice is not invariant under the shear matrix $S_{1/4}$. This prevents the sampling points $S_{1/4}(n)$,
$n \in \Z^2$ from lying  on the integer grid, which causes aliasing of the digitized image $f_c(S_{-1/4}( \cdot ))|_{\Z^2}$ as illustrated in
Fig.~\ref{fig:aliase}(a). In order to avoid this aliasing effect, the grid needs to be refined by a factor of 4 along the horizontal axis
followed by computing sample values on this refined grid.

More generally, when the shear parameter is given by $s = -2^{-j/2}k$, one can essentially avoid this directional aliasing effect by
refining a grid by a factor of $2^{j/2}$ along the horizontal axis followed by computing interpolated sample values on this refined grid.
This ensures that the resulting grid contains the sampling points $((2^{-j/2}k)n_2,n_2)$ for any $n_2 \in \Z$ and is
preserved by the shear matrix $S_{-2^{-j/2}k}$.
This procedure precisely coincides with the application of the digital shear operator $S^d_{2^{-j/2}k}$, i.e., we just described Steps 1 -- 4 from
Subsection \ref{subsubsec:algorithm} in which the new scaling coefficients $S^d_{2^{-j/2}k}({f}_J)(n)$ are computed.

Let us come back to the exemplary situation of $f_c = \chi_{\{x : x_1 = 0\}}$ and $S_{-1/4}$ we started our excursion with and compare
$f_c(S_{-1/4}( \cdot ))|_{\Z^2}$ with $S^d_{-1/4}(f_d)|_{\Z^2}$ obtained by applying the digital shear operator $S^d_{-1/4}$ to $f_d$.
And, in fact, the directional aliasing effect on the digitized image $f_c(S_{-1/4}(n))$ in frequency illustrated in Fig.~\ref{fig:aliase}(a)
is shown to be avoided in Fig.~\ref{fig:aliase} (b)-(c) by considering $S^d_{-1/4}(f_d)|_{\Z^2}$.
\begin{figure}[h]
\begin{center}
\includegraphics[height=1.0in]{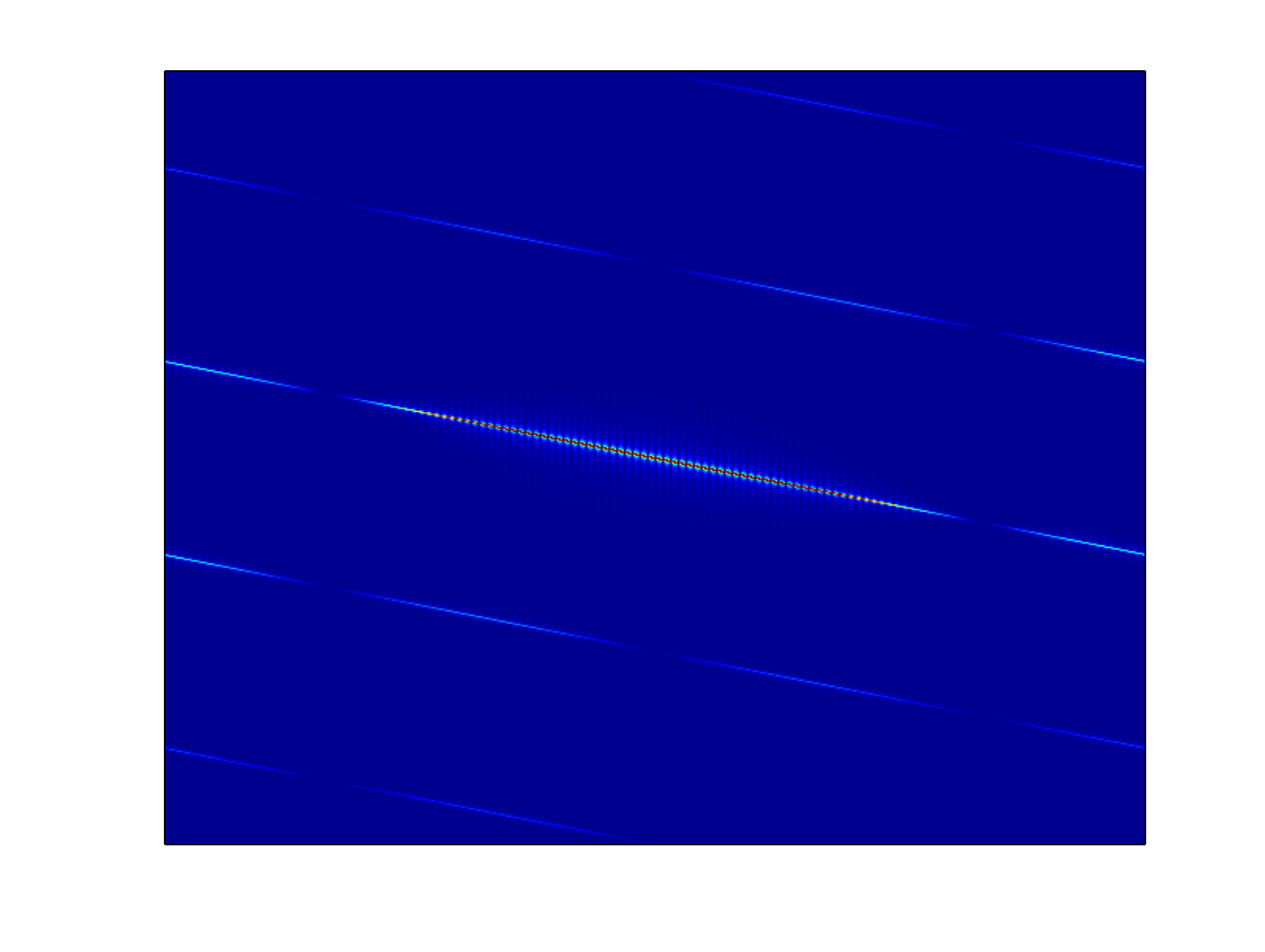}
\includegraphics[height=1.0in]{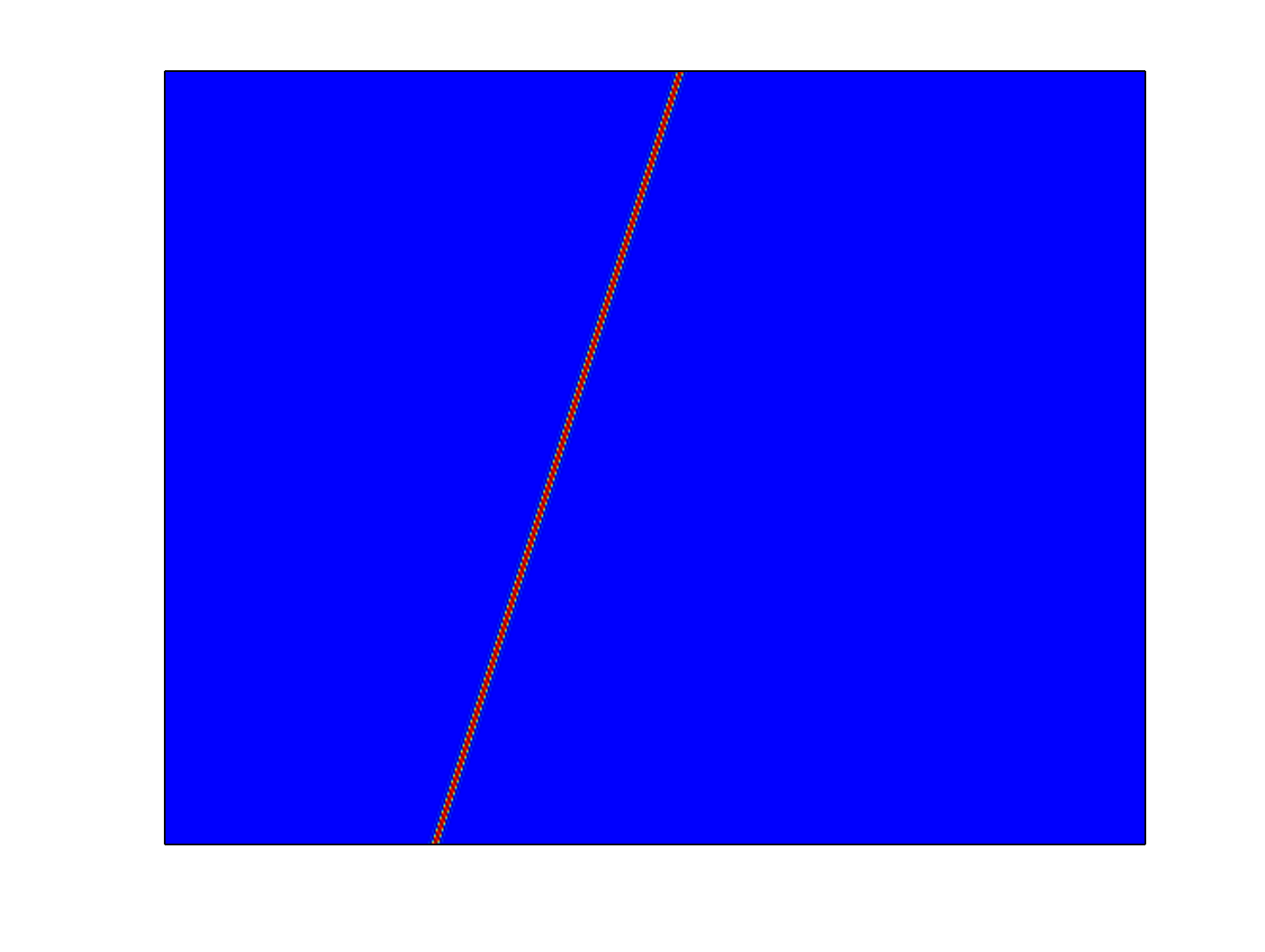}
\includegraphics[height=1.0in]{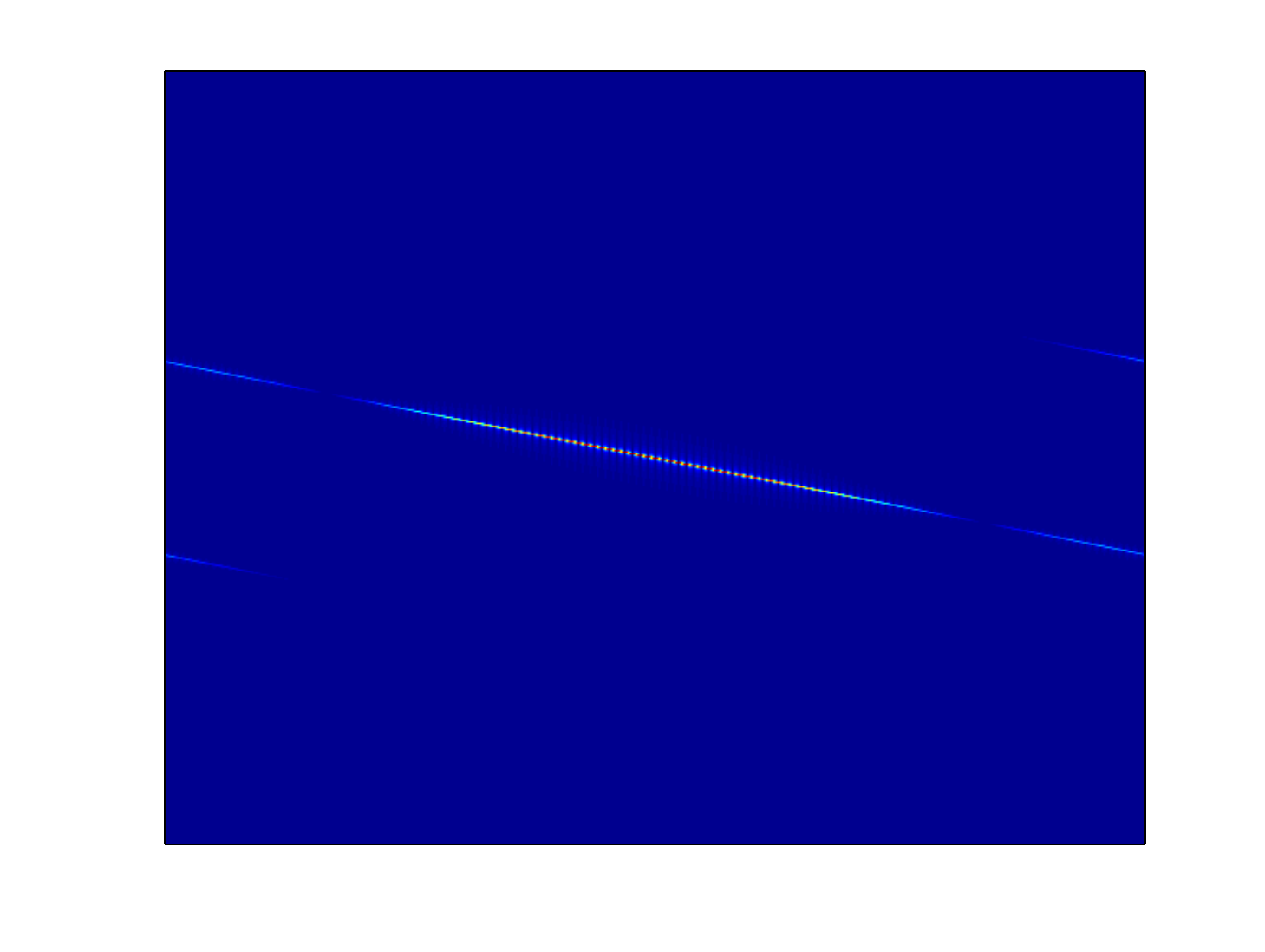}
\put(-250,0){(a)}
\put(-150,0){(b)}
\put(-50,0){(c)}
\end{center}
\caption{(a) Aliased image: DFT of ${f_c}(S_{-1/4}(n))$. (b) De-aliased image: $S^d_{-1/4}(f_d)(n)$. (c) De-aliased image: DFT of $S^d_{-1/4}(f_d)(n)$. }
\label{fig:aliase}
\end{figure}
Thus application of the digital shear operator $S^d_{2^{-j/2}k}$ allows a faithful digitization of the shearing operator associated with
the shear matrix $S_{2^{-j/2}k}$.

\subsubsection{Redundancy}
\label{subsec:redundancy}

One of the main issues which practical applicability requires is controllable redundancy.  To quantify the redundancy of the discrete
shearlet transform, we assume that the input data $f$ is a finite linear combination of translates of a 2D scaling function $\phi$ at
scale $J$ as follows:
\[
f(x) = \sum_{n_1=0}^{2^J-1}\sum_{n_2=0}^{2^J-1} d_n \phi(2^J x - n)
\]
as it was already the hypothesis in \eqref{eq:code2}. The redundancy -- as we view it in our analysis -- is then given by the number of
shearlet elements necessary to represent $f$. Furthermore, to state the result in more generality, we allow an arbitrary sampling matrix
$M_c = \text{diag}(c_1,c_2)$ for translation, i.e., consider shearlet elements of the form
\[
\psi_{j,k,m}(\cdot) = 2^{\frac{3}{4}j}\psi(S_kA_{2^j}\cdot - M_cm).
\]
We then have the following result.
\begin{proposition}[\cite{Lim2010}]
The redundancy of the DSST is
\[
\Bigl( \frac{4}{3}\Bigr)\Bigl( \frac{1}{c_1c_2}\Bigr).
\]
\end{proposition}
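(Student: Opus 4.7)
My plan is to count the total number of shearlet coefficients produced by the DSST, divide by the size of the input data $4^J$, and show the ratio tends to $\frac{4}{3(c_1 c_2)}$ as $J$ grows. The forward transform, as summarized in Subsection \ref{subsubsec:algorithm}, proceeds by fixing a cone, a scale $j\in\{0,1,\dots,J-1\}$, a shear $k$, and then applying a separable wavelet transform $W_{J-j,J-j/2}$ to the sheared sampling array $S^d_{2^{-j/2}k}(f_J)$, which is a $2^J\times 2^J$ array; these are then subsampled against the translation lattice $M_c\Z^2$. So my first step is to verify that for a single triple (cone, scale $j$, shear $k$), the effective number of spatial positions $m$ is $\frac{2^{3j/2}}{c_1 c_2}$, which follows from the anisotropic downsampling built into $W_{J-j,J-j/2}$ (factors $2^{J-j}$ and $2^{J-j/2}$ in the two coordinates) combined with the translation sampling matrix $M_c=\mathrm{diag}(c_1,c_2)$.

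Next I will count shears. At scale $j$, the shear parameter $k$ in the cone-adapted system ranges over the integers with $|k|\le \lceil 2^{j/2}\rceil$, giving $2\cdot 2^{j/2}+1\approx 2^{j/2+1}$ shears per cone; the shear-invariance of the integer lattice (for integer $k$) is what makes this counting clean, as emphasized in Subsection \ref{subsec:direction}. Multiplying by the two cones $\mathcal{C}_{\mathrm{h}},\mathcal{C}_{\mathrm{v}}$, the coefficient count at scale $j$ is then
\[
2 \cdot (2\cdot 2^{j/2}+1) \cdot \frac{2^{3j/2}}{c_1 c_2} \;\sim\; \frac{4\cdot 4^j}{c_1 c_2}.
\]

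Summing the geometric series over $j=0,\dots,J-1$ gives $\sum_{j=0}^{J-1}\frac{4\cdot 4^j}{c_1c_2} = \frac{4(4^J-1)}{3\,c_1c_2}$, and adding the negligible contribution of the low-pass scaling coefficients (a single copy of order $O(1)$ or at most one coarsest-scale grid, which is dominated) and dividing by the $4^J$ input samples yields redundancy $\frac{4}{3}\cdot\frac{1}{c_1 c_2}$ in the limit $J\to\infty$. The only step requiring some care is the first one: pinning down that the per-shear count is exactly $2^{3j/2}/(c_1c_2)$ and not off by boundary corrections from the upsampling/filtering/downsampling cascade in \eqref{eq:dshear}. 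This is where I expect to need to check that the digital shear $S^d_{2^{-j/2}k}$ preserves the cardinality of the lattice (which is exactly the point of the refinement in Subsection \ref{subsec:direction}) so that after $W_{J-j,J-j/2}$ the output lattice really has $2^j\cdot 2^{j/2}$ points before the $M_c$-subsampling is imposed; once that is verified, the rest is arithmetic.
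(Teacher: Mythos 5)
Your proposal is correct and follows essentially the same route as the paper's own proof: counting $2\cdot 2^{j/2}+1 \approx 2^{j/2+1}$ shears and $2^{3j/2}/(c_1c_2)$ translations per cone at scale $j$, doubling for the two cones, summing the geometric series $\sum_j 4^j$, treating the coarse-scale contribution as negligible, and dividing by the $4^J$ input samples before letting $J\to\infty$. The only difference is cosmetic -- you flag the possible boundary corrections in the upsampling/filtering/downsampling cascade explicitly, whereas the paper simply asserts the per-shear count.
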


\begin{proof}
For this, we first consider shearlet elements for the horizontal cone for a fixed scale $j \in \{0, \dots, J-1\}$. We observe that
there exist $2^{j/2+1}$ shearing indices $k$ and  $2^j \cdot 2^{j/2} \cdot (c_1c_2)^{-1}$ translation indices associated with the
scaling matrix $A_{2^j}$ and the sampling matrix $M_c$, respectively. Thus, $2^{2j+1}(c_1c_2)^{-1}$ shearlet elements from the horizontal
cone are required for representing $f$. Due to symmetry reasons, we require the same number of shearlet elements from the vertical cone.
Finally, about $c_1^{-2}$ translates of the scaling function $\phi$ are necessary at the coarsest scale $j = 0$.

Summarizing, the total number of necessary shearlet elements across all scales is about
\[
\Bigl(\frac{4}{c_1c_2}\Bigr)\Bigl(\sum_{j=0}^{J-1}2^{2j}+1\Bigr) = \Bigl( \frac{4}{c_1c_2} \Bigr)\Bigl( \frac{2^{2J}+2}{3} \Bigr)
\]
The redundancy of each shearlet frame can now be computed as the ratio of the number of coefficients $d_n$ and this number.
Letting  $J \rightarrow \infty$ proves the claim. \qed
\end{proof}

As an example, choose a translation grid with parameters $c_1 = 1$ and $c_2 = 0.4$. Then the associated DSST has asymptotic redundancy $10/3$.

\subsubsection{Computational Complexity}

A further essential characteristics is the computational complexity (see also Subsection \ref{subsec:speed}), which we now
formally compute for the discrete shearlet transform.

\begin{proposition}[\cite{KKL2010}]
\label{prop:running}
The computational complexity of the DSST is
\[
O(2^{\log_2(1/2(L/2-1))}L\cdot N).
\]
\end{proposition}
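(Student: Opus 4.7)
The plan is to account operation counts separately for each of the five steps of the DSST as described in Subsection~\ref{subsubsec:algorithm}, and then sum the contributions over all scales $j \in \{0,\ldots,J-1\}$ and all admissible shears $k$ with $|k|\le 2^{j/2}$, where $N$ is the total image size (so $N = 2^J$ in one direction in the notation of the paper).

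First, I would identify the effective sizes of the data and of the filters that appear at scale $j$. After the 1D upsampling by $2^{j/2}$ in Step~1, the array passed on to Step~2 has size $2^{j/2}N \times N$ (assuming a square input), and after the downsampling in Step~4 the input to the separable wavelet transform in Step~5 is restored to size $N\times N$. The quantitatively crucial point is the length of the composite lowpass filter $h_{j/2}$: because of the product structure
\[
H_{j/2}(\xi_1) \;=\; \prod_{k=0}^{j/2-1} \sum_{n_1 \in \Z} h(n_1)\, e^{-2\pi i \cdot 2^k n_1 \xi_1}
\]
from \eqref{eq:poly1}, $h_{j/2}$ is a cascade of $j/2$ dyadically dilated copies of the base filter $h$ (length $L$), so its support has length at most $\sum_{k=0}^{j/2-1} L\cdot 2^k = L(2^{j/2}-1)$. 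Consequently, a single 1D convolution with $h_{j/2}$ along a row of length $2^{j/2}N$ requires $O\!\left(L(2^{j/2}-1)\cdot 2^{j/2}N\right)$ flops, and applying it to all $N$ rows gives $O\!\left(L(2^{j/2}-1)\cdot 2^{j}\,N^2 / 2^{j/2}\right)$ operations per shear per scale (the downsampling in Step~4 only affects the placement of outputs, not the asymptotic count).

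Next, I would add the cost of the separable wavelet transform in Step~5, which by the usual cascade-algorithm argument is $O(L\cdot N^2)$ per scale and per shear (dominated in fact by the coarsest levels). Summing over the $O(2^{j/2})$ admissible shear parameters at scale $j$ and over $j = 0, 1, \ldots, J-1$, the contributions form a geometric series in $j$ whose top term (at $j$ close to $J$) dominates, yielding the bound announced in the proposition; the resampling in Step~3 is free since $S_k(\Z^2)=\Z^2$, and the precomputed convolution with $\Phi_k$ from Theorem~\ref{theo:Lim} contributes only a lower-order term (and can in practice even be skipped by setting $\Phi_k=\chi_{(0,0)}$).

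The main obstacle in this plan is the correct bookkeeping of the effective filter length of $h_{j/2}$: the naive accounting (treating $h_{j/2}$ as having the fixed length $L$ of the base filter) underestimates the cost, while crude use of convolution-support addition can overshoot. One must use the cascade identity \eqref{eq:across1} together with the dyadic dilation structure in \eqref{eq:poly1} to see that the true effective length is $L(2^{j/2}-1)$, and then carefully balance this against the $2^{j/2}$ shears and the $2^{j/2}$ row-expansion from upsampling before performing the geometric summation over $j$.
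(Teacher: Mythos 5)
There is a genuine gap, and it centers on what the symbol $L$ means in the statement. In Proposition~\ref{prop:running} and in the paper's proof, $L$ is the \emph{total number of shear directions at the finest scale} $j=J$; the whole point of the paper's argument is the final substitution $L = 2(2\cdot 2^{j/2}+1)$, which converts the cost $O(2^{j/2}L\cdot N)$ of Steps 1--4 at the finest scale (with $N$ the size of the 2D input data, i.e.\ the total number of samples) into the stated form, since $2^{j/2} = \tfrac12(L/2-1) = 2^{\log_2(1/2(L/2-1))}$. You instead use $L$ for the length of the base filter $h$, so the quantity $2^{\log_2(1/2(L/2-1))}$ never acquires a meaning in your accounting, and the announced bound cannot be reached from your setup. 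You also never use the count of admissible shears to close the argument, which is the one step the paper's proof actually hinges on.

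Beyond the notational collision, your operation count overshoots the claimed complexity. You charge the convolution with $h_{j/2}$ at its full dense support length $\ell_h(2^{j/2}-1)$ per output sample on a row of length $2^{j/2}N_{\mathrm{side}}$, giving $O(2^{j})$ work per sample per shear; after summing over the $O(2^{j/2})$ shears this yields $O(2^{3j/2})$ times the number of pixels at the top scale, whereas the proposition asserts $O(2^{j/2}\cdot L)=O(2^{j})$ times the number of pixels. The discrepancy is resolved by observing that $H_{j/2}$ in \eqref{eq:poly1} is a product of $j/2$ dyadically upsampled copies of $\hat h$, so the convolution is implemented as a cascade of $j/2$ filters each with only $O(1)$ nonzero taps (cost $O(j)$ per output sample, with the logarithmic factor suppressed), rather than as one dense convolution of length $\ell_h(2^{j/2}-1)$. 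The paper simply takes the per-direction cost of Steps 1--4 to be linear in the upsampled data size $2^{j/2}N$ and notes that Step 5 is $O(N)$ and negligible; your ``careful bookkeeping'' of the cascade filter's support, combined with the geometric summation you propose, does not produce the bound in the statement.
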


\begin{proof}
Analyzing Steps 1 -- 5 from Subsection \ref{subsubsec:algorithm}, we observe that the most time consuming step is the computation
of the scaling coefficients in Steps 1 -- 4 for the finest scale $j = J$. This step requires 1D upsampling by a factor of $2^{j/2}$
followed by 1D convolution for each direction associated with the shear parameter $k$. Letting $L$ denote the total number of
directions at the finest scale $j = J$, and $N$ the size of 2D input data, the computational complexity for computing the scaling
coefficients in Steps 1 -- 4 is $O(2^{j/2}L\cdot N)$.
The complexity of the discrete separable wavelet transform associated with $A_{2^j}$ for Step 5 requires $O(N)$ operations,
wherefore it is negligible. The claim follows from the fact that $L = 2(2\cdot2^{j/2}+1)$. \qed
\end{proof}

It should be noted that the total computational cost depends on the number $L$ of shear parameters at the finest scale $j = J$, and
this total cost grows approximately by a factor of $L^2$ as $L$ is increased. It should though be emphasized that $L$ can be chosen
in such a way that this shearlet transform is favorably comparable to other redundant directional transforms with respect to running time as
well as performance. A reasonable number of directions at the finest scale is $6$, in which case the constant factor $2^{\log_2(1/2(L/2-1))}$
in Proposition \ref{prop:running} equals $1$. Hence in this case the running time of this shearlet transform is only about 6 times slower
than the discrete orthogonal wavelet transform, thereby remains in the range of the running time of other directional transforms.

\subsubsection{Inverse DSST}

In Subsection \ref{subsubsec:tight}, we already discussed that this transform is not an isometry, wherefore the adjoint cannot be used as an inverse transform.
However, the `good' ratio of the frame bounds in the sense as detailed in Subsection \ref{subsubsec:tight} leads to a fast convergence rate of iterative methods
such as the conjugate gradient method. Let us mention that using the conjugate gradient method basically requires computing the
forward DSST and its adjoint, and we refer to \cite{M099} and also Subsection \ref{subsubsec:adjoint} for more details.

\subsection{Digital Non-Separable Shearlet Transform (DNST)}
\label{subsec:DNST}

In this section, we describe an alternative approach to derive a faithful {digitization} of a discrete shearlet transform associated with
compactly supported shearlets. This algorithmic realization, which was developed in \cite{Lim2011}, resolves the following drawbacks of the DSST:
\begin{itemize}
\item Since this transform is not based on a tight frame, an additional computational effort is necessary to approximate the
inverse of the shearlet transform by iterative methods.
\item Computing the interpolated sampling values in \eqref{eq:dshear} requires additional computational costs.
\item This shearlet transform is not shift-variant, even when downsampling associated with $A_{2^j}$ is omitted.
\end{itemize}
We emphasize that although this alternative approach resolves these problems, the algorithm DSST provides a much
more faithful {digitization} in the sense that the shearlet coefficients can be exactly computed in this framework.

The main difference between DSST and DNST will be to exploit {\em non-separable} shearlet generators, which give more flexibility.

\subsubsection{Shearlet Generators}

We start by introducing the {\em non-separable} shearlet generators utilized in DNST.
First,  
define the shearlet generator $\psi^{\text{non}}$ by
\[
{\hat{\psi}^{\text{non}} (\xi) = P(\xi_1,2\xi_2)\hat \psi(\xi)},
\]
where 
the trigonometric polynomial $P$ is
a 2D fan filter (c.f. \cite{DV05}). For an illustration of $P$ we refer to Fig. \ref{fig:nonsupp}(a).
This in turn defines shearlets $\psi^{\text{non}}_{j,k,m}$ generated by non-separable generator functions $\psi_j^{\text{non}}$
for each scale index $j \ge 0$ by setting
$$
{\psi^{\text{non}}_{j,k,m}(x) = 2^{\frac{3}{4}j}\psi^{\text{non}}(S_kA_{2^j}x-M_{c_j}m)},
$$
where $M_{c_j}$ is a sampling matrix given by $M_{c_j} = \text{diag}(c_1^j,c_2^j)$ and
$c^j_1$ and $c^j_2$ are sampling constants for translation.

One major advantage of these shearlets $\psi^{\text{non}}_{j,k,m}$ is the fact that a fan filter enables refinement of the
directional selectivity in frequency domain at each scale. Fig. \ref{fig:nonsupp}(a)-(b) show the refined essential support of
$\hat{\psi}^{\text{non}}_{j,k,m}$ as compared to shearlets $\psi_{j,k,m}$ arising from a separable generator as in
Subsection \ref{subsubsec:digitizationOfCSST}.
\begin{figure}[h]
\begin{center}
\includegraphics[width=1.5in]{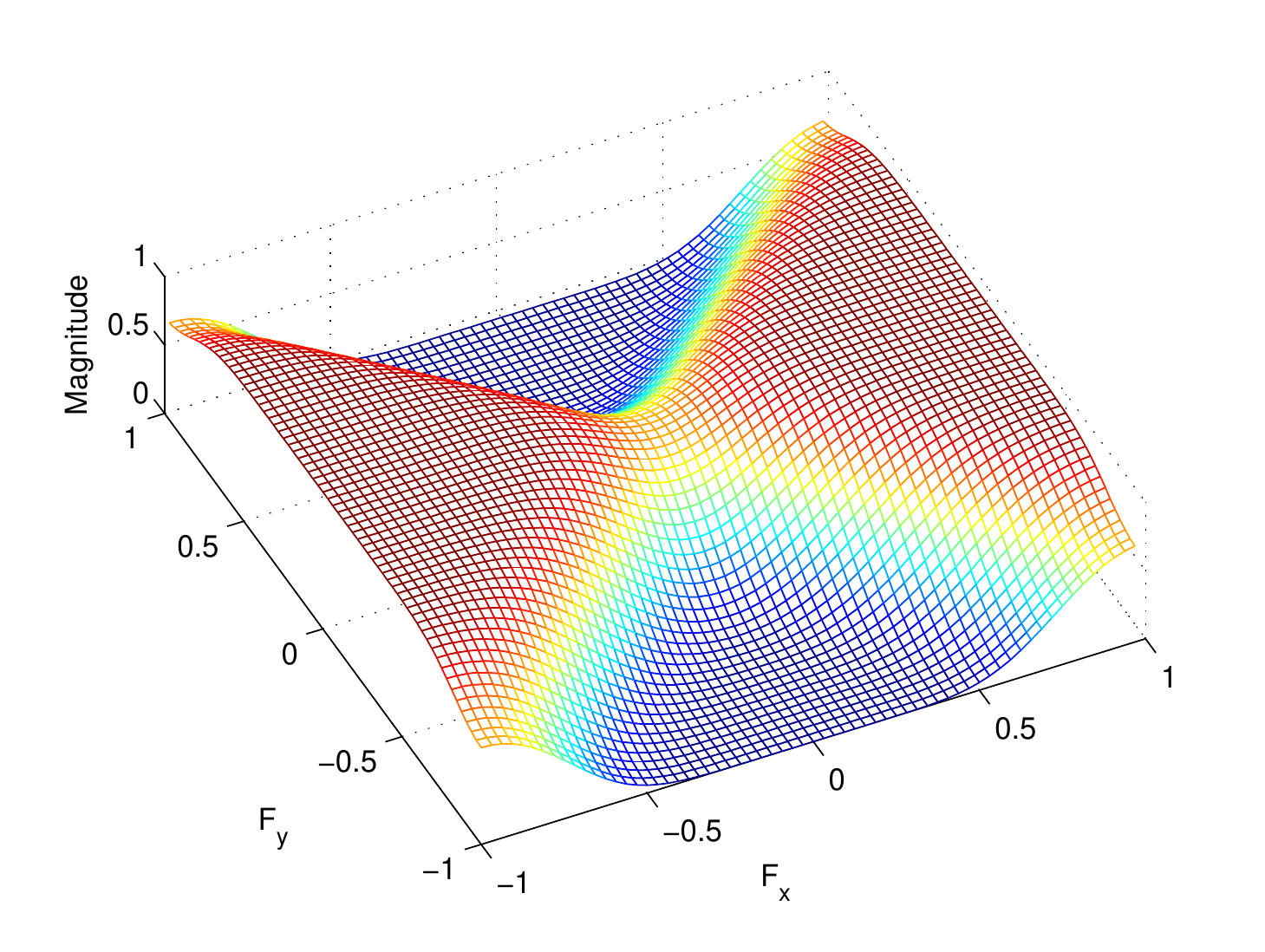}
\includegraphics[width=1.5in]{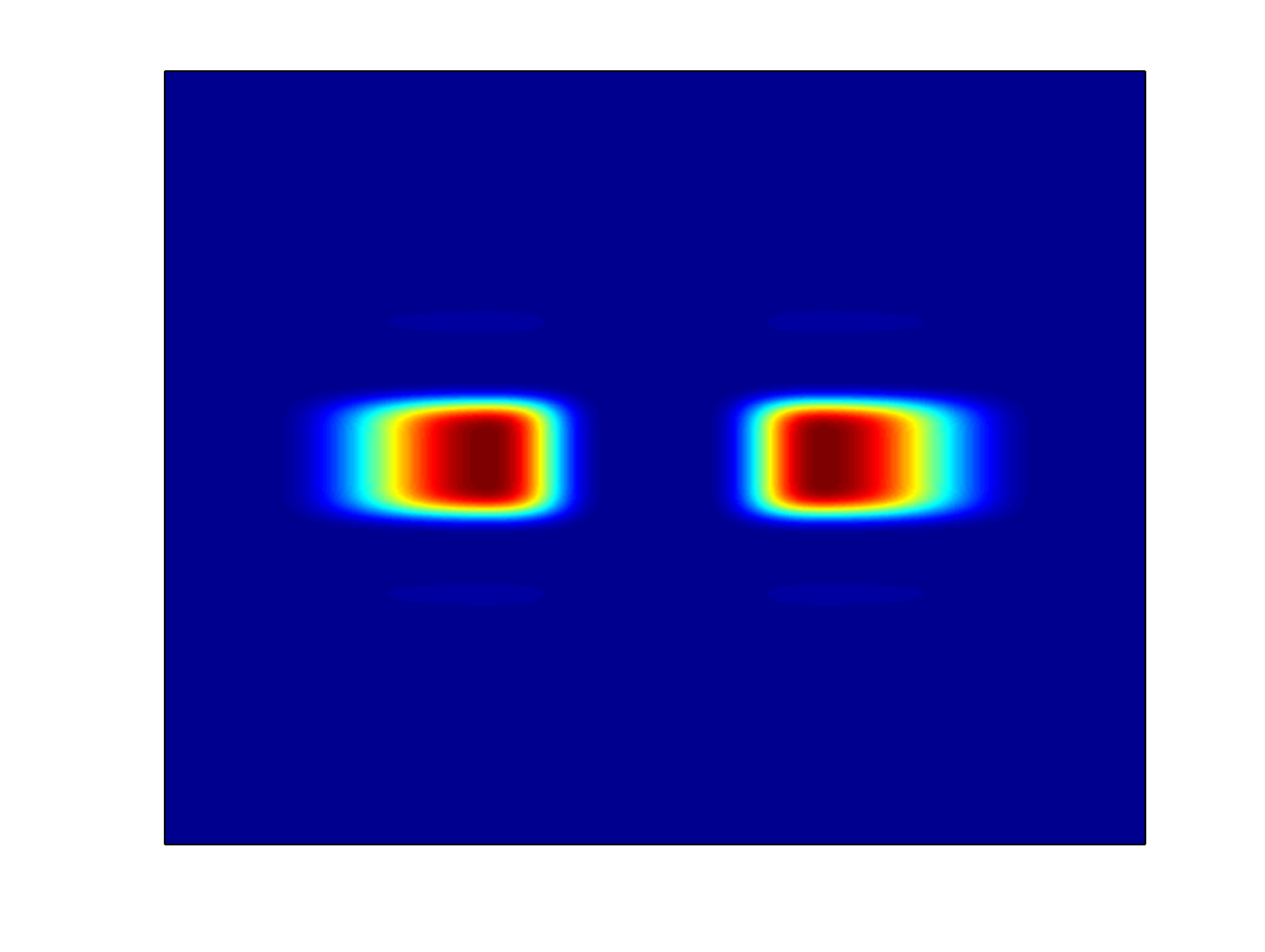}
\includegraphics[width=1.5in]{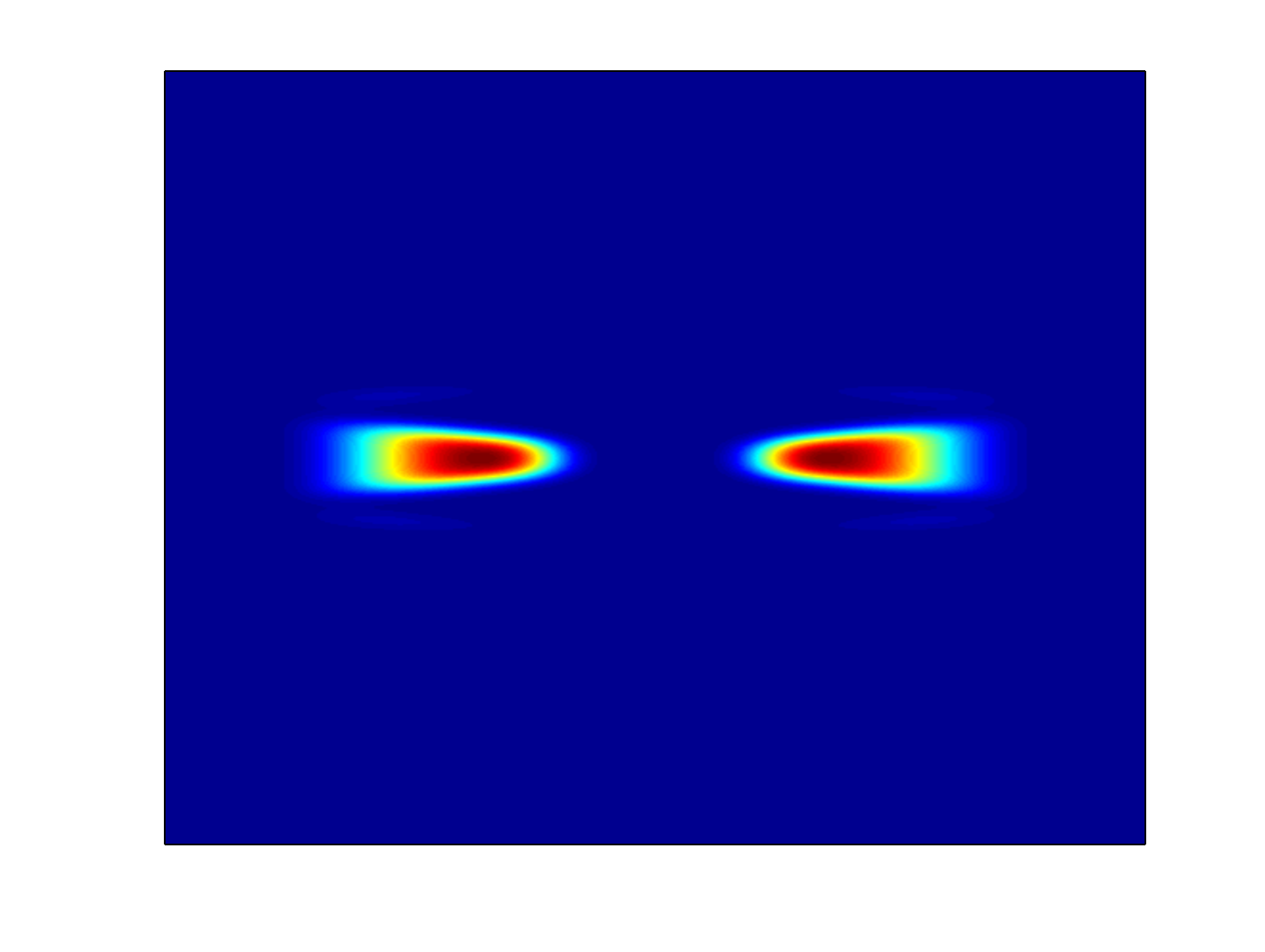}
\put(-280,0){(a)} \put(-170,0){(b)} \put(-50,0){(c)}
\end{center}
\caption{(a) Magnitude response of 2D fan filter. (b) Non-separable shearlet $\psi^{\text{non}}_{j,k,m}.$ (c) Separable shearlet $\psi_{j,k,m}$.}
\label{fig:nonsupp}
\end{figure}

\subsubsection{Algorithmic Realization}

Next, our aim is to derive a digital formulation of the shearlet coefficients $\langle f,\psi^{\text{non}}_{j,k,m}\rangle$
for a function $f$ as given in \eqref{eq:code2}. We will only discuss the case of shearlet coefficients associated with $A_{2^j}$
and $S_k$; the same procedure can be applied for $\tilde{A}_{2^j}$ and $\tilde{S}_k$ except for switching the order of variables
$x_1$ and $x_2$.

In Subsection \ref{subsec:direction}, we discretized a sheared function $f(S_{2^{-j/2}k}\cdot)$ using the digital shear operator
$S^d_{2^{-j/2}k}$ as defined in \eqref{eq:dshear}. In this implementation, we walk a different path. We digitalize the shearlets
$\psi^{\text{non}}_{j,k,m}(\cdot) = \psi^{\text{non}}_{j,0,m}(S_{2^{-j/2}k}\cdot)$ by combining multiresolution analysis and
digital shear operator $S^d_{2^{-j/2}k}$ to digitize the wavelet $\psi^{\text{non}}_{j,0,m}$ and the shear operator $S_{2^{-j/2}k}$,
respectively. This yields digitized shearlet filters of the form
\[
\psi^{\text{d}}_{j,k}(n) = S^d_{2^{-j/2}k}\Bigl(p_{J-j/2}*w_{j}\Bigr)(n),
\]
where $w_j$ is the 2D separable wavelet filter defined by $w_j(n_1,n_2) = g_{J-j}(n_1)\cdot$ $h_{J-j/2}(n_2)$ and $p_{J-j/2}(n)$ are
the Fourier coefficients of the 2D fan filter given by
{$P(2^{J-j}\xi_1,2^{J-j/2+1}\xi_2)$.}
The DNST associated with the non-separable shearlet generators $\psi^{\text{non}}_j$
is then given by
\[
DNST_{j,k}(f_J)(n) = (f_J*\overline{\psi}^{\text{d}}_{j,k})(2^{J-j}c^j_1n_1,2^{J-j/2}c^j_2n_2), \quad \text{for}\,\, f_J \in \ell^2(\Z^2).
\]
We remark that the discrete shearlet filters $\psi^{\text{d}}_{j,k}$ are computed by using a similar ideas as in Subsection
\ref{subsubsec:digitizationOfCSST}. As before, those filter coefficients can be precomputed to avoid additional computational effort.

Further notice that by setting $c^j_1 = 2^{j-J}$ and $c^j_2 = 2^{j/2-J}$, the DNST simply becomes a 2D convolution. Thus,
in this case, DNST is shear invariant.

\subsubsection{Inverse DNST}

In case that $c^j_1 = 2^{j-J}$ and $c^j_2 = 2^{j/2-J}$, the dual shearlet filters $\tilde{\psi}^{\text{d}}_{j,k}$ can be easily computed
by,
\wl{
\[
\hat{\tilde{\psi}}^{\text{d}}_{j,k}(\xi) = \frac{\hat{\psi}^{\text{d}}_{j,k}(\xi)}{\sum_{j,k} |\hat{\tilde{\psi}}^{\text{d}}_{j,k}(\xi)|^2}
\]
}
and we obtain the reconstruction formula
\[
f_J = \sum_{j,k} (f_J*\overline{\psi}^{\text{d}}_{j,k})*\tilde{\psi}^{\text{d}}_{j,k}.
\]
{This reconstruction is stable due to the existence of positive upper and lower bounds of $\sum_{j,k} |\hat{\tilde{\psi}}^{\text{d}}_{j,k}(\xi)|^2$,
which is implied by the frame property of nonseparable shearlets $\psi^{\text{non}}_{j,k,m}$, see \cite{Lim2011}.}
Thus, no iterative methods are required for the inverse DNST.
The frequency response of a discrete shearlet filter $\psi^{\text{d}}_{j,k}$ and its dual $\tilde{\psi}^{\text{d}}_{j,k}$ is illustrated
in Fig. \ref{fig:dual}. We observe that primal and dual shearlet filters behave similarly in the sense that both of filters are very well
localized in frequency.
\begin{figure}[h]
\begin{center}
\includegraphics[width=1.5in]{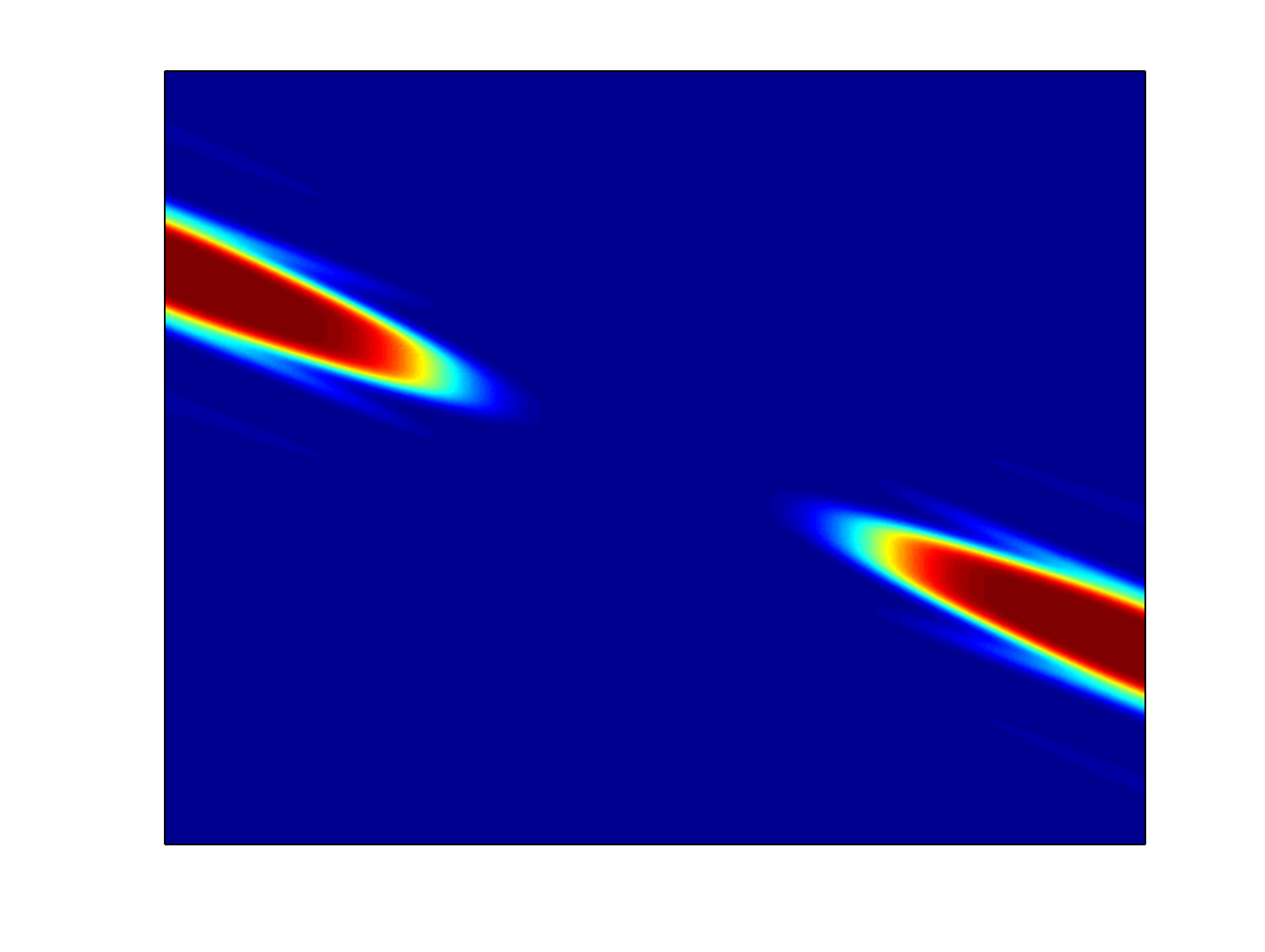}
\includegraphics[width=1.5in]{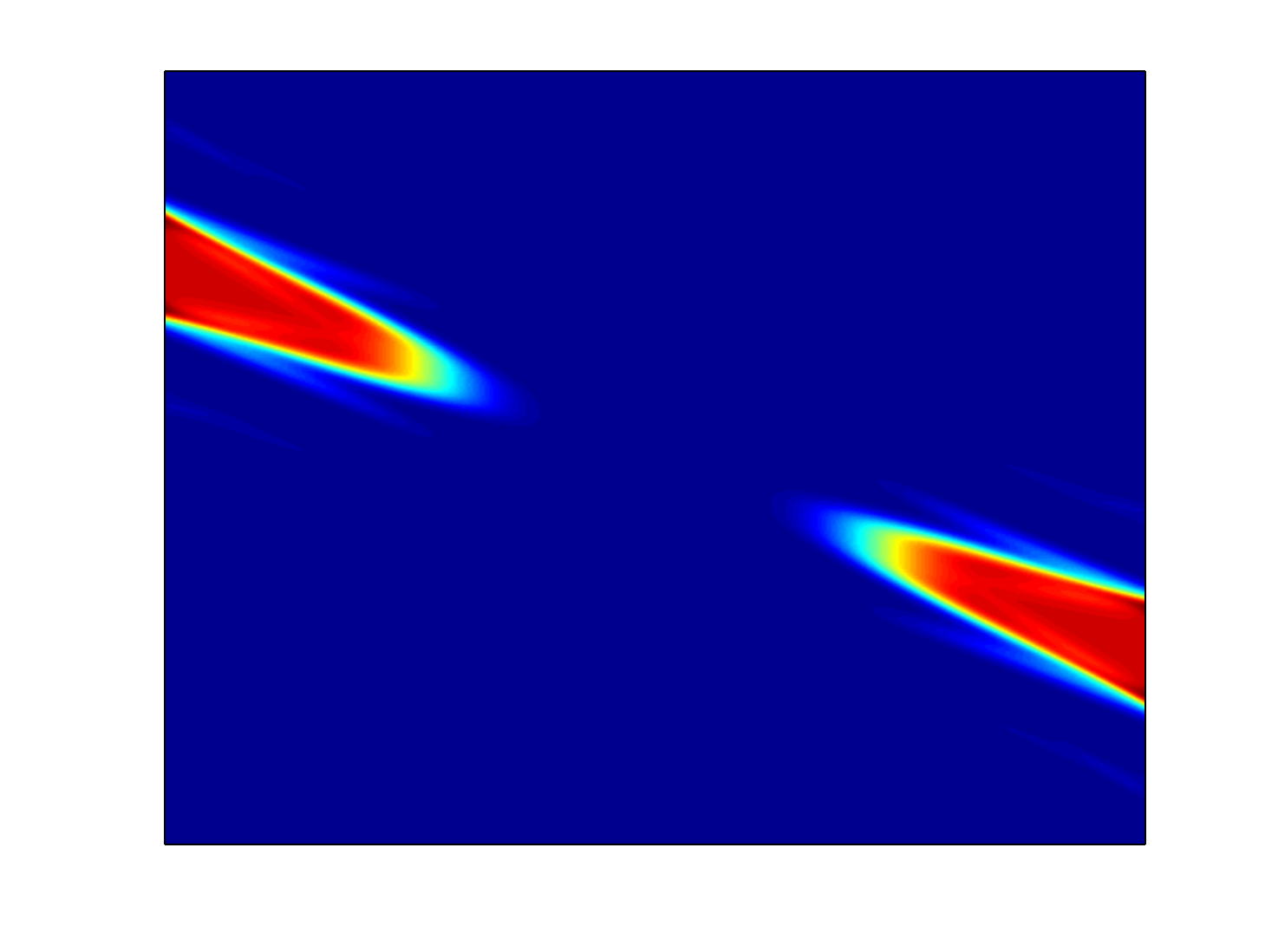}
\end{center}
\caption{Magnitude response of shearlet filter $\psi^{\text{d}}_{j,k}$ and its dual filter $\tilde{\psi}^{\text{d}}_{j,k}$.}
\label{fig:dual}
\end{figure}

\section{Framework for Quantifying Performance}
\label{sec:framework}

We next present the framework for quantifying performance of implementations of directional transforms, which
was originally introduced in \cite{KSZ11,DKSZ11}. This set of test measures was designed to analyze particular well-understood
properties of a given algorithm, which in this case are the desiderata proposed at the beginning of this chapter. This framework
was moreover introduced to serve as a tool for tuning the parameters of an algorithm in a rational way and as an
objective common ground for comparison of different algorithms. The performance of the three algorithms FDST, DSST, and DNST
will then be tested with respect to those measures. This will give us insight into their behavior with respect to the analyzed
characteristics, and also allow a comparison. However, the test values of these three algorithms will also show the delicateness
of designing such a testing framework in a fair manner, since due to the complexity of algorithmic realizations it is highly
difficile to do each aspect of an algorithm justice. It should though be emphasized that -- apart from being able to
rationally tune parameters -- such a framework of quantifying performance is essential for an objective judgement of
algorithms.
The codes of all measures are available in \url{ShearLab}.

In the following, $S$ shall denote the transform under consideration, $S^\star$ its adjoint, and, if iterative reconstruction is tested, $G_{A}J$
shall stand for the solution of the matrix problem $AI=J$ using the conjugate gradient method with residual error set to be
$10^{-6}$. Some measures apply specifically to transforms utilizing the pseudo-polar grid, for which purpose we introduce the
notation $P$ for the pseudo-polar Fourier transform, $w$ shall denote the weighting applied to the values on the pseudo-polar grid,
and $W$ shall be the windowing with additional 2D iFFT.

\subsection{Algebraic Exactness}

We require the transform to be the precise implementation of a theory for digital data on a pseudo-polar grid. In addition,
to ensure numerical accuracy, we provide the following measure. This measure is designed for transforms utilizing the pseudo-polar
grid.

\begin{measure}
Generate a sequence of $5$ (of course, one can choose any reasonable integer other than 5) random images $I_1,\ldots,I_{5}$ on a pseudo-polar
grid for $N=512$ and $R=8$ with standard normally distributed entries. Our quality measure will then
be the Monte Carlo estimate for the operator norm $\|W^\star W -  {\Id} \|_{op}$  given by
\[
M_{alg} = \max_{i=1,\ldots,5} \frac{\|W^\star W I_i - I_i \|_{2}}{\|I_i\|_2}.
\]
\end{measure}

This measure applies to the FDST -- not to the DSST or DNST -- , for which we obtain
\[
M_{alg} = 6.6E-16.
\]
This confirms that the windowing in the FDST is indeed up to machine precision a Parseval frame, which was
already theoretically confirmed by Theorem \ref{thm:DSHtight}.

\subsection{Isometry of Pseudo-Polar Transform}
\label{subsec:isometry}

We next test the pseudo-polar transform itself which might be used in the algorithm under consideration.
For this, we will provide three different measures, each being designed to test a different aspect.

\begin{measure}
\bitem
\item {\rm Closeness to isometry}. Generate a sequence of $5$ random images $I_1,\ldots,I_{5}$
of size $512 \times 512$ with standard uniformly distributed entries. Our quality measure will then
be the Monte Carlo estimate for the operator norm $\|P^\star w P -  {\Id} \|_{op}$  given by
\[
M_{isom_1} = \max_{i=1,\ldots,5} \frac{\|P^\star w P I_i - I_i \|_{2}}{\|I_i\|_2}.
\]
\item {\rm Quality of preconditioning}. Our quality measure will be
the spread of the eigenvalues of the Gram operator $P^\star w P$
given by
\[
M_{isom_2} = \frac{\lambda_{\max}(P^\star w P)}{\lambda_{\min}(P^\star w P)}.
\]
\item {\rm Invertibility}.  Our quality measure will be the Monte Carlo estimate for the
invertibility of the operator $\sqrt{w} P$ using conjugate gradient method $G_{\sqrt{w}P}$ (residual error
is set to be $10^{-6}$, here $G_{A}J$ means solving matrix problem $AI=J$ using conjugate gradient method) given by
\[
M_{isom_3} = \max_{i=1,\ldots,5} \frac{\|G_{\sqrt{w}P} \sqrt{w} P I_i - I_i \|_{2}}{\|I_i\|_2}.
\]
\eitem
\end{measure}

This measure applies to the FDST -- not to the DSST or DNST -- , for which we obtain the following numerical results,
see Table \ref{tab:1}.

\begin{table}[ht]
\caption{The numerical results for the test on isometry of the pseudo-polar transform.}
\label{tab:1}
\begin{center}
\begin{tabular}{p{2.7cm}p{2.7cm}p{2.7cm}p{3.1cm}}
\hline\noalign{\smallskip}
& $M_{isom_1}$  &  $M_{isom_2}$  & $M_{isom_3}$\\
\noalign{\smallskip}\hline\noalign{\smallskip}
FDST & 9.3E-4  &  1.834  & 3.3E-7 \\
\noalign{\smallskip}\hline\noalign{\smallskip}
\end{tabular}
\end{center}
\end{table}

The slight isometry deficiency of $M_{isom_1}\approx$ 9.9E-4 mainly results from the isometry deficiency of the weighting.
However, for practical purposes this transform can be still considered to be an isometry allowing the utilization of the
adjoint as inverse transform.

\subsection{Parseval Frame Property}

We now test the overall frame behavior of the system defined by the transform. These measures now apply to more
than pseudo-polar based transforms, in particular, to FDST, DSST, and DNST.

\begin{measure}
Generate a sequence of $5$ random images $I_1,\ldots,I_{5}$
of size $512 \times 512$ with standard uniformly distributed entries. Our quality measure will then
two-fold:
\bitem
\item {\rm Adjoint transform}. The measure will be the Monte Carlo estimate for the operator norm $\|S^\star S -  {\Id} \|_{op}$  given by
\[
M_{tight_1} = \max_{i=1,\ldots,5} \frac{\|S^\star S I_i - I_i \|_{2}}{\|I_i\|_2}.
\]
\item {\rm Iterative reconstruction}. Using conjugate gradient $G_{\sqrt{w}P}$, our measure will be given by
\[
M_{tight_2} = \max_{i=1,\ldots,5} \frac{\|G_{\sqrt{w}P} W^\star S I_i - I_i \|_{2}}{\|I_i\|_2}.
\]
\eitem
\end{measure}

The following table, Table \ref{tab:2}, presents the performance of FDST and DNST with respect to these
quantitative measures.

\begin{table}[ht]
\caption{The numerical results for the test on Parseval property.}
\label{tab:2}
\begin{center}
\begin{tabular}{p{3.5cm}p{3.5cm}p{4cm}}
\hline\noalign{\smallskip}
&  $M_{tight_1}$  &  $M_{tight_2}$ \\
\noalign{\smallskip}\hline\noalign{\smallskip}
FDST & 9.9E-4  &  3.8E-7\\
DSST & 1.9920  & 1.2E-7 \\
DNST & 0.1829  & 5.8E-16 (with dual filters)\\
\noalign{\smallskip}\hline\noalign{\smallskip}
\end{tabular}
\end{center}
\end{table}

The transform FDST is nearly tight as indicated by the measures $M_{\text{tight}_1} = 9.9$E-4, i.e., the chosen
weights force the PPFT to be sufficiently close to an isometry for most practical purposes. If a higher accurate
reconstruction is required, $M_{\text{tight}_2} = 3.8$E-7 indicates that this can be achieved by the CG method.
As expected, $M_{\text{tight}_1} = 1.9920$ shows that DSST is not tight. Nevertheless, the CG method provides
with $M_{\text{tight}_2} = 1.2$E-7 a highly accurate approximation of its inverse.
DNST is much closer to
being tight than DSST (see $M_{\text{tight}_1} = 0.1829$). We remark that this transform -- as discussed -- does
not require the CG method for reconstruction. The value $5.8$-16 was derived by using the dual shearlet filters,
which show superior behavior.

\subsection{Space-Frequency-Localization}

The next measure is designed to test the degree to which the analyzing elements, here phrased in terms of
shearlets but can be extended to other analyzing elements, are space-frequency localized.

\begin{measure}
Let $I$ be a shearlet in a $512 \times 512$ image centered at the origin
$(257,257)$ with slope $0$ of scale $4$, i.e., $\sigma_{4,0,0}^{11}+\sigma_{4,0,0}^{12}$. Our quality measure will be four-fold:
\bitem
\item {\rm Decay in spatial domain}. We compute the decay rates $d_1,\ldots,d_{512}$
along lines parallel to the $y$-axis starting from the line $[257,\;:\;]$ and the decay rates
$d_{512}$, $\ldots$, $d_{1024}$ with $x$ and $y$ interchanged. By decay rate, for instance, for the
line $[257:512,1]$, we first compute the smallest monotone majorant $M(x,1)$, $x=257,\ldots,512$
-- note that we could also choose an average amplitude here or a different `envelope' --
for the curve $|I(x,1)|$, $x=257,\ldots,512$. Then the decay rate is defined to be the average slope
of the line, which is a least square fit to the curve $\log(M(x,1))$, $x=257,\ldots,512$.
Based on these decay rates, we choose our measure to be the average of the decay rates
\[
M_{decay_1} = \frac{1}{1024}\sum_{i=1,\ldots,1024} d_i.
\]
\item {\rm Decay in frequency domain}. Here we intend to check whether the Fourier transform of $I$
is compactly supported and also the decay. For this, let $\hat{I}$ be the 2D-FFT of $I$ and compute the
decay rates $d_i$, $i=1,\ldots,1024$ as before. Then we define the following two measures:
\bitem
\item {\rm Compactly supportedness}.
\[
M_{supp} = \frac{\max_{|u|,|v|\le 3}|\hat I(u,v)|}{\max_{u,v}|\hat I(u,v)|}.
\]
\item {\rm Decay rate}.
\[
M_{decay_2} = \frac{1}{1024}\sum_{i=1,\ldots,512} d_i.
\]
\eitem
\item {\rm Smoothness in spatial domain}. We will measure smoothness by the average of local H\"older regularity.
For each $(u_0,v_0)$, we compute $M(u,v)= |I(u,v)-I(u_0,v_0)|$, $0<\max\{|u-u_0|,|v-v_0|\}\le 4$. Then the
local H\"older regularity $\alpha_{u_0,v_0}$ is the least square fit to the curve $\log(|M(u,v)|)$.
Then our smoothness measure is given by
\[
M_{smooth_1} = \frac{1}{512^2}\sum_{u,v}\alpha_{u,v}.
\]
\item {\rm Smoothness in frequency domain}. We compute the smoothness now for $\hat{I}$, the 2D-FFT
of $I$ to obtain the new $\alpha_{u,v}$ and define our measure to be
\[
M_{smooth_2} = \frac{1}{512^2}\sum_{u,v}\alpha_{u,v}.
\]
\eitem
\end{measure}

Let us now analyze the space-frequency localization of the shearlets utilized in FDST, DSST and DNST by these
measures. The numerical results are presented in Table \ref{tab:3}.

\begin{table}[ht]
\caption{The numerical results for the test on space-frequency localization.}
\label{tab:3}
\begin{center}
\begin{tabular}{p{1.8cm}p{1.8cm}p{1.8cm}p{1.8cm}p{1.8cm}p{2.0cm}}
\hline\noalign{\smallskip}
&  $M_{decay_1}$  & $M_{supp}$ &  $M_{decay_2}$ &  $M_{smooth_1}$ & $M_{smooth_2}$\\ \noalign{\smallskip}\hline\noalign{\smallskip}
FDST & -1.920 & 5.5E-5 & -3.257 & 1.319 &  0.734 \\
DSST & $-\infty$ & 8.6E-3 & -1.195 & 0.012 & 0.954 \\
DNST & $-\infty$  & 2.0E-3 & -0.716 & 0.188 & 0.949\\
\noalign{\smallskip}\hline\noalign{\smallskip}
\end{tabular}
\end{center}
\end{table}

The shearlet elements associated with FDST are band-limited and those associated with DSST and DNST are compactly supported,
which is clearly indicated by the values derived for $M_{decay_1}$, $M_{supp}$, and $M_{decay_2}$. It would be expected that
$M_{decay_2} = - \infty$ for FDST due to the band-limitedness of the associated shearlets. The shearlet elements are however
defined by their Fourier transform on a pseudo-polar grid, whereas the measure $M_{decay_2}$ is taken after applying the 2D-FFT to the shearlets
resulting in data on a cartesian grid, in particular, yielding a non-precisely compactly supported function.

The test values for
$M_{smooth_1}$ and $M_{smooth_2}$ show that the associated shearlets are more smooth in spatial domain for FDST than
for DSST and DNST, with the reversed situation in frequency domain.

\subsection{Shear Invariance}

Shearing naturally occurs in digital imaging, and it can -- in contrast to rotation -- be precisely realized in the digital domain.
Moreover, for the shearlet transform, shear invariance can be proven and the theory implies
\[
\ip{2^{3j/2} \psi(S_k^{-1} A_4^{j} \cdot -m)}{f(S_s \cdot)} = \ip{2^{3j/2} \psi(S_{k+2^{j} s}^{-1} A_4^{j} \cdot -m)}{f}.
\]
We therefore expect to see this or a to the specific directional transform adapted behavior. The degree to which this goal is
reached is tested by the following measure.

\begin{measure}
Let $I$ be an $256 \times 256$ image with an edge through the origin $(129,129)$ of slope $0$.  Given $-1\le s\le 1$, generates an
image $I_s:=I(S_s\cdot)$ and let $S_j$ be the set of all possible scales $j$ such that $2^js\in\bZ$. Our quality measure will then
be the curve
\[
M_{shear,j} = \max_{-2^j<k,k+2^js<2^j} \frac{\|C_{j,k}(S I_s)  - C_{j,k+2^js}( S I)\|_2}{\|I\|_2}, \qquad \text{scale } j\in S_j,
\]
where $C_{j,k}$ is the shearlet coefficients at scale $j$ and shear $k$.
\end{measure}


We present our results in Table \ref{tab:4}.

\begin{table}[ht]
\caption{The numerical results for the test on shear invariance.}
\label{tab:4}
\begin{center}
\begin{tabular}{p{2.2cm}p{2.2cm}p{2.2cm}p{2.2cm}p{2.2cm}}
\hline\noalign{\smallskip}
&  $M_{shear,1}$ & $M_{shear,2}$ &$M_{shear,3}$ &$M_{shear,4}$  \\
\noalign{\smallskip}\hline\noalign{\smallskip}
FDST & 1.6E-5 & 1.8E-4 & 0.002  & 0.003 \\
\noalign{\smallskip}\hline\noalign{\smallskip}
\end{tabular}
\end{center}
\end{table}

This table shows that the FDST is indeed almost shear invariant. A closer inspection shows that $M_{shear,1}$ and $M_{shear,2}$ are
relatively small compared to the measurements with respect to finer scales $M_{shear,3}$ and $M_{shear,4}$. The reason for this is
the aliasing effect which shifts some energy to the high frequency part near the boundary away from the edge in the frequency domain.

We did not test DSST and DNST with respect to this measure, since these transforms show a different -- not included in this Measure 5
-- type of shear invariance behavior.

\subsection{Speed}
\label{subsec:speed}

Speed is one of the most fundamental properties of each algorithm to analyze. Here, we test the speed up to a size of $N=512$
which regard as sufficient to computing the complexity.

\begin{measure}
Generate a sequence of $5$ random images $I_i$, $i=5,\ldots,9$ of size $2^i \times 2^i$
with standard normally distributed entries. Let $s_i$ be the speed of the shearlet transform $S$ applied to
$I_i$.
Our hypothesis is that the speed behaves like $s_i = c \cdot (2^{2i})^d$; $2^{2i}$ being the
size of the input. Let now $\tilde{d}_a$ be the average slope of the line,
which is a least square fit to the curve $i \mapsto \log(s_i)$.
Let also $f_i$ be the 2fft applied to $I_i$, $i=5,\ldots,9$. Our quality measure will then be three-fold:
\bitem
\item {\rm Complexity}.
\[
M_{speed_1} = \frac{\tilde{d}_a}{2\log 2}.
\]
\item {\rm The Constant}.
\[
M_{speed_2} = \frac15 \sum_{i=5}^{9} \frac{s_i}{(2^{2i})^{M_{speed,1}}}.
\]
\item {\rm Comparison with 2D-FFT}.
\[
M_{speed_3} = \frac15 \sum_{i=5}^{9} \frac{s_i}{f_i}.
\]
\eitem
\end{measure}

Table \ref{tab:6} presents the results of testing FDST, DSST and DNST with respect to these speed measures.

\begin{table}[ht]
\caption{The numerical results for the test on speed.}
\label{tab:6}
\begin{center}
\begin{tabular}{p{3cm}p{3cm}p{3cm}p{2.2cm}}
\hline\noalign{\smallskip}
&  $M_{speed_1}$ & $M_{speed_2}$ & $M_{speed_3}$ \\
\noalign{\smallskip}\hline\noalign{\smallskip}
FDST & 1.156 & 9.3E-6 & 280.560 \\
DSST & 0.821 & 4.5E-3 & 88.700 \\
DNST & 1.081 & 9.9E-8 & 40.519\\
\noalign{\smallskip}\hline\noalign{\smallskip}
\end{tabular}
\end{center}
\end{table}

To interpret these results correctly, we remark that the DNST was tested only with test images $I_i$ for $i=7,\dots,9$, since it
can not be implemented for small size images. Interestingly, the results also show that the 2D FFT based convolution makes DNST
comparable to DSST with respect to these speed measures, although it is much more redundant than DSST. Finally, the results
show that FDST is comparable with both DSST and DNST with respect to complexity measure $M_{\text{speed}_1}$. From this, it is
conceivable to assume that FDST is highly comparable with respect to speed for large scale computations. The larger
value $M_{speed_3} = 280.560$ appears due to the fact that the FDST employs fractional Fourier transforms
on an oversampled pseudo-polar grid of size.

\subsection{Geometric Exactness}

One major advantage of directional transforms is their sensitivity with respect to geometric features alongside with
their ability to sparsely approximate those (cf. Chapter \cite{SparseApproximation}). This measure is designed to
analyze this property.

\begin{measure}
Let $I_1,\ldots, I_8$ be $256 \times 256$ images of an edge through the origin $(129,129)$ and of slope $[-1,-0.5,0,0.5,1]$
and the transpose of the middle three, and let $c_{i,j}$ be the associated shearlet coefficients
for image $I_i$ and scale $j$. Our quality measure will two-fold:
\bitem
\item {\rm Decay of significant coefficients}.
Consider the curve
\[
\frac18 \sum_{i=1}^8 \max{|c_{i,j} \text{(of analyzing elements aligned with the line)}|}, \qquad \text{scale } j,
\]
let $d$ be the average slope of the line, which is a least square fit to $\log$ of this curve, and define
\[
M_{geo_1} = d.
\]
\item {\rm Decay of insignificant coefficients}.
Consider the curve
\[
\frac18 \sum_{i=1}^8 \max{|c_{i,j} \text{(of all other analyzing elements)}|}, \qquad \text{scale } j,
\]
let $d$ be the average slope of the line, which is a least square fit to $\log$ of this curve, and define
\[
M_{geo_2} = d.
\]
\eitem
\end{measure}

Table \ref{tab:5} shows the numerical test results for FDST, DSST, and  DNST.

\begin{table}[ht]
\caption{The numerical results for the test on geometric exactness.}
\label{tab:5}
\begin{center}
\begin{tabular}{p{3.5cm}p{3.5cm}p{4.0cm}}
\hline\noalign{\smallskip}
&  $M_{geo_1}$ & $M_{geo_2}$ \\
\noalign{\smallskip}\hline\noalign{\smallskip}
FDST & -1.358 & -2.032 \\
DSST & -0.002 & -0.030\\
DNST & -0.019 & -0.342\\
\noalign{\smallskip}\hline\noalign{\smallskip}
\end{tabular}
\end{center}
\end{table}

As expected, the decay rate of the insignificant shearlet coefficients of FDST, i.e.,
the ones not aligned with the line singularity, measured by $M_{geo_2}\approx$ -2.032 is much larger than the decay rate of the significant
shearlet coefficients measured by $M_{geo_1}\approx$ -1.358. Notice that this difference is even more significant in the case of
the DSST and DNST.

\subsection{{Stability}}

To analyze {stability} of an algorithm, we choose thresholding as the most common impact on a sequence of transform coefficients.

\begin{measure}
Let $I$ be the regular sampling of a Gaussian function with mean 0 and variance $256$
on $\{-128,127\}^2$ generating an $256 \times 256$-image.
\bitem
\item {\rm Thresholding 1}. Our first quality measure will be the curve
\[
M_{thres_{1,p_1}} = \frac{\|G_{\sqrt{w}P} W^\star \; {\rm thres}_{1,p_1} \, S I  - I\|_2}{\|I\|_2},
\]
where ${\rm thres}_{1,p_1}$ discards $100 \cdot (1-2^{-p_1})$ percent of the coefficients ($p_1 = [2:2:10]$).
\item {\rm Thresholding 2}. Our second quality measure will be the curve
\[
M_{thres_{2,p_2}} = \frac{\|G_{\sqrt{w}P} W^\star \; {\rm thres}_{2,p_2} \, S I  - I\|_2}{\|I\|_2},
\]
where ${\rm thres}_{2,p_2}$ sets all those coefficients to zero with absolute values below
the threshold $m(1-2^{-p_2})$ with $m$ being the maximal absolute value of all coefficients. ($p_2 = [0.001:0.01:0.041]$)
\eitem
\end{measure}

Table \ref{tab:7} shows that even if we discard $100(1-2^{-10}) \sim 99.9\%$ of the FDST coefficients, the original image
is still well approximated by the reconstructed image. Thus the number of the significant coefficients is relatively small
compared to the total number of shearlet coefficients. From Table \ref{tab:8}, we note that knowledge of the shearlet
coefficients with absolute value greater than $m(1-1/2^{0.001}) (\sim 0.1\%$ of coefficients) is sufficient for precise
reconstruction.

DNST shows a similar behavior with worse values for relatively large $p_1$. It should be however emphasized that firstly,
the redundancy of DNST used in this test is 25 and this is lower than the redundancy of FDST, which is about 71. This effect
can be more strongly seen by the test results of DSST whose redundancy with 4 even much smaller. Secondly, a significant
part of the low frequency coefficients in both DSST and DNST will be removed by a relatively large threshold, since the
ratio between the number of the low frequency coefficients and the total number of coefficients is much higher than FDST.
This prohibits a similarly good reconstruction of a Gaussian function.


This test in particular shows the delicateness of comparing different algorithms by merely looking at the test values without
a rational interpretation; in this case, without considering the redundancy and the ratio between the number of the low frequency
coefficients and the total number of coefficients.

\begin{table}[ht]
\caption{The numerical results for $M_{thres_{1,p_1}}$.}
\label{tab:7}
\begin{center}
\begin{tabular}{p{1.8cm}p{1.8cm}p{1.8cm}p{1.8cm}p{1.8cm}p{2.0cm}}
\hline\noalign{\smallskip}
$p_1$ &  2 & 4 & 6 & 8 & 10 \\
\noalign{\smallskip}\hline\noalign{\smallskip}
FDST &1.5E-08&7.2E-08&2.5E-05 & 0.001  & 0.007 \\
DSST & 0.02961 & 0.02961 & 0.02961 & 0.0296 & 0.0331\\
DNST & 5.2E-10 & 1.2E-04 & 0.00391 & 0.0124 & 0.0396\\
DNST+DWT &  1.2E-09 & 3.2E-06 & 3.4E-05 & 5.5E-05 & 5.5E-05\\
\noalign{\smallskip}\hline\noalign{\smallskip}
\end{tabular}
\end{center}
\end{table}

\begin{table}[ht]
\caption{The numerical results for $M_{thres_{2,p_2}}$.}
\label{tab:8}
\begin{center}
\begin{tabular}{{p{1.8cm}p{1.8cm}p{1.8cm}p{1.8cm}p{1.8cm}p{2.0cm}}}
\hline\noalign{\smallskip}
$p_2$ &  0.001 & 0.011 & 0.021 & 0.031 & 0.041 \\ \noalign{\smallskip}\hline\noalign{\smallskip}
FDST & 0.005 & 0.039 & 0.078 & 0.113  & 0.154\\
DSST & 0.030 & 0.036 & 0.046 & 0.056 & 0.072\\
DNST & 0.002 & 0.018 & 0.035 & 0.055 & 0.076\\
DNST+DWT & 0.001 & 0.013 & 0.020 & 0.024 & 0.031\\
\noalign{\smallskip}\hline\noalign{\smallskip}
\end{tabular}
\end{center}
\end{table}




\begin{thebibliography}{99}

\bibitem{Introduction}
Introduction

\bibitem{Applications}
Chapter on Applications.

\bibitem{ShearletMRA}
Chapter on ShearletMRA.

\bibitem{SparseApproximation}
Chapter on Sparse Approximation.

\bibitem{ACDIS08}
A. Averbuch, R. R. Coifman, D. L. Donoho, M. Israeli, and Y. Shkolnisky,
{\em A framework for discrete integral transformations I -- the pseudo-polar Fourier transform},
SIAM J. Sci. Comput. {\bf 30} (2008), 764--784.

\bibitem{Bailey:frFT}
D. H. Bailey and P. N. Swarztrauber, {\em The fractional Fourier transform and applications}, SIAM Review, {\bf 33} (1991), 389--404.

\bibitem{CDDY06}
E. J. Cand\`{e}s, L. Demanet, D. L. Donoho and L. Ying,
{\em  Fast discrete curvelet transforms,}
Multiscale Model. Simul. {\bf 5} (2006), 861--899.

\bibitem{CD99}
E. J. Cand\`{e}s and D. L. Donoho,
{\em Ridgelets: a key to higher-dimensional intermittency?,}
Phil. Trans. R. Soc. Lond. A. {\bf 357} (1999), 2495--2509.

\bibitem{CD04}
E. J.~Cand\`es and D. L.~Donoho,
{\em New tight frames of curvelets and optimal representations of objects with $C^2$ singularities},
Comm. Pure Appl. Math. {\bf 56} (2004), 219--266.

\bibitem{CD05a}
E. J.~Cand\`es and D. L.~Donoho,
{\em Continuous curvelet transform: I. Resolution of the wavefront set},
Appl. Comput. Harmon. Anal. {\bf 19} (2005), 162--197.

\bibitem{CD05b}
E. J.~Cand\`es and D. L.~Donoho,
{\em Continuous curvelet transform: II. Discretization of frames},
Appl. Comput. Harmon. Anal. {\bf 19} (2005), 198--222.

\bibitem{DV05}
M. N. Do and M. Vetterli,
{\em The contourlet transform: an efficient directional multiresolution image representation},
IEEE Trans. Image Process. {\bf 14} (2005), 2091--2106.

%

\bibitem{Don99}
D. L. Donoho,
{\em Wedgelets: nearly minimax estimation of edges},
Ann. Statist.  {\bf 27}  (1999), 859--897.

\bibitem{DKSZ11}
D. L. Donoho, G. Kutyniok, M. Shahram, and X. Zhuang,
{\em A rational design of a digital shearlet transform},
Proceeding of the 9th International Conference on Sampling Theory and Applications, Singapore, 2011.


\bibitem{DMSSU08}
D. L. Donoho, A. Maleki, M. Shahram, V. Stodden, and I. Ur-Rahman,
{\em Fifteen years of Reproducible Research in Computational Harmonic Analysis,}
preprint.


\bibitem{ELL08}
G. Easley, D. Labate, and W.-Q Lim, {\em Sparse directional image representations using the discrete shearlet transform}, Appl. Comput. Harmon. Anal.
{\bf 25} (2008), 25--46.


%
%
%
%
%
%
%
%
%
%
\bibitem{HKZ10}
B. Han, G. Kutyniok, and Z. Shen,
{\em A unitary extension principle for Shearlet Systems},
preprint.

\bibitem{HR63} E. Hewitt and K.A. Ross, {\em Abstract Harmonic Analysis I, II},
Springer-Verlag, Berlin/ Heidelberg/New York, 1963.

%

\bibitem{KKL2010}
P. Kittipoom, G. Kutyniok, and W.-Q Lim,{\em  Construction of Compactly Supported Shearlet Frames}. J. Fourier Anal. Appl., 2010, to appear.



\bibitem{KSZ11}
G. Kutyniok, M. Shahram, and X. Zhuang,
{\em ShearLab: A Rational Design of a Digital Parabolic Scaling Algorithm},
preprint.

\bibitem{KS08}
G. Kutyniok and T. Sauer,
{\em Adaptive directional subdivision schemes and Shearlet Multiresolution Analysis},
preprint.

\bibitem{Lim2010}
W.-Q Lim, {\em The Discrete Shearlet Transform: A new directional transform and compactly supported shearlet frames}, IEEE Trans. Imag. Proc. {\bf 19} (2010), 1166--1180.

\bibitem{Lim2011} {W.-Q Lim, {\em Nonseparable Shearlet Transforms}, preprint.}


\bibitem{M099} S. Mallat, {\em A Wavelet Tour of Signal Processing}, 2nd ed. New
York: Academic, 1999.




\end{thebibliography}
\end{document}